%
%
%
\documentclass{conm-p-l}

\usepackage{biblatex}
\addbibresource{bibliography.bib}
\usepackage{etoolbox}

\usepackage{tikz}
\usetikzlibrary{calc}
\usetikzlibrary{cd}
\usetikzlibrary{decorations.markings}
\usetikzlibrary{decorations.text} 
\usetikzlibrary{decorations.pathreplacing}
\usetikzlibrary{decorations.pathmorphing}
\usetikzlibrary{arrows.meta}

\usepackage{longtable}
\usepackage{subcaption}
\usepackage{amssymb}
\usepackage{amsmath}
\usepackage{bbm}
\usepackage{yhmath} 
\usepackage{uniinput}
\usepackage{amsthm}
\usepackage{stmaryrd}
\usepackage{mathtools}
\usepackage{fouridx}
\usepackage{xparse}
\usepackage{bm}
\usepackage{aliascnt}
\usepackage{hyperref}
\usepackage{sansmath}
\usepackage[bbgreekl]{mathbbol}

\newcommand{\<}{\langle}
\renewcommand{\>}{\rangle}
\newcommand{\C}{\mathbb{C}}
\newcommand{\N}{\mathbb{N}}
\newcommand{\cT}{\mathcal{T}}

\newcommand{\Id}{\operatorname{Id}}
\newcommand{\id}{\mathbbm{1}}

\newcommand{\Hom}{\operatorname{Hom}}
\newcommand{\Ker}{\operatorname{Ker}}

\let\emph\textbf

\newcommand{\htensor}{\widehat{\otimes}}

\newcommand{\MC}{\operatorname{\mathrm{MC}}}

\newcommand{\Gtl}{\operatorname{Gtl}}

\def\H{\operatorname{H}}

\newcommand{\HH}{\operatorname{HH}}
\newcommand{\HC}{\operatorname{CC}}

\newcommand{\odd}{\operatorname{odd}}
\newcommand{\even}{\operatorname{even}}

\newcommand{\matf}[4]{MATF} 

\renewcommand{\k}{\mathbbm{k}}

\theoremstyle{definition}
\newtheorem{theorem}{Theorem}

\newaliascnt{remark}{theorem}
\newaliascnt{definition}{theorem}
\newaliascnt{proposition}{theorem}
\newaliascnt{lemma}{theorem}
\newaliascnt{corollary}{theorem}
\newaliascnt{example}{theorem}
\newaliascnt{convention}{theorem}
\newaliascnt{todo}{theorem}

\newtheorem{remark}[remark]{Remark}
\newtheorem{definition}[definition]{Definition}
\newtheorem{proposition}[proposition]{Proposition}
\newtheorem{lemma}[lemma]{Lemma}
\newtheorem{corollary}[corollary]{Corollary}
\newtheorem{example}[example]{Example}

\aliascntresetthe{remark}
\aliascntresetthe{definition}
\aliascntresetthe{proposition}
\aliascntresetthe{lemma}
\aliascntresetthe{corollary}
\aliascntresetthe{example}
\aliascntresetthe{convention}
\aliascntresetthe{todo}

\numberwithin{equation}{section}
\numberwithin{figure}{section}
\numberwithin{table}{section}

\makeatletter\let\c@table\c@figure\makeatother

\numberwithin{theorem}{section}
\numberwithin{remark}{section}
\numberwithin{definition}{section}
\numberwithin{proposition}{section}
\numberwithin{lemma}{section}
\numberwithin{corollary}{section}
\numberwithin{example}{section}
\numberwithin{convention}{section}
\numberwithin{todo}{section}







\begin{document}

\title{Deformations of Gentle $A_\infty$-Algebras}

\author{Raf Bocklandt}
\address{KdV Institute, University of Amsterdam, Amsterdam, The Netherlands}
\email{r.r.j.bocklandt@uva.nl}
\thanks{The authors were supported in part by the NWO-grant 'Grant Algebraic methods and structures in the theory of Frobenius manifolds and their
applications' (TOPl.17.012).}

\author{Jasper van de Kreeke}
\address{KdV Institute, University of Amsterdam, Amsterdam, The Netherlands}
\email{j.d.c.vandekreeke3@uva.nl}

\subjclass{16E40, 16S80}
\date{\today}


\keywords{Deformation theory, Noncommutative algebra, representation theory}

\begin{abstract}
In this paper we calculate the Hochschild cohomology of gentle $A_\infty$-algebras of arc collections on marked surfaces. When the underlying 
arc collection has no loops or two-cycles, we show that the dgla structure of the Hochschild complex is formal and give an explicit realization of all deformations up to gauge equivalence.
\end{abstract}

\maketitle
\tableofcontents
\section{Introduction}

To a collection of oriented arcs on a closed surface with marked points, one can associate an $A_\infty$-algebra called the gentle $A_\infty$-algebra. These were introduced in \cite{Bocklandt,HKK} to study wrapped Fukaya categories of punctured surfaces \cite{abouzaid2013homological}, where the punctured surface is obtained by removing the marked points.
These algebras are also variants of well-known algebras that are studied in representation theory \cite{Assem,opper2018geometric,lekili2020derived}.

In this paper we study the deformation theory of these $A_\infty$-algebras in detail. The intuition from mirror symmetry tells us that deforming these algebras as curved $A_\infty$-algebras should correspond to filling in these punctures with normal points or orbifold points \cite{seidel2008homological,efimov2009homological}. 

We will work out this idea as follows. Starting from an arc collection on a closed surface with marked points, we introduce the combinatorial notion of an orbigon and use it to define a family of higher products that count these orbigons. We show that these structures satisfy the curved $A_\infty$-axioms and indeed deform the gentle $A_\infty$-algebra coming from the arc collection.

To show that these deformations solve the deformation problem we look at the Hochschild cohomology. We show that each element in the Hochschild cohomology is determined by its zeroth and first component; in other words its nullary and unary product.  In the case that the arc collection has no loops or two-cycles we obtain an explicit basis for the Hochschild cohomology. This enables us to give a description of the Gerstenhaber algebra structure on the Hochschild cohomology.

These computations match work done by Wong \cite{wong2021dimer} on the Borel-Moore  cohomology for matrix factorizations of dimer models (which is the B-side analogon of our setting, from the perspective of mirror symmetry \cite{Bocklandt}), as well as the theme that Hochschild cohomology of wrapped Fukaya categories agrees with symplectic homology \cite{Ganatra, Ritter-Smith}.

Finally we show that the bracket from the Gerstenhaber structure is formal and conclude that each solution of the Maurer-Cartan equation for the Hochschild cohomology is gauge equivalent to one of the curved $A_\infty$-structures we introduced.

\section{Curved gentle algebras}

\newcommand{\cA}{\mathcal{A}}
\newcommand{\Z}{ℤ}
\newcommand{\Int}{\operatorname{Int}}
\newcommand{\old}[1]{[OLD] #1}
\newcommand{\rmu}{{}^r\bbmu}
\newcommand{\ord}{\mathrm{ord}}

In this section we will introduce curved gentle algebras. These are curved deformations of the gentle $A_\infty$-algebras that were defined in \cite{Bocklandt} and \cite{HKK}. 

\subsection{Arc collections and gentle algebras.}
\begin{definition}
A \emph{marked surface} $(S,M)$ is a pair consisting of a compact oriented surface without boundary $S$ and a finite subset of marked points $M\subset S$. We will denote the genus of the surface by $g$. Furthermore we will assume that $ |M|≥3 $ if $ S $ is a sphere and $ |M|≥1 $ otherwise.

An \emph{arc collection} $\cA$ is a set of oriented curves $a: [0,1] \to S$ that meet $M$ only at the end points ($a^{-1}(M)=\{0,1\}$) and do not (self)-intersect internally.  We say that an arc collection \emph{splits the surface} if the complement of the arcs is the disjoint union of discs. These disks are called the \emph{faces} and we denote the set of faces by $F$. Each face can be seen as a polygon bounded by arcs.

An arc collection that splits the surface satisfies
\begin{itemize}
\item
the \emph{no monogons or digons condition [NMD]}, if no single arc or pair of arcs bounds a face. In this case the surface is split in $n$-gons with $n\ge 3$.
\item the \emph{no loops or two cycles condition [NL2]} if every arc has two different end points and no two arcs share more than one end point. 
\item
the \emph{dimer condition}, if the arcs around each disk form an oriented cycle. 
In case of a dimer, we will call a face positive or negative depending on whether the cycle around it is anticlockwise or clockwise. We can partition $F$ accordingly: $F=F^+\cup F^-$.
\end{itemize}

From now on we will assume that [NMD] holds, but in later sections we will sometimes have to assume the stronger [NL2] condition.
\end{definition}

Recall that a \emph{quiver} is a four-tuple $Q=(Q_0,Q_1,h,t)$ representing an oriented graph with vertices $Q_0$, arrows $Q_1$ and maps $h,t:Q_1\to Q_0$ that assign to each arrow its head and tail. A quiver is graded by a group $G$ if it comes with a map $|\cdot|: Q_1 \to G$.

The \emph{path algebra} $\C Q$ of a quiver $Q$ is the complex vector space spanned by the paths with as product concatenation of paths if possible and zero otherwise. We write the arrows as going from right to left so $\alpha\beta$ is a genuine path if $t(\alpha)=h(\beta)$. Every vertex of the quiver $v\in Q_0$ gives rise to a path of length zero, which is called the vertex idempotent $\id_v$. These span a semisimple subalgebra which we will denote by $\k\cong \C^{Q_0}$. If $Q$ is $G$-graded then $\C Q$ is a graded $\k$-algebra by assigning to each arrow its $G$-degree and to each vertex idempotent degree $0$. 

\vspace{.3cm}

It is tempting to consider the arc collection itself as a quiver, but we will not do this. Instead we will consider a different quiver, which is obtained by putting a vertex in the middle of each arc.
\begin{definition}
Given an arc collection $\cA$ that splits $(S,M)$, we define a $\Z_2$-graded quiver $Q_\cA$ as follows.
\begin{itemize}
    \item The vertices of the quiver are the arcs: $(Q_\cA)_0=\cA$.
    \item For each angle of a face we define an arrow that corresponds to the internal anticlockwise angle between consecutive arcs that meet in that corner. 
    \item An arrow has degree zero if both arcs have the same direction at the marked point (both outwards or both inwards), and degree one otherwise. Note that $\cA$ is a dimer if and only if all arrows have $\Z_2$-degree $1$.
\end{itemize}
\end{definition}
\begin{center}
\begin{tikzpicture}
\begin{scope}
  \draw[fill=black] (0,0) circle (.05cm);
\draw[thick,-latex] (225:1)--(0,0);
\draw[thick,-latex] (315:1)--(0,0);
\draw[-latex] (225:.5) arc (225:315:.5);
\draw (0,-1) node{$|\alpha|=0$};
\end{scope}

\begin{scope}[xshift=3cm]
  \draw[fill=black] (0,0) circle (.05cm);
\draw[thick,latex-] (225:1)--(0,0);
\draw[thick,latex-] (315:1)--(0,0);
\draw[-latex] (225:.5) arc (225:315:.5);
\draw (0,-1) node{$|\alpha|=0$};
\end{scope}

\begin{scope}[xshift=6cm]
  \draw[fill=black] (0,0) circle (.05cm);
\draw[thick,-latex] (225:1)--(0,0);
\draw[thick,latex-] (315:1)--(0,0);
\draw[-latex] (225:.5) arc (225:315:.5);
\draw (0,-1) node{$|\alpha|=1$};
\end{scope}

\begin{scope}[xshift=9cm]
  \draw[fill=black] (0,0) circle (.05cm);
\draw[thick,latex-] (225:1)--(0,0);
\draw[thick,-latex] (315:1)--(0,0);
\draw[-latex] (225:.5) arc (225:315:.5);
\draw (0,-1) node{$|\alpha|=1$};
\end{scope}
\end{tikzpicture}
\end{center}

We will denote these `angle arrows' by Greek letters and use $h(\alpha)$ and $t(\alpha)$ to denote the arcs (vertices) that correspond to the head and tail of the arrows. Each angle arrow also turns around a unique marked point and is contained in a unique face. We will denote these by $m(\alpha)$ and $f(\alpha)$.
In this quiver two consecutive arrows either are angles that turn around a common marked point or they correspond to consecutive angles in a face.
\[
t(\alpha)=h(β) \implies m(\alpha)=m(\beta) \text{ or }f(\alpha)=f(\beta)
\]

\begin{definition}
The \emph{gentle algebra} $\Gtl_\cA$ of an arc collection $\cA$ is the path algebra of $Q_{\cA}$ modulo the ideal of relations spanned by the products of arrows that are consecutive angles in a face. 
\[
\Gtl_\cA = \C Q_{\cA}/\<\alpha\beta | α \text{ and } β \text{ are consecutive angles in a face}\>
\]
The product rule in this algebra can be illustrated pictorially as follows.
\begin{center}
 \begin{tikzpicture}
 \begin{scope}
  \draw[fill=black] (0,0) circle (.05cm);
  \draw[thick] (225:1)--(0,0);
  \draw[thick] (270:1)--(0,0);
  \draw[thick] (315:1)--(0,0);
  \draw[-latex] (225:.5) arc(225:270:.5);
  \draw[-latex] (270:.5) arc(270:315:.5);
  \draw (292:.75) node {$\alpha$};
  \draw (248:.8) node {$\beta$};
  
\draw (0,-1.5) node{$\alpha\beta\ne 0$};
   \end{scope}

 \begin{scope}[xshift=4cm]
  \draw[fill=black] (0,0) circle (.05cm);
  \draw[fill=black] (1,0) circle (.05cm);
  \draw[thick] (225:1)--(0,0);
  \draw[thick] (0,0)--(1,0);
  \draw[thick,xshift=1cm] (0,0)--(315:1);
  \draw[fill=black] (0,0) circle (.05cm);
  \draw[-latex] (225:.4) arc (225:360:.4);
  \draw[-latex,xshift=1cm] (180:.4) arc (180:315:.4);
   \draw (1,-.65) node {$\alpha$};
  \draw (0,-.7) node {$\beta$};
 
\draw (.5,-1.5) node{$\alpha\beta=0$};
   \end{scope}
   \end{tikzpicture}
\end{center}
\end{definition}

\begin{remark}
The number of arrows is $2|\cA|$ because in each vertex precisely two arrows arrive and two arrows leave.  More precisely for every angle arrow $\alpha$ there are precisely two angle arrows $\beta_1$ and $\beta_2$ with $h(\alpha)=t(\beta_i)$. One will satisfy $\beta_1\alpha=0$ and for the other one we have $\beta_2\alpha\ne 0$. Likewise there are two $\gamma_i$ with $h(\gamma_i)=t(\alpha)$, one with $\alpha \gamma_1=0$ and one with $\alpha \gamma_2\ne 0$. This notion of a gentle algebra refers to this property and derives from representation theory \cite{Assem}. The usual definition of a gentle algebra also entails that the algebra is finite-dimensional but this is not the case for our algebras. It is possible to obtain finite dimesnional algebras by looking at surfaces with marked points on the boundary \cite{lekili2020derived,opper2018geometric}.
\end{remark}

\begin{example}\label{conifex}
In the picture below on the left you see an arc collection on a torus with two marked points. It has $4$ arcs and two square faces. The quiver $Q_\cA$ has $4$ vertices and $8$ angle arrows. The angle arrows all have degree $1$. The relations are generated by all paths $\alpha_i\beta_j$ and $\beta_i\alpha_j$, whenever the arrows are composable. This implies that every nonzero path in the gentle algebra is either a sequence of only $\alpha's$ or only $\beta's$. The former turn around the first marked point, while the latter turn around the second marked point.

\begin{center}
\begin{tikzpicture}
\draw[dotted] (0,0)--(2,0)--(2,2)--(0,2)--cycle;
\draw [-latex,shorten >=2pt] (0,0) to node [rectangle,fill=white,inner sep=1pt] {{\tiny $a_1$}} (1,1);
\draw [-latex,shorten >=2pt] (2,2) to node [rectangle,fill=white,inner sep=1pt] {{\tiny $a_3$}} (1,1);
\draw [-latex,shorten >=2pt] (1,1) to node [rectangle,fill=white,inner sep=1pt] {{\tiny $a_2$}} (2,0);
\draw [-latex,shorten >=2pt] (1,1) to node [rectangle,fill=white,inner sep=1pt] {{\tiny $a_4$}} (0,2);
\node at (0,0) {$\bullet$}; 
\node at (2,0) {$\bullet$}; 
\node at (0,2) {$\bullet$}; 
\node at (2,2) {$\bullet$}; 
\node at (1,1) {$\bullet$};

\begin{scope}[xshift=4cm]
\draw[dotted] (0,0)--(2,0)--(2,2)--(0,2)--cycle;
\draw [-latex] (.5,.5)--(0,.5);
\draw [-latex,shorten >=5pt] (2,.5)--(1.5,.5);
\draw [-latex] (1.5,.5)--(1.5,.0);
\draw [-latex,shorten >=5pt] (1.5,2)--(1.5,1.5);
\draw [-latex] (1.5,1.5)--(2,1.5);
\draw [-latex,shorten >=5pt] (0,1.5)--(.5,1.5);
\draw [-latex] (.5,1.5)--(.5,2);
\draw [-latex,shorten >=5pt] (.5,0)--(.5,.5);

\draw [-latex,shorten >=5pt] (.5,.5) --(1.5,.5);
\draw [-latex,shorten >=5pt] (1.5,.5) --(1.5,1.5);
\draw [-latex,shorten >=5pt] (1.5,1.5) --(.5,1.5);
\draw [-latex,shorten >=5pt] (.5,1.5) --(.5,.5);

\draw (1,.25) node{{\tiny $\alpha_1$}};
\draw (1,1.75) node{{\tiny $\alpha_3$}};
\draw (1.75,1) node{{\tiny $\alpha_2$}};
\draw (.25,1) node{{\tiny $\alpha_4$}};

\draw (-.15,.5) node{{\tiny $\beta_1$}};
\draw (2.15,.5) node{{\tiny $\beta_1$}};
\draw (-.15,1.5) node{{\tiny $\beta_3$}};
\draw (2.15,1.5) node{{\tiny $\beta_3$}};
\draw (.5,-.15) node{{\tiny $\beta_4$}};
\draw (.5,2.15) node{{\tiny $\beta_4$}};
\draw (1.5,-.15) node{{\tiny $\beta_2$}};
\draw (1.5,2.15) node{{\tiny $\beta_2$}};

\node at (.5,.5) [circle,draw,fill=white,minimum size=10pt,inner sep=1pt] {\mbox{\tiny $a_1$}}; 
\node at (1.5,.5) [circle,draw,fill=white,minimum size=10pt,inner sep=1pt] {\mbox{\tiny $a_2$}}; 
\node at (1.5,1.5) [circle,draw,fill=white,minimum size=10pt,inner sep=1pt] {\mbox{\tiny $a_3$}}; 
\node at (.5,1.5) [circle,draw,fill=white,minimum size=10pt,inner sep=1pt] {\mbox{\tiny $a_4$}}; 

\node at (1,1) {$\bullet$};
\node at (2,2) {$\bullet$};
\node at (0,0) {$\bullet$};
\node at (2,0) {$\bullet$};
\node at (0,2) {$\bullet$};

\end{scope}
\end{tikzpicture}
    
\end{center}
\end{example}
In general, the nonzero paths of a gentle algebra correspond to positive angles between arcs that share a marked point. Every nonzero angle paths is uniquely determined by it sequence of angle arrows and products of angles paths turning around different marked points are zero. 

For each $m \in M$ we define 
    \[
       \ell_m = \alpha_1\dots\alpha_k + \dots + \alpha_k\dots\alpha_1
    \]
where $\alpha_1,\dots,\alpha_k$ are the angle arrows around $m$ ordered in anticlockwise direction.
These elements represent single loops around the marked points and they generate the center of the algebra. 
\begin{lemma}\label{center}
\[
Z(\Gtl_\cA) = ℂ[\ell_m| m \in M]/(\ell_i\ell_j| i\ne j).
\]
\end{lemma}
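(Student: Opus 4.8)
The plan is to exhibit a basis of $Z(\Gtl_\cA)$ among the paths of $\Gtl_\cA$ and match it with the basis $\{1\}\cup\{\ell_m^k : m\in M,\ k\ge1\}$ of the right-hand side. One direction is easy. If two consecutive angle arrows turn around \emph{distinct} marked points, then, being composable, they lie in a common face by the dichotomy $t(\alpha)=h(\beta)\implies m(\alpha)=m(\beta)$ or $f(\alpha)=f(\beta)$, so their product is a face relation and vanishes; in particular $\ell_i\ell_j=0$ for $i\ne j$. Moreover $\ell_m$ is central: an angle arrow around $m'\ne m$ kills $\ell_m$ on both sides, while one around $m$ merely permutes cyclically the loops summed in $\ell_m$. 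Using [NMD] one checks that no two angles consecutive around a common marked point form a face relation (such a relation would force a monogon or digon), so $\ell_m^k$ is a nonzero sum of length-$kk_m$ paths, $k_m$ being the number of angle arrows at $m$. Hence the natural map $\phi\colon\C[\ell_m:m\in M]/(\ell_i\ell_j:i\ne j)\to Z(\Gtl_\cA)$ is well defined, and it is injective since the basis $\{1=\sum_a\id_a\}\cup\{\ell_m^k\}$ of its domain goes to path-basis elements with pairwise disjoint supports.

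For surjectivity, take $z\in Z(\Gtl_\cA)$. Imposing $\id_a z=z\id_a$ for all $a$ and using linear independence of paths shows $z$ is a combination of \emph{cyclic} paths (head equal to tail). The relations of $\Gtl_\cA$ are homogeneous for the path-length grading, so $Z(\Gtl_\cA)$ is graded and we may take $z$ homogeneous of degree $d$. A nonzero cyclic path of length $d\ge1$ is a string of angle arrows all winding around a single marked point (consecutive arrows around distinct marked points give a face relation); decomposing $z$ by this marked point and noting each summand commutes with all angle arrows and idempotents, hence is central, we reduce to $z$ supported on cyclic paths of length $d\ge1$ winding around one fixed $m$.

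Now impose $z\alpha=\alpha z$ for the angle arrows $\alpha$ around $m$. For a basis path $q$ of length $d+1$, comparing the coefficient of $q$ on the two sides expresses the coefficient in $z$ of ``$q$ minus its last arrow'' in terms of that of ``$q$ minus its first arrow''. Running this over all $q$ forces the coefficients in $z$ of the loops based at the various arcs around $m$ to agree and the coefficient of every other cyclic path around $m$ — in particular of any ``partial'' loop, present when some arc is a loop at $m$ — to vanish. Since distinct elementary loops around $m$ multiply to zero, $\ell_m^{d/k_m}$ is precisely the sum of the $d/k_m$-th powers of these loops, and so $z$ is a scalar multiple of $\ell_m^{d/k_m}$ when $k_m\mid d$, and $z=0$ otherwise. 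Finally, the degree-$0$ part of a central element is $\sum_a\mu_a\id_a$, and $z\alpha=\alpha z$ forces $\mu_a$ constant on arcs sharing a marked point; connectedness of $\bigcup_{a\in\cA}a$ then forces all $\mu_a$ equal, so this part is a scalar multiple of $1$. Reassembling homogeneous components gives $z\in\Im\phi$, which with injectivity proves the claim.

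The point to get right is the combinatorial bookkeeping at a marked point $m$: deciding precisely which strings of angle arrows around $m$ are nonzero in $\Gtl_\cA$ — this is where [NMD] enters — and then carrying out the coefficient comparison above, especially when an arc is a loop at $m$, in which case two angle arrows around $m$ may be composable without being cyclically adjacent in the link of $m$; the corresponding products, however, are face relations, so no unexpected nonzero cyclic paths occur. Under the stronger hypothesis [NL2] there are no such loops and the argument is straightforward.
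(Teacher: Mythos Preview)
Your proof is correct and follows essentially the same strategy as the paper's: both establish that any angle path appearing in a central element must be a full turn around its marked point by comparing coefficients in $\alpha z = z\alpha$ for angle arrows $\alpha$. You are simply more explicit about the bookkeeping (path-length grading, decomposition by marked point, injectivity of $\phi$, the loop-arc subtleties), whereas the paper compresses the same idea into two short paragraphs.
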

\begin{proof}
One can easily check that the $\alpha\ell_m=\ell_m\alpha$ if $m(\alpha)=m$ and $\alpha\ell_m=\ell_m\alpha=0$ if $m(\alpha)\ne m$. This also implies that $\ell_u\ell_v=0$ if $u\ne v$.

Suppose $z=c\beta +\dots$ is a nonzero central element containing the angle path $\beta$ and let $\alpha$ be the angle arrow that follows $\beta$ in the cycle around $m$. Then we have that $\alpha \beta\ne 0$, so if $\alpha z=z\alpha$ then $\alpha\beta$ must end in $\alpha$ and hence $\beta$ must be a cycle that winds a number of full turns around $m$. Therefore $z$ will contain $\ell_m^r$ and the $\ell_m^r$ form a basis for the part of the center with path length $>0$. The length $0$ part of the center is $ℂ$ because the quiver is connected. 
\end{proof}

\begin{lemma}\label{derivations}
As a $Z(\Gtl_\cA)$-module the outer ($\Z_2$-graded) $\k$-derivations are generated by the Euler derivations
\[
E_\alpha := \alpha\partial_\alpha.
\]
\end{lemma}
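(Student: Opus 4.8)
The plan is to describe the $\k$-linear derivations of $\Gtl_\cA$ completely explicitly in terms of the arrows of $Q_\cA$, and then to match them, modulo inner derivations, with $Z(\Gtl_\cA)$-combinations of the Euler derivations $E_\alpha$.

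First I would use that a $\Z_2$-graded $\k$-linear derivation $D$ of $\Gtl_\cA$ is the same datum as a tuple $\big(D(\alpha)\big)_{\alpha\in (Q_\cA)_1}$ with $D(\alpha)\in e_{h(\alpha)}\,\Gtl_\cA\, e_{t(\alpha)}$, subject to the single requirement that $D$ annihilate the defining relations: $D(\alpha)\beta + (-1)^{|D|\,|\alpha|}\alpha\,D(\beta)=0$ in $\Gtl_\cA$ whenever $\alpha,\beta$ are angle arrows consecutive in a face. By the description of the nonzero paths of $\Gtl_\cA$ recalled above, $e_{h(\alpha)}\,\Gtl_\cA\, e_{t(\alpha)}$ is spanned by the angle paths from the arc $t(\alpha)$ to the arc $h(\alpha)$, and each such path turns around a single marked point that is a common endpoint of these two arcs. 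Whenever [NL2] holds the two arcs are distinct and meet only at $m(\alpha)$, so this space is the free rank-one $\C[\ell_{m(\alpha)}]$-module generated by $\alpha$; in the degenerate configurations still allowed by [NMD] (two arcs meeting at two marked points, or an arc forming a face corner with itself) the space can be strictly larger, and these are the cases needing extra care below.

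Next I would dispose of the inner derivations. For a path $q$ one has $\operatorname{ad}_q(\alpha)=q\alpha-(-1)^{|q|\,|\alpha|}\alpha q$, and for a vertex idempotent $\operatorname{ad}_{e_v}=\sum_{h(\alpha)=v}E_\alpha-\sum_{t(\alpha)=v}E_\alpha$, which already lies in the $Z(\Gtl_\cA)$-span of the Euler derivations; so correcting $D$ by inner derivations never leaves that span. The core step is then to show that, after subtracting a suitable inner derivation, one may assume $D(\alpha)\in\C[\ell_{m(\alpha)}]\cdot\alpha$ for every arrow $\alpha$ — i.e.\ bring $D$ into ``diagonal'' form. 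If [NL2] holds this is automatic by the previous paragraph. In general, by the first step any component of $D(\alpha)$ along a path $q\ne\ell_{m(\alpha)}^{k}\alpha$ is forced to propagate coherently along the face(s) running through $\alpha$, and subtracting $\operatorname{ad}$ of an appropriate proper subpath of $q$ cancels it without creating new forbidden terms. Making this reduction precise and globally consistent over all arrows in the presence of the degenerate configurations above is the step I expect to be the main obstacle; under [NL2] the lemma needs essentially nothing beyond this paragraph's first sentence together with the next paragraph.

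Finally, once $D$ is diagonal I would write $D(\alpha)=c_\alpha(\ell_{m(\alpha)})\,\alpha$ with $c_\alpha\in\C[t]$; by \autoref{center} each $c_\alpha(\ell_{m(\alpha)})$ is a well-defined element of $Z(\Gtl_\cA)$, and a diagonal $D$ automatically satisfies the relation constraint of the first step since $c_\alpha(\ell_{m(\alpha)})\,\alpha\beta=0$ whenever $\alpha\beta=0$. Hence $D=\sum_{\alpha}c_\alpha(\ell_{m(\alpha)})\,E_\alpha$ in the $Z(\Gtl_\cA)$-module of derivations, because both sides send $\alpha$ to $c_\alpha(\ell_{m(\alpha)})\alpha$ and every other arrow to $0$; this exhibits the outer derivations as generated by the $E_\alpha$. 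The same observation settles the grading: in diagonal form $e_{h(\alpha)}\,\Gtl_\cA\, e_{t(\alpha)}$ sits in $\Z_2$-degree $|\alpha|$, so a diagonal derivation is even and no odd class survives in the outer derivations — any odd derivation arising from a degenerate configuration having been removed in the reduction step.
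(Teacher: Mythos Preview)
Your overall strategy matches the paper's: reduce any $\k$-derivation to diagonal form $D(\alpha)=c_\alpha(\ell_{m(\alpha)})\alpha$ modulo inner derivations, then read off the $Z(\Gtl_\cA)$-combination of the $E_\alpha$. Under [NL2] your argument is complete and in fact shorter than the paper's, because the observation that $e_{h(\alpha)}\Gtl_\cA e_{t(\alpha)}=\C[\ell_{m(\alpha)}]\cdot\alpha$ already forces diagonal form with no gauging needed. The paper, however, proves the lemma under the weaker hypothesis [NMD], and there your ``core step'' is genuinely incomplete. Two separate mechanisms are needed and you conflate them: first, components of $D(\alpha)$ that turn around a marked point \emph{different} from $m(\alpha)$ (possible when $h(\alpha),t(\alpha)$ share two endpoints) are ruled out directly by the relation constraint $D(\alpha)\beta\pm\alpha D(\beta)=0$, not by gauging --- the paper argues that such a term $\gamma$ would force $\gamma\beta\ne 0$ in $D(\alpha\beta)$ with no term to cancel it. Second, when $h(\alpha)$ or $t(\alpha)$ is a loop arc, there are partial-turn paths $\kappa\alpha$, $\alpha\kappa$, $\kappa_1\alpha\kappa_2$ around $m(\alpha)$, and these \emph{are} gauged away, but the paper has to do a careful case analysis (five cases) to find the right commutators $[\kappa,-]$ and to show they don't create new unwanted terms elsewhere; your one-line ``subtracting $\operatorname{ad}$ of an appropriate proper subpath'' is the right idea but not a proof.

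Your final remark about grading is incorrect: $e_{h(\alpha)}\Gtl_\cA e_{t(\alpha)}$ does \emph{not} sit in degree $|\alpha|$, since $|\ell_{m(\alpha)}^k\alpha|=k|\ell_{m(\alpha)}|+|\alpha|$ and $|\ell_m|$ can be odd. Thus $\ell_m^k E_\alpha$ can be an odd derivation, and the lemma (which is about $\Z_2$-graded derivations of both parities) genuinely includes odd outer classes when some $\ell_m$ is odd. Drop that sentence; the lemma makes no parity claim.
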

\begin{proof}
Let $\alpha$ be an angle arrow.
If $d$ is a $\k$-derivation then $h(d\alpha)=h(\alpha)$ and $t(d\alpha)=t(\alpha)$. If $d\alpha$ contains a term $\gamma$ that does not turn around the same marked point as $\alpha$, let $\beta$ be the angle such that $\gamma\beta\ne 0$ is cyclic. We then have that $\alpha\beta=0$ because they turn around different marked points. Therefore
\[
0 = d(\alpha\beta) = \gamma\beta+ \dots  \pm \alpha d\beta,
\]
but this is impossible because these two components turn around different marked points and hence cannot cancel each other.

So suppose $d\alpha$ turns around the same marked point $m$ as $\alpha$.
In that case there are the following possibilities. 
\begin{enumerate}
    \item If $a= h(\alpha)=t(\alpha)$ then either $a$ forms a monogon (which is excluded by condition [NMD]) or $\alpha=\ell_m$ is a full turn around a marked point $m$ with one arc $a$ arriving. 
    \[
    \begin{tikzpicture}
    \draw (0,0) node {$\bullet$};
    \draw (-1,0) circle (1cm);
    \draw[-latex] (95:.3) arc (95:265:.3);
    \draw (-.5,0) node{$\alpha$};
    \end{tikzpicture}\hspace{1cm}
    \begin{tikzpicture}
    \draw (0,0) node {$\bullet$};
    \draw (1,0) node {$\bullet$};
    \draw (0,0)--(1,0);
    \draw (1,1)--(1,-1);
    \draw[-latex] (0:.3) arc (0:360:.3);
    \draw (-.5,0) node{$\alpha$};
    \draw[-latex, xshift=1cm] (180:.3) arc (180:270:.3);
    \draw (.75,-.5) node{$\beta$};
   \end{tikzpicture}
    \]
    In the latter case $d\alpha = g(\alpha)\id_a$ for some polynomial $g$. This polynomial cannot have a constant term: let $\beta$ be the arrow that follows $\alpha$ in the face $f(\alpha)$. Then $\alpha$ and $\beta$ turn around different marked points so
    \[
    0=d(\beta\alpha)= (d\beta)\alpha \pm \beta g(\alpha)  = \pm g_0\beta.
    \]
    So $d\alpha = (g(\alpha)\alpha^{-1})\alpha= zE_\alpha(\alpha)$ for $z=g(\ell_m)\ell_m^{-1}$.
    \item If $a= h(\alpha)\ne t(\alpha)$ and $a$ is a loop then there is an angle path $\kappa: a \to a$ following $\alpha$ that does not form a full turn around $m$. 
      \[
    \begin{tikzpicture}
    \draw (0,0) node {$\bullet$};
    \draw (-1,0) circle (1cm);
    \draw[-latex] (95:.3) arc (95:265:.3);
    \draw (-.5,0) node{$\kappa$};
    \draw (.15,.5) node{$\alpha$};
    \draw (.15,-.5) node{$\beta$};
    \draw (0,0)--(45:1);
    \draw[dotted] (0,0)--(135:1);
    \draw[dotted] (0,0)--(225:1);
    \draw (0,0)--(315:1);
    \draw[-latex] (45:.3) arc (45:95:.3);
    \draw[-latex] (265:.3) arc (265:315:.3);
    \end{tikzpicture} \]
    In that case $d\alpha$ can contain a term of the form $\kappa\alpha$. Let $\beta$ be the angle arrow that directly follows $\kappa$. Note that $\beta\ne\alpha$ otherwise the arc $a$ forms a monogon.
    Because $\beta$ and $\alpha$ are on different ends of the arc $a$, we have that 
    $$d(\beta\alpha)= \beta(c\kappa\alpha+\dots) \pm (d\beta)\alpha=0.$$
    Therefore $d(\beta)$ contains $c\beta\kappa$. The commutator $[c\kappa,-]$
    is only nonzero for the angle arrows $\alpha$ and $\beta$. By substracting this from $d$ we can make these terms disappear.
    \item If $a= t(\alpha)\ne h(\alpha)$ and $a$ is a loop we can do a similar reasoning as above.
    \item If $a= t(\alpha)\ne h(\alpha)=b$ and $a,b$ are both loops then $d\alpha$ can contain a term of the form $\kappa_1\alpha\kappa_2$. If $\kappa_1$ is not a full turn then let $\beta$ be the angle arrow such that $\beta\kappa_1\alpha\kappa_2\ne 0$. Because $\beta\alpha\ne 0$ we get
    $$d(\beta\alpha)= \beta(c\kappa_1\alpha\kappa_2+\dots) \pm (d\beta)\alpha=0.$$
    So $\kappa_2$ must end in $\alpha$ but then $\kappa_2$ is a full turn, so
    $\kappa_1\alpha\kappa_2= \ell_m^r\kappa_1\alpha$, and we can make this term disappear with $[\ell_m^r\kappa_1,-]$. If $\kappa_1$ is a full turn we can do the same.
    \item If neither $h(\alpha)$ or $t(\alpha)$ are loops then every path in $d(\alpha)$ is of the form $\ell_m^r\alpha$. 
    \end{enumerate}
From the discussion above we see that we can remove all terms that are not of the form $\ell_m^r\alpha$ by subtracting commutators. 
\end{proof}
\begin{remark}
Lemma \ref{center} holds more generally for marked surfaces with bouundary, but lemma \ref{derivations} does not hold in this generality (think of the cylinder with one marked point on each boundary circle, this has an arc collection whose gentle algebra is the Kronecker quiver $\circ\implies\circ$). However if we impose the [NL2] condition the lemma still holds. 
\end{remark}

We end this section with a nice interpretation of Koszul duality in this setting.
If $\cA$ is an arc collection on $(S,M)$, choose a set of points $F \subset S$ in the centers of the faces. For each arc $a$ draw a perpendicular arc $a^\perp$ that connects the centers of the faces adjacent to $a$ and points inside the left face. The \emph{dual arc collection} $\cA^\perp=\{a^\perp \mid a \in \cA\}$ forms an arc collection for $(S,F)$.   
\begin{theorem}
The Koszul dual of the gentle algebra is a gentle algebra of the dual arc collection:
\[
(\Gtl_\cA)^! \cong \Gtl_{\cA^\perp}. 
\]
\end{theorem}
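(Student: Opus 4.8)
The plan is to make the Koszul duality statement concrete by writing down explicit generators and relations for $(\Gtl_\cA)^!$ and matching them with the presentation of $\Gtl_{\cA^\perp}$ coming from the definition of $Q_{\cA^\perp}$. Recall that for a quadratic $\k$-algebra $A = \C Q/\langle R\rangle$ with $R \subset V\otimes_\k V$ the space of degree-two relations (here $V = \C (Q_\cA)_1$ is the arrow space), the Koszul dual is $A^! = \C Q^{\mathrm{op}}/\langle R^\perp\rangle$, where $R^\perp \subset V^*\otimes_\k V^*$ is the annihilator. So the first step is to check that $\Gtl_\cA$ is quadratic: its relations are exactly the length-two paths $\alpha\beta$ where $\alpha,\beta$ are consecutive angles in a face, so $R$ is spanned by a subset of the basis $\{\alpha\beta : t(\alpha)=h(\beta)\}$ of $V\otimes_\k V$, and $R^\perp$ is spanned by the complementary subset, namely the composable pairs $\alpha\beta$ that are consecutive angles \emph{around a marked point} rather than in a face. (Here I use the dichotomy stated in the excerpt: if $t(\alpha)=h(\beta)$ then either $m(\alpha)=m(\beta)$ or $f(\alpha)=f(\beta)$, and by [NMD] these do not both happen; the two composable continuations $\beta_1,\beta_2$ of a given $\alpha$ split as one face-continuation and one marked-point-continuation.)

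Next I would identify the quiver $Q^{\mathrm{op}}_\cA$ (same vertices $\cA$, arrows reversed) with $Q_{\cA^\perp}$. This is the geometric heart of the argument: an arrow of $Q_\cA$ is an internal anticlockwise angle of a face $f$ at a marked point $m$, sitting between two consecutive arcs $a,b$ on the boundary of $f$; under the perpendicular-arc construction this same angle is an anticlockwise angle of the vertex $m$ (now a ``face'' of $\cA^\perp$) between the consecutive dual arcs $a^\perp, b^\perp$, but traversed in the opposite rotational sense — which is exactly an arrow of $Q_{\cA^\perp}$ with head and tail swapped. So the bijection ``angle $\leftrightarrow$ angle'' gives $Q^{\mathrm{op}}_\cA \cong Q_{\cA^\perp}$ as ungraded quivers, interchanging the roles of $m(-)$ and $f(-)$. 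One must also check the $\Z_2$-grading matches: an angle has degree $0$ iff the two arcs point the same way at $m$; passing to dual arcs and reversing, one sees the degree of the corresponding arrow of $Q_{\cA^\perp}$ differs from the original by $1$, which is precisely the degree shift built into Koszul duality $A^!$ (generators in degree $1$). I would record this as a short lemma and include a picture of one face and its dualized angles.

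Combining the two steps: under the identification $Q^{\mathrm{op}}_\cA = Q_{\cA^\perp}$, a composable pair ``consecutive angles around a marked point $m$ of $\cA$'' becomes a composable pair ``consecutive angles in the face $m$ of $\cA^\perp$'', i.e. $R^\perp$ maps exactly onto the defining relations of $\Gtl_{\cA^\perp}$. Hence $(\Gtl_\cA)^! = \C Q^{\mathrm{op}}_\cA/\langle R^\perp\rangle \cong \C Q_{\cA^\perp}/\langle \text{face relations}\rangle = \Gtl_{\cA^\perp}$, as graded $\k$-algebras. I expect the main obstacle to be bookkeeping rather than conceptual: carefully pinning down the orientation conventions so that ``anticlockwise angle of a face'' really does dualize to ``anticlockwise angle of a vertex'' with the claimed reversal (the ``points inside the left face'' clause in the definition of $a^\perp$ is there precisely to fix this, and it needs to be used), and tracking the $\Z_2$-degree shift through this reversal. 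A secondary point worth a remark: [NMD] guarantees the relation space $R$ and its complement are each nonempty at every vertex, so no degenerate cases (monogons/digons) spoil the quadratic-duality dichotomy; without it the statement would need qualification.
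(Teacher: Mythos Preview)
Your approach is correct but genuinely different from the paper's. You compute the quadratic dual directly via $A^! = \C Q^{\mathrm{op}}/\langle R^\perp\rangle$, using that the relation space $R$ is spanned by a subset of the monomial basis of $V\otimes_\k V$ so that $R^\perp$ is spanned by the complementary subset. The paper instead computes the Koszul dual as $\mathrm{Ext}^\bullet_A(\k,\k)$ by invoking Bardzell's linear bimodule resolution of a gentle algebra: the resolution is spanned by paths $b=\beta_l\cdots\beta_1$ with all $\beta_i\beta_{i-1}=0$ (i.e.\ paths turning around faces), one reads off the Ext product as reverse concatenation of such paths, and then identifies this geometrically with the angle structure of $\cA^\perp$.

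What each buys: your argument is more elementary---no homological algebra beyond the definition of the quadratic dual---and makes the role of the marked-point/face dichotomy completely transparent. The paper's argument is heavier but yields more: the existence of Bardzell's linear resolution is precisely the statement that $\Gtl_\cA$ is Koszul, so the Ext computation simultaneously justifies calling the quadratic dual ``the'' Koszul dual and exhibits the minimal resolution. Your route takes Koszulity (or the BGS quadratic-dual convention, as the paper's subsequent remark clarifies) as given. One small caution on your dichotomy step: the exclusivity of ``face-consecutive'' versus ``marked-point-consecutive'' is not really a consequence of [NMD] but of the local picture that the two outgoing arrows at an arc lie in the two faces on either side; this is worth stating carefully, since loop arcs (allowed under [NMD]) have both endpoints at the same marked point yet the dichotomy still holds.
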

\begin{proof}
From Bardzell \cite{bardzell1997alternating} and \cite{Bocklandt} we know that $A = \Gtl_\cA$ has a bimodule resolution $B^\bullet$ spanned by $A\otimes_\k b \otimes_\k A$
where $b = \beta_l\beta_{l-1} \cdots \beta_1 \in \C Q_{\cA}$ is a path of angle arrows such that all products $\beta_i\beta_{i-1}$ are zero. In other words paths that turn around faces. The maps between the terms have the following form
\[
 1\otimes b_k\dots b_1 \otimes 1 \mapsto b_k\otimes b_{k-1}\dots b_1 \otimes 1 - (-1)^k 1 \otimes b_k\dots b_{2}\otimes b_1
\]
Therefore the Koszul dual $\mathrm{Ext}^\bullet_A(\k,\k)=\H\Hom(B^\bullet, {}_L\k\otimes \k_R)$ is spanned by the dual basis $b^\vee$. The element $b^\vee$ corresponds to the (right) module extension
\[
0 \xleftarrow{}\C \id_{t(\beta_l)}\xleftarrow{\beta_l} \C \id_{h(\beta_l)}\xleftarrow{} \cdots\xleftarrow{} \C \id_{t(\beta_1)} \xleftarrow{\beta_1} ℂ\id_{h(\beta_1)}\xleftarrow{}0.
\]
Note we use right modules because in that way $\alpha$ and $\alpha^\vee$ run in opposite directions. Stitching together two of these module extensions shows that the ext product matches the (reverse) concatenation of paths if the paths turn around the same face. All other products are zero. In particular the ext algebra is spanned by dual angle arrows and
$\alpha^\vee\beta^\vee = 0$ if $m(\alpha)=m(\beta)$. Finally if $\alpha$ is an angle arrow then the dual $\alpha^\vee$ in the Koszul dual has degree $1-|\alpha|$. All this can be realised geometrically by considering the dual angles as angles between perpendicular arcs.
\begin{center}
    \begin{tikzpicture}
    \draw[black!50,-latex,very thick] (-30:2)--(0,0) node{$\bullet$};
    \draw[black!50,-latex,very thick] (30:2)--(0,0);
    \draw[black!50,-latex] (-30:.3) arc (-30:30:.3);
    \draw[black!50] (.7,0.1) node{$\alpha^\vee$};
    \draw[xshift=1.8cm,latex-,,very thick] (0,0)--(-120:1.5);    
    \draw[xshift=1.8cm,-latex,very thick] (0,0)node {$\bullet$}--(120:1.5);
    \draw (1.3,0) node{$\alpha$};
    \draw[xshift=1.8cm,-latex] (120:.3) arc (120:240:.3);
    \end{tikzpicture}
\end{center}
\end{proof}

\begin{remark}
In the theorem above we are referring to the classical Koszul dual \cite{BGS} where the gentle algebra is considered as a graded algebra graded by path length.
A similar statement holds if we work with Koszul duality for $A_\infty$-algebras but then we need to work with the gentle algebras completed by path length. 
\end{remark}

\subsection{Orbigons}

Now let $R$ be a complete local commutative ring over $ℂ$ with maximal ideal $R^+$ and residue field $ R/R^+ = ℂ $.
For each element $r \in Z(\Gtl_\cA) \htensor R^+ $ we will define a curved $R$-linear $A_\infty$-structure over $\Gtl_\cA\htensor R$. When tensored with $R/R^+$ the result will be the gentle $A_\infty$-algebra as defined in \cite{Bocklandt,HKK}, so our construction gives a family of deformations of the latter. 

To describe these deformations, we need the notion of an orbigon.
Morally this is a branched cover from a disk to the surface such that the boundary is mapped to arcs and the branch points are mapped to marked points. We can construct such branched covers by joining faces together. We will define this concept combinatorially and inductively in two steps.

\begin{definition}
\emph{Tree-gons} are certain sequences of angles up to cyclic permutation. The basic tree-gon come from faces: $(\alpha_k,\dots ,\alpha_1)$ is a tree-gon if they form the consecutive angles of a face such that $h(\alpha_i)=t(\alpha_{i+1})$. 

If $(\alpha_k,\dots ,\alpha_1)$ and $(\beta_l,\dots,\beta_1)$ are tree-gons such that $\alpha_1\beta_l\ne 0$ and $\alpha_k\beta_1\ne 0$ in $\Gtl_\cA$ then we define a new tree-gon 
\[
(\beta_1\alpha_k,\alpha_{k-1},\dots,\alpha_2,\alpha_1\beta_l,\dots,\beta_2)
\]
Geometrically this operation stitches the two tree-gons together over the common arc $h(\beta_l)=t(\alpha_1)$.
\begin{center}
    \begin{tikzpicture}
    \foreach \i in {1,...,6}
    {
    \draw (\i*60-90:1)--(\i*60-30:1);
    \draw (\i*60-90:.8) arc (\i*60+90:\i*60+90-60:.2);
    \draw[-latex] (\i*60-90:.8) arc (\i*60+90:\i*60+90+60:.2);    
    \draw (-\i*60+30:.5) node{{\tiny $\beta_\i$}};
    }
    \begin{scope}[xshift=1.73cm]
    \foreach \i in {1,...,6}
    {
    \draw (\i*60+30:1)--(\i*60+90:1);
    \draw (\i*60+30:.8) arc (\i*60+210:\i*60+210-60:.2);
    \draw[-latex] (\i*60+30:.8) arc (\i*60+210:\i*60+210+60:.2);
    \draw (-\i*60+210:.5) node{{\tiny $\alpha_\i$}};
    }
    \end{scope}
    \end{tikzpicture}
\end{center}
\end{definition}
\begin{remark}
From this definition one can deduce that tree-gons are sequences of internal angles of faces stitched together in a tree-like way. 
This tree, whose nodes are the faces and whose edges are the stitched arcs, can be reconstructed solely from the angle sequence.

Let $(\gamma_u,\dots,\gamma_1)$ be the sequence of all the indecomposable angle arrows in the tree-gon. Define a permutation $\sigma$ on $\{1,\dots,u\}$ as follows: for every $i$ there will be a unique shortest nontrivial path $\gamma_j\dots \gamma_{i+1}$ with $i+u \le j<i$ that lifts to a contractible loop in the universal cover of $S\setminus M$. Set $\sigma(i)=j \mod u$. One can easily show that for a tree-gon we have that $\sigma^2=\Id$ and $\sigma(i)=i$ if and only if $h(\gamma_i)$ is an arc on the boundary of the tree-gon. 
This means that the internal arcs are in 1--1 correspondence with the 2-cycles in $\sigma$. The nodes of the tree on the other hand correspond to the orbits of the permutation $\sigma \circ (u \dots 1)$. Indeed this permutation follows the angles and crosses over if the arc is internal, and therefore it cycles around the faces. An edge and a node are incident if their orbits intersect. 
\end{remark}

Now we will allow to fold together two consecutive arcs on the boundary of a tree-gon that are identical. The result is a disk-like shape with internal marked points, so some of the angles sit in the interior of the disk. We will use square brackets to denote what is interior. In general we get sequences of angles separated by square brackets and commas such as $(\alpha,\beta[\gamma[\delta]\epsilon]\zeta[\eta])$. The \emph{reduced sequence} cuts out anything that is between square brackets: e.g. $(\alpha,\beta\zeta)$. Again we work inductively. 

\begin{definition}
An \emph{orbigon} is a bracketed cyclic sequence of angles. The basic orbigons are tree-gons without any brackets and if $(\dots,U,\dots)$ is an orbigon for which the reduced sequence of $U$ is an angle that turns $r$ full turns around a marked point $m$ then $(\dots[U]\dots)$ is also an orbigon. The type of the orbigon is a multiset containing all pairs $p=(m,r)$ that were needed to introduce the brackets. Such a pair is also called an \emph{orbifold point}. 

Different tree-gons can be folded together to form the same disk, therefore we also need to impose an equivalence relation on the orbigons. 
Suppose $(\alpha_k,\dots ,\alpha_1)$ and $(\beta_l,\dots,\beta_1)$ are tree-gons and
\[
(\beta_1\alpha_k|\dots|\alpha_j\mathbf{[}\underbrace{\alpha_{j-1}|\dots |\alpha_1}_{U_1}\underbrace{\beta_l|\dots|\beta_{i+1}}_{U_2}\mathbf{]}\beta_i|\dots|\beta_2)
\]
is an orbigon, where the $|$-separators can be commas or brackets. Then the shift rule shifts the position of a piece between brackets and reverses the order of the two parts $U_1,U_2$ that are in thee different tree-gons as follows 
\[
(\beta_1\mathbf{[}\underbrace{\beta_l|\dots|\beta_{i+1}}_{U_2}\underbrace{\alpha_{j-1}|\dots |\alpha_1}_{U_1}\mathbf{]}\alpha_k|\dots|\alpha_j\beta_i|\dots|\beta_2)
\]
where brackets in $U_1$ that are paired up with a bracket $U_2$ and vice versa are flipped, for the formula to make sense.

\begin{center}
    \begin{tikzpicture}
\begin{scope}[scale=1.5]
\draw[black!25] (0,0)--(3,0)--(3,3)--(0,3)--cycle;
\foreach \i in {0,1,2}
{\foreach \j in {0,1,2}
{
\draw[black!25] (\i+1,\j)--(\i,\j)--(\i,\j+1); 
}
}
\foreach \i in {0,1,2,3}
{\foreach \j in {0,1,2,3}
{
\node at (\i,\j) [black!25,circle,draw,fill=white,minimum size=10pt,inner sep=1pt] {};
}
}
 \draw[latex-,rounded corners=10,dashed] (2.5,0)--(3,.5)--(2.5,1)--(3,1.5)--(2.5,2)--(3,2.5)--(2.5,3)--(2,2.5)--(1.5,3)--(1,2.5)--(.5,3)--(0,2.5)--(.5,2)--(0,1.5)--(.5,1)--(0,.5)--(.5,0)--(1,.5)--(1.5,0)--(2,.5)--(1.5,1)--(1,.5)--(.5,1)--(1,1.5)--(.5,2)--(1,2.5)--(1.5,2)--(1,1.5)--(1.5,1)--(2,1.5)--(1.5,2)--(2,2.5)--(2.5,2)--(2,1.5)--(2.5,1)--(2,.5)--cycle;
\draw (1.35,2.65) node{{\tiny $\alpha_k$}};
\draw (1.35,2.35) node{{\tiny $\alpha_1$}};
\draw (.65,2.65) node{{\tiny $\beta_1$}};
\draw (.65,2.35) node{{\tiny $\beta_l$}};
\draw (.35,2.65) node{{\tiny $\beta_2$}};
\draw (1.65,.65) node{{\tiny $\beta_i$}};
\draw (1.65,1.35) node{{\tiny $\alpha_j$}};
\draw (1.5,.85)node{{$]$}};
\draw (1.5,1.15) node{{$]$}};
\draw[-latex] (0.1,1.45)--(0.1,1.55);

\draw (.85,1.5)node[rotate=90]{{$[$}};
\draw (1.15,1.5) node[rotate=90]{{$[$}};
\draw (1.85,1.5)node[rotate=90]{{$[$}};
\draw (2.15,1.5) node[rotate=90]{{$[$}};
\draw (1.85,.5)node[rotate=90]{{$[$}};
\draw (2.15,.5) node[rotate=90]{{$[$}};
\end{scope}

\begin{scope}[scale=1.5,xshift=5cm]
\draw[black!25] (0,0)--(3,0)--(3,3)--(0,3)--cycle;
\foreach \i in {0,1,2}
{\foreach \j in {0,1,2}
{
\draw[black!25] (\i+1,\j)--(\i,\j)--(\i,\j+1); 
}
}
\foreach \i in {0,1,2,3}
{\foreach \j in {0,1,2,3}
{
\node at (\i,\j) [black!25,circle,draw,fill=white,minimum size=10pt,inner sep=1pt] {};
}
}
 \draw[latex-,rounded corners=10,dashed] (2.5,0)--(3,.5)--(2.5,1)--(3,1.5)--(2.5,2)--(3,2.5)--(2.5,3)--(2,2.5)--(1.5,3)--(1,2.5)--(1.5,2)--(1,1.5)--(1.5,1)--(1,.5)--(.5,1)--(1,1.5)--(.5,2)--(1,2.5)--(.5,3)--(0,2.5)--(.5,2)--(0,1.5)--(.5,1)--(0,.5)--(.5,0)--(1,.5)--(1.5,0)--(2,.5)--(1.5,1)--(2,1.5)--(1.5,2)--(2,2.5)--(2.5,2)--(2,1.5)--(2.5,1)--(2,.5)--cycle;

\draw (1.35,2.65) node{{\tiny $\alpha_k$}};
\draw (1.35,2.35) node{{\tiny $\alpha_1$}};
\draw (.65,2.65) node{{\tiny $\beta_1$}};
\draw (.65,2.35) node{{\tiny $\beta_l$}};
\draw (.35,2.65) node{{\tiny $\beta_2$}};
\draw (1.65,.65) node{{\tiny $\beta_i$}};
\draw (1.65,1.35) node{{\tiny $\alpha_j$}};
\draw (.85,2.5)node[rotate=90]{{$]$}};
\draw (1.15,2.5) node[rotate=90]{{$]$}};
\draw[-latex] (0.1,1.45)--(0.1,1.55);

\draw (.85,1.5)node[rotate=90]{{$]$}};
\draw (1.15,1.5) node[rotate=90]{{$]$}};
\draw (1.85,1.5)node[rotate=90]{{$[$}};
\draw (2.15,1.5) node[rotate=90]{{$[$}};
\draw (1.85,.5)node[rotate=90]{{$[$}};
\draw (2.15,.5) node[rotate=90]{{$[$}};
\end{scope}
\end{tikzpicture}
\end{center}
The \emph{shift rule} generates an equivalence relation on the orbigons that preserves the type and the reduced sequence.
\end{definition}

\begin{remark}
Every tree-gon corresponds to a tree with nodes labeled by faces and edges labeled by arcs. Each folding operation adds another edge to the graph corresponding to the arc $a=h(U)$. In this way we end up with a graph of which this tree is a spanning tree. The graph, which we will sometimes refer to as the \emph{face graph}, is planar. It divides the plane into regions which can be labeled by a pair $(m,r)$ from the type multiset. Clearly the equivalence relation changes the underlying spanning tree while keeping the face graph the same. For the example above the face graph is a grid with $3\times 3$ nodes and the spanning trees are indicated in bold.
\begin{center}
\begin{tikzpicture}
\draw[black!25] (0,0)--(2,0)--(2,2)--(0,2)--cycle;
\draw[black!25] (1,0)--(1,2);
\draw[black!25] (0,1)--(2,1);
\draw[very thick] (2,0)--(2,2)--(0,2)--(0,0)--(1,0);
\draw[very thick] (1,2)--(1,1);

\begin{scope}[xshift=5cm]
\draw[black!25] (0,0)--(2,0)--(2,2)--(0,2)--cycle;
\draw[black!25] (1,0)--(1,2);
\draw[black!25] (0,1)--(2,1);
\draw[very thick] (2,0)--(2,2)--(1,2)--(1,0)--(0,0)--(0,2);
\end{scope}
\end{tikzpicture}    
\end{center}
The spanning tree of the face graph completely determines the unreduced sequences of the orbigon because it is the sequence of angles that runs around the tree. It also determines the brackets: each time you cross an edge of the face graph that is not in the spanning tree you open or close a bracket depending on whether you crossed it the first or the second time. 
\end{remark}
\begin{lemma}
Two orbigons are equivalent if and only if their face graphs are equivalent as labeled planar graphs.
\end{lemma}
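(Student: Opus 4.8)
The plan is to exploit the dictionary, set up in the two remarks preceding the lemma, between an orbigon and a choice of spanning tree of its face graph, combined with the basis–exchange property for spanning trees.

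\emph{The ``only if'' direction.} Suppose two orbigons differ by a single application of the shift rule. By the last clause of the definition, the shift rule preserves the type and the reduced sequence, so the set of faces appearing, the labels of the internal arcs, and the labels $(m,r)$ of the regions are all unchanged. The remark following the definition records that the shift rule only alters the underlying spanning tree: the stitching arc $e=h(\beta_l)=t(\alpha_1)$ that was a tree edge becomes a folding edge, the folding arc $f=h(U_1U_2)$ becomes a tree edge, while the planar graph together with its node labels (faces), edge labels (arcs) and region labels (orbifold points) is untouched. Hence a finite chain of shift rules induces an isomorphism of labeled planar graphs between the two face graphs, and the implication follows.

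\emph{The ``if'' direction.} Suppose $\varphi$ is an isomorphism of labeled planar graphs between the face graphs $G_1$ of $O_1$ and $G_2$ of $O_2$. By the last remark, an orbigon is recovered from its face graph together with a spanning tree $T$: one reads the unreduced angle sequence off the closed walk around $T$ in the planar embedding, opening or closing a bracket at each crossing of a non-tree edge (the first crossing opens, the second closes). Since $\varphi$ respects the planar embedding and all labels, the angles read off the walk around $\varphi(T_1)$ in $G_2$ coincide with those read off the walk around $T_1$ in $G_1$; so the spanning tree $\varphi(T_1)$ of $G_2$ recovers $O_1$ itself. It therefore suffices to treat the case $G_1=G_2=:G$, a single fixed labeled planar graph, with $O_1$ and $O_2$ corresponding to two spanning trees $T_1,T_2$ of $G$. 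Now invoke the exchange property of the graphic matroid of $G$: the spanning trees of a connected graph are the bases of a matroid, so $T_1$ and $T_2$ are joined by a finite chain $T_1=S_0,S_1,\dots,S_N=T_2$ with $S_{k+1}=(S_k\setminus\{e_k\})\cup\{f_k\}$ for a tree edge $e_k\in S_k$ lying on the fundamental cycle of the non-tree edge $f_k$, every $S_k$ again a spanning tree. It remains to check that passing from the orbigon of $(G,S_k)$ to the orbigon of $(G,S_{k+1})$ is realised by a bounded number of applications of the shift rule: deleting $e_k$ splits the tree-gon of $S_k$ into two sub-tree-gons $(\alpha_*)$ and $(\beta_*)$ stitched along $e_k$, the non-tree edge $f_k$ (whose fundamental cycle passes through $e_k$) appears in the sequence as a matched bracket pair straddling $e_k$, and the induced splitting of the bracketed block into $U_1$ (the $\alpha$-part) and $U_2$ (the $\beta$-part) is exactly the one in the statement of the shift rule; the shift rule then moves this block, reverses $U_1$ and $U_2$, and flips the nested brackets, which is precisely the sequence-and-bracket data obtained by walking around $S_{k+1}$. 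Since the face graph — hence the type and reduced sequence — is unchanged along the chain, $O_1$ and $O_2$ are equivalent.

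The main obstacle I anticipate is exactly this last bookkeeping step in the ``if'' direction: one must verify that for every permissible exchange pair $(e_k,f_k)$ the corresponding bracketed block is genuinely in the straddling configuration demanded by the hypothesis of the shift rule — using, where needed, the freedom to cyclically rotate the angle sequence and to re-choose which edge of the tree one cuts, neither of which changes the orbigon — and that the prescribed nested-bracket flips agree with the re-opening/closing pattern produced by the walk around the new spanning tree. Everything else (the matroid exchange statement, the invariance of type and reduced sequence, the reconstruction of an orbigon from its face graph and a spanning tree) is either standard or already established in the excerpt.
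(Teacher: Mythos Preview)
Your proposal is correct and follows essentially the same route as the paper: both reduce the ``if'' direction to showing that a single edge exchange $T \rightsquigarrow (T\setminus\{e\})\cup\{f\}$ of spanning trees is realised by one application of the shift rule, and then iterate such exchanges to connect $T_1$ to $T_2$ (you phrase this via the graphic-matroid basis-exchange property, the paper phrases it as a direct ``one edge closer to $T_2$'' induction). The bookkeeping step you flag---that the bracket pair for $f$ sits in the straddling configuration demanded by the shift rule once $e$ is chosen as the stitching edge---is likewise left implicit in the paper's proof, so your level of detail matches theirs.
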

\begin{proof}
By construction the shift rule does not change the underlying face graph. Now suppose that the two orbigons have isomorphic face graphs, then we have to show that we can move from one spanning tree $T_1$ to another spanning tree $T_2$ via the shift rules. 
If $a$ is an edge (or dually an arc) in $T_1$ not contained in $T_2$ then if we remove $a$, $T_1$ will split in two parts (or dually tree-gons) denote the angles in the first tree-gon by $\alpha_k,\dots,\alpha_1$ and those in the second tree-gon by $\beta_l,\dots,\beta_1$ such that $a=t(\beta_1)=h(\alpha_k)$. 

Because $T_2$ is connected it must contain an arc $b$ that connects the two parts of $T_1\setminus \{a\}$. Now find the indices $i,j$ such that $b=h(\beta_i)=t(\alpha_j)$ and perform the corresponding shift rule. 
The result will be an orbigon with a spanning tree equal to $T_1\setminus \{a\} \cup \{b\}$, which is one edge closer to $T_2$. Keep repeating this procedure until the spanning tree is $T_2$.
\end{proof}

\begin{remark}
Note that we can also stitch orbigons together over a common arc in their reduced sequences by stitching the underlying tree-gons and transferring the brackets to the stiched tree-gon. The face graph of the new orbigon consists of the two graphs of the smaller orbigons joined together by one edge labeled by the common arc. Moreover because the common arc must be contained in all spanning trees, this implies that the small orbigons are uniquely determined by the big one.
\end{remark}

\begin{lemma}\label{2possibilities}
If $(\alpha_k,\dots, \xi\eta, \dots \alpha_1)$ is the reduced sequence of an orbigon and $\xi,\eta$ are nontrivial angle paths then there are two possibilities:
\begin{itemize}
    \item[A] the orbigon is stitched together of two smaller orbigons with reduced sequences
    $(\beta_r, \dots, \xi)$ and $(\gamma_r, \dots, \eta)$ such that $\gamma_r\beta_r=\alpha_r$ for some $r$.
    \item[B] the orbigon is folded together from an orbigon with reduced sequence
    $(\alpha_k,\dots, \xi, \ell_m^r h(\eta),\eta, \dots, \alpha_1)$ 
\end{itemize}
in both cases these orbigons are unique.
\end{lemma}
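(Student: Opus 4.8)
The plan is to carry the whole discussion over to the face graph $G$ of the orbigon together with the planar data attached to it by the preceding lemmas: its bounded faces are labelled by the orbifold points, a spanning tree encodes a representative bracketed cyclic sequence, and the reduced form deletes exactly the parts cut off by the non-tree edges. First I would locate the factorization inside $G$. Writing the entry $\xi\eta = \gamma_1\cdots\gamma_p$ as a product of angle arrows with $\xi = \gamma_1\cdots\gamma_q$ and $\eta = \gamma_{q+1}\cdots\gamma_p$, the nonvanishing of all the products $\gamma_i\gamma_{i+1}$ forces every $\gamma_i$ to turn around one common marked point $m = m(\xi) = m(\eta)$, and forces each arc $h(\gamma_i) = t(\gamma_{i+1})$ (for $i<p$) to be crossed inside the entry, hence to be an internal arc of the orbigon, i.e. an edge of $G$. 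The factorization thus singles out the edge $a := t(\xi) = h(\eta)$ of $G$; its two endpoints are the faces $f(\gamma_q)$ and $f(\gamma_{q+1})$, and both carry a corner at the copy of $m$ around which the entry $\xi\eta$ winds. Everything now turns on whether $a$ is a bridge of $G$.

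If $a$ is a bridge, deleting it splits $G$ into two connected pieces. By the remark preceding the statement, an orbigon with such a bridge is stitched from two smaller orbigons $O_1$ and $O_2$ over the common arc $a$, which is exactly the bridge and which lies in every spanning tree. Following the boundary walk, the corner that wound $\xi\eta$ around $m$ is cut at $a$ into the trailing corner $\xi$ of one piece and the leading corner $\eta$ of the other, and the other endpoint of $a$ becomes the second stitching corner; reading off the stitching formula one finds that $O_1$ and $O_2$ have reduced sequences of the form $(\dots,\xi)$ and $(\dots,\eta)$, and that the other stitched entry is one of the $\alpha_i$ of $O$, equal to $\gamma_r\beta_r$ with $\beta_r,\gamma_r$ the relevant boundary entries of $O_1,O_2$. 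This is case A.

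If $a$ is not a bridge it lies on a cycle of $G$, so of the two sides of $a$ in the plane exactly one is a bounded face of $G$; label the corresponding orbifold point $(m,r)$, with $r \ge 1$, noting that the other side is the unbounded face because $a$ reaches the outer boundary at the copy of $m$ around which $\xi\eta$ winds. Deleting the single edge $a$ from $G$ merges $(m,r)$ into the unbounded face, producing the face graph of an orbigon $O'$; in the boundary walk this replaces the corner $\xi\eta$ by the three consecutive corners $\xi$, $\ell_m^r h(\eta)$, $\eta$ — the middle one winds exactly $r$ full turns around $m$ based at $a = h(\eta)$ — and changes nothing else, so $O'$ has reduced sequence $(\alpha_k,\dots,\xi,\ell_m^r h(\eta),\eta,\dots,\alpha_1)$ and $O$ is recovered by folding this full-turn entry. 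This is case B. The two cases are complementary, so exactly one occurs; and in either case the decomposition is unique, since $a$ is forced by the factorization and then the pieces (in case A by the remark preceding the statement, in case B by deleting the one edge $a$ from the labelled planar graph $G$) are read off canonically.

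The step I expect to be the main obstacle is the bookkeeping hidden in the previous two paragraphs: checking that cutting along $a$, respectively deleting $a$ from $G$, affects only the single corner over $m$ — splitting $\xi\eta$ into $\xi$ and $\eta$ and, in case B, inserting precisely the full turn $\ell_m^r h(\eta)$ between them — while leaving all other entries $\alpha_i$ in their cyclic positions; and dealing with the degenerate configurations, for instance when $\xi$ or $\eta$ already contain full turns, or when $a$ is a loop at $m$ so that in case A the two stitching corners lie at the same marked point, which have to be verified to fall into the stated cases rather than to create extra ones.
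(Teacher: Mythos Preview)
Your approach is correct and essentially identical to the paper's: the paper also passes to the face graph, singles out the edge corresponding to the arc $t(\xi)=h(\eta)$, and distinguishes cases A and B according to whether removing this edge disconnects the graph (your ``bridge'' dichotomy). The paper's proof is three sentences and omits all the bookkeeping you flag in your last paragraph, so your version is simply a more detailed rendering of the same argument.
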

\begin{proof}
Look at the face graph of the orbigon. The arc $t(\xi)=h(\eta)$ will correspond to an edge in this graph. If the graph remains connected after removing this edge then this new graph will be the graph of an orbigon that can be folded to the old orbigon and we are in situation B. Otherwise the old orbigon is stitched together from two smaller orbigons and we are in situation A.
\end{proof}

\begin{lemma}\label{finiteorbigons}
For a given type and reduced sequence there are at most a finite number of orbigons (up to equivalence). 
\end{lemma}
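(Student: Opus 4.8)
The plan is to bound, purely in terms of the type and the reduced sequence, the number of faces that can occur in an orbigon, and then to count face graphs. By the equivalence lemma above, an orbigon is determined up to equivalence by its face graph: a finite connected planar graph whose nodes are labelled by faces (elements of the finite set $F$), whose edges are labelled by arcs (elements of the finite set $\cA$), and whose bounded regions are labelled by the orbifold points constituting the type. For any $B$ there are only finitely many such labelled planar graphs with at most $B$ nodes and at most $B$ edges, so it suffices to bound the number $V$ of nodes of the face graph; the number of edges is then controlled via Euler's formula, which for a connected planar graph with $s$ bounded regions gives $E = V + s - 1$, where $s$ is the size of the type multiset.

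To bound $V$, I will use an arrow-count identity. Let $N$ be the number of indecomposable angle arrows occurring in the full, bracketed angle sequence of the orbigon. Along the inductive construction of orbigons, $N$ is conservative: a basic tree-gon is a single face, whose sequence is the cyclic list of its $n \ge 3$ corner angles; the stitching rule merely replaces two pairs of angles by two composite entries, creating or destroying nothing; and the folding operation only inserts brackets and merges two adjacent entries. Hence $N = \sum_i n_i \ge 3V$, where the sum runs over the faces of the orbigon and $n_i \ge 3$ (by [NMD]) is the number of sides of the $i$-th face. On the other hand, the angle arrows of the full sequence lying outside all brackets are precisely those of the reduced sequence, say $L$ of them; and for each bracket $[U]$ — of which there is exactly one for each element $(m,r)$ of the type multiset — the angle arrows inside $[U]$ but outside every bracket nested within it form the reduced sequence of $U$, which by definition is $\ell_m^r$ and therefore consists of $r\,\delta_m$ angle arrows, $\delta_m$ being the number of angle arrows around $m$. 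Since every angle arrow lying inside some bracket lies inside a unique innermost one, these sets partition the bracketed arrows, so
\[
N \;=\; L \;+\!\! \sum_{(m,r)\,\in\,\mathrm{type}}\!\! r\,\delta_m .
\]

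Combining the two estimates gives $3V \le L + \sum_{(m,r)} r\,\delta_m$, whose right-hand side depends only on the given type and reduced sequence; this bounds $V$, and the first paragraph then concludes. The main obstacle is the conservativity bookkeeping: one must check by induction along the definition of orbigons — with base case a single face — that stitching and folding neither create nor destroy indecomposable angle arrows, and that the brackets are in bijection with the orbifold points of the type, each contributing its own $\ell_m^r$. This requires some care because the number of \emph{entries} of the angle sequence is not additive under stitching (only the number of indecomposable arrows is) and because brackets may be nested. One could instead organise the argument as a double induction via Lemma~\ref{2possibilities}: whenever the reduced sequence has a composite entry $\xi\eta$, an orbigon realising it either splits uniquely as a stitching of two orbigons with strictly smaller arrow-count, or is obtained uniquely by folding an orbigon with one fewer orbifold point, and one ranges over the finitely many splitting points and the finitely many orbifold points of the type; the remaining base cases — reduced sequences with no composite entry — are single faces.
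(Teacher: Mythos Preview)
Your argument is correct. It is, however, considerably more elaborate than what the paper does. The paper's proof is a one-line separator count: each folding operation turns exactly two commas into a pair of brackets and adds one element to the type multiset, while the number of entries of the bracketed sequence is unchanged; hence the number of entries of the unreduced sequence equals the length of the reduced sequence plus twice the size of the type, and this bounds everything. You instead pass through the face-graph classification, count indecomposable angle arrows via the identity $N=L+\sum_{(m,r)} r\,\delta_m$, combine with $N=\sum_i n_i\ge 3V$ from [NMD] to bound the number $V$ of faces, and then finish by finiteness of labelled planar graphs of bounded size.

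Both are counting arguments and your identity is essentially a refinement of the paper's (you count indecomposable arrows rather than entries), but your route buys you two things: an explicit numerical bound on $V$ in terms of the data, and a clean separation of the combinatorial part (face graphs) from the bookkeeping part (arrow conservation under stitching and folding). The paper's argument is shorter but leaves implicit why a bound on the number of entries suffices; strictly speaking one still needs that the individual angle-path entries are determined up to finitely many choices by the reduced sequence and the $\ell_m^r$'s, which your arrow count handles automatically. Your alternative induction via Lemma~\ref{2possibilities} is also valid, and your observation that a reduced sequence with no composite entry forces the orbigon to be a single face is correct (any stitch or fold produces a composite entry in the reduced sequence).
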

\begin{proof}
Each bracketing changes two commas in brackets and introduces an extra pair in the multiset of the type. Therefore the length of the unreduced sequence (the number of commas and brackets) is $2$ times the size of the multiset longer than the unreduced sequence (the number of commas). 
\end{proof}

\begin{lemma}\label{cycleconditions}
If the arc collection satisfies
\begin{itemize}
    \item the no monogon or digon condition [NMD] then all tree-gons have length at least $3$.
    \item the no loops or two-cycles condition [NL2] then all orbigons have reduced sequences of length at least 3.
\end{itemize}
\end{lemma}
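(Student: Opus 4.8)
The plan for the first statement is a short induction on the number of stitchings used to build the tree-gon. A basic tree-gon is a face, which under [NMD] is an $n$-gon with $n\ge 3$, so its angle sequence has length $\ge 3$. For the inductive step, if a tree-gon is obtained by stitching two tree-gons of lengths $k$ and $l$, the stitching rule combines, on each side of the common arc, one entry of the first tree-gon with one entry of the second, so the new length is $k+l-2$; by the inductive hypothesis $k,l\ge 3$, hence $k+l-2\ge 4$. Nothing here is delicate. (Note that for tree-gons the geometric argument used below does not apply directly, since [NMD] alone does not exclude loop arcs.)

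For the second statement the idea is to read the reduced sequence geometrically and rule out reduced length $s\le 2$ by hand, so that no induction is needed. The reduced sequence $(\rho_s,\dots,\rho_1)$ of an orbigon describes the boundary of the underlying disk $D$: the preimages of $M$ cut $\partial D$ into $s$ edges $e_1,\dots,e_s$, each mapping onto an arc $a_i\in\cA$, separated by $s$ vertices $v_1,\dots,v_s$, where $v_i$ maps to the marked point $m_i:=m(\rho_i)$. If $s=1$, then $\partial D$ is a single loop-edge whose two ends are the unique vertex $v_1$, so both endpoints of $a_1=\phi(e_1)$ equal $\phi(v_1)$ and $a_1$ is a loop arc, contradicting [NL2]. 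If $s=2$, then $e_1$ runs from $v_1$ to $v_2$ and $e_2$ back; when $m_1=m_2$ the arc $a_1$ is again a loop arc, and when $m_1\ne m_2$ but $a_1\ne a_2$ the arcs $a_1,a_2$ are distinct arcs with the common endpoint pair $\{m_1,m_2\}$, a two-cycle; either way [NL2] fails.

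The remaining case, which I expect to be the main obstacle, is $s=2$ with $m_1\ne m_2$ and $a_1=a_2=:a$. Since $a$ is not a loop arc it has exactly one arc-end at each $m_i$, so each boundary angle $\rho_i$ must be a nonzero integral number of full turns around $m_i$. The plan is to pass to the face graph of this orbigon: the face of $D$ incident to $e_1$ and the face incident to $e_2$ are lifts of faces of $\cA$ having $a$ among their sides, and since the two boundary edges flanking the full-turn angle $\rho_1$ both map onto the single arc $a$, one of these faces of $\cA$ is forced to carry two consecutive sides equal to $a$ — that is, a loop in $Q_\cA$ — which [NL2] forbids. (Alternatively one may fold once more at $\rho_1$, obtaining an orbigon of reduced length $1$, already excluded by the previous paragraph; here one must check that this degenerate fold is legitimate, or that it leads to the same contradiction.) Once $s\le 2$ is excluded, every orbigon has reduced sequence of length $\ge 3$, and the proof is complete.
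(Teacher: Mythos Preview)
Your argument for the first item is correct and is exactly the paper's: faces have at least $3$ sides under [NMD], and stitching an $n$-gon with an $m$-gon yields an $(n+m-2)$-gon with $n+m-2\ge 4$.

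For the second item the paper writes only that ``an orbigon with a reduced sequence of length one or two would give rise to a loop or two-cycle of arcs''. You go further and correctly isolate the residual case $s=2$, $m_1\ne m_2$, $a_1=a_2=:a$, which is \emph{not} visibly a loop or a two-cycle. But your proposed resolution is flawed. You assert that one of the faces of $\cA$ adjacent to the boundary of the orbigon disk must carry two consecutive sides both equal to $a$. This is false: take the sphere with three marked points $m_1,m_2,m_3$, three arcs forming a triangle, and the two triangular faces $F_0,F_1$ (this satisfies [NL2]). Stitching $F_0$ and $F_1$ along the arc $m_1m_3$ gives a length-$4$ tree-gon one of whose corners is the full turn $\ell_{m_1}$; folding that corner produces an orbigon with reduced sequence $(\ell_{m_3},\ell_{m_2})$ of length $2$, both boundary edges equal to the arc $m_2m_3$. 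Each of $F_0,F_1$ contains that arc exactly once, so no face has two consecutive copies of it and your face-graph argument collapses. In fact this example shows that the case you worry about genuinely occurs, so neither your argument nor the paper's one-liner disposes of it. Your fallback (``fold once more at $\rho_1$ to reach reduced length $1$'') is also unavailable: for a cyclic sequence of length $2$ the rule $(\dots,U,\dots)\mapsto(\dots[U]\dots)$ has no clean meaning, and geometrically identifying the two remaining boundary edges would close the disk into a sphere rather than produce a smaller orbigon.
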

\begin{proof}
If [NMD] holds then all faces are at least 3-gons. Gluing an $n$-gon to an $m$-gon results in an $n+m-2$-gon, and $n+m-2>2$ if $n,m>2$.
Furthermore, an orbigon with a reduced sequence of length one or two would give rise to a loop or two-cycle of arcs.
\end{proof}

\subsection{$A_\infty$-structures on the gentle algebra.}

Fix a commutative ring $R$.
Recall that if $A$ is a $\Z$ or $\Z/2\Z$-graded projective $R$-module then a curved $A_\infty$-structure is a collection of $R$-linear maps
\[
\mu^k : A^{\otimes_R i} \to A
\]
of degree $2-k$, satisfying the curved $A_\infty$-axioms:
\begin{equation*}
\sum_{k ≥ l ≥ m ≥ 0} (-1)^{‖x_m‖ + … + ‖x_1‖} μ(x_k, …, μ(x_l, …,x_{m-1}), x_m, …, x_1) = 0.
\end{equation*}
Here $‖x‖$ is shorthand for the \emph{shifted degree}: $‖x‖=|x| - 1$. If $l=m$ then we interpret the middle $\mu()$ as the element $\mu^0 := \mu^0(1) \in A$. This is called the curvature and if it is zero the structure is called uncurved.

If $A$ is an algebra, we say that $\mu$ is an \emph{extension of the product} if $a \cdot b = (-1)^{\deg b}\mu^2(a,b)$. If $A=\C Q/I \otimes R$ comes from a path algebra of a quiver we will take tensor products over $\k \otimes R$ instead of over $R$ and we ask that the vertex idempotents $\id_v$ are \emph{strict}: all  products $\mu^{\ne 2}$ for which one of the entries is such an idempotent are zero.

Now let $R$ be a local nilpotent or complete ring over $\C$ with maximal ideal $R^+$.
Fix an element $r \in Z(\Gtl_\cA) \htensor R^+$, where $ \htensor $ denotes the $ R^+ $-adic completed tensor product. We will write this element as
\[
r_0 + \sum_m r_m(\ell_m) 
\]
where $r_0 \in R^+$, the $r_m(t) \in R^+⟦t⟧$ are $ R^+ $-valued power series without constant term such that the reductions $ r_m ∈ (R^+ / (R^+)^k) ⟦t⟧ $ are actually polynomials for every $ k ≥ 1 $. We will write the $j^{th}$ coefficient of $r_m(t)$ as $r_p$ where $p=(m,j)$ is viewed as an orbifold point. We will also write $\ell_p$ as shorthand for $\ell_m^j$. We are now ready to define a family of extensions of the gentle algebra.

\begin{definition}
For any $r \in Z(\Gtl_\cA) \htensor R^+$ we define $\rmu_\bullet$ on $\Gtl_\cA\htensor R$ as follows
\begin{itemize}
\item For each orbifold point $p=(m,j)$ we define a nullary product
\[
\rmu^0_p: \k \to \Gtl_\cA\htensor R^+ : 1 \mapsto r_p\ell_m^j 
\]
\item  For each orbigon $\psi$ with reduced sequence $(\alpha_k,\dots,\alpha_1)$ of length $k$ we define a $k$-ary product $\rmu_\psi$.
If $\beta$ and $\gamma$ are angles such that $\beta\alpha_{k+i} \ne 0$ and $\alpha_i\gamma \ne 0$ then we set
\[
\rmu_\psi(\beta\alpha_{k+i},\dots,\alpha_i) = r_{\psi} \beta \text{ and }\rmu_\psi(\alpha_{k+i},\dots,\alpha_i\gamma) = (-1)^{|\gamma|} r_{\psi} \gamma.
\]
where $r_\psi$ is the product of all $r_p$ with $p$ running over the orbifold points in the type of $\psi$. If the type is empty (i.e. $\psi$ is a tree-gon) then we put $r_\psi=1$.
All other $\rmu_\psi(\beta_1,\dots,\beta_n)$ where the $\beta_i$ are angles are zero. 
\end{itemize}
The total product is the sum of all these products together with an overall curvature coming from $r_0$:
\begin{equation*}\begin{split}
\rmu^0 &= r_0 \mu^0_\id + \sum_p \rmu_p\\
\rmu^1 &= \sum_{|\psi|=1} \rmu_\psi\\
\rmu^2 &= \rmu^2_{\ord} + \sum_{|\psi|=2} \rmu_\psi\\
\rmu^k &= \sum_{|\psi|=k} \rmu_\psi.
\end{split}
\end{equation*}
where $\mu^0_\id$ is the nullary product with $\mu^0_\id(1)=1$ and $\rmu^2_{\ord}$ the $R$-linear extension of the ordinary product in $\Gtl_\cA$. This is well defined because by lemma \ref{finiteorbigons} there are only a finite number of $k$-orbigons for each type and $R$ is nilpotent or complete.
\end{definition}
\begin{remark}
The condition [NMD] implies that there are no tree-gons with length $1$ or $2$. This means that $\rmu^1 =0 \mod R^+$ and $\rmu^2= \mu^2_{\Gtl_{\cA}} \mod R^+$. 
The condition [NL2] implies that there are no orbigons with length $1$ or $2$. This means that $\rmu^1 =0$ and $\rmu^2= \rmu^2_{\ord}$. 
\end{remark}

\begin{remark}
If $r=0$, the only $\rmu_\psi$ that contribute are those coming from tree-gons and each contributes with a factor $r_\psi=1$. These are precisely those that correspond to immersed disks that do not cover marked points with their interior. Therefore the definition is equivalent to the definition in \cite{HKK}.
In \cite{Bocklandt} the definition of the higher product is given inductively, with an induction step that is analogous to the induction step we used to define a tree-gon. Hence the definition is also equivalent to the definition in \cite{Bocklandt}. 
From both papers we can conclude that $\rmu$ is an uncurved $A_\infty$-structure if $r=0$. If $r\ne 0$ then $\rmu$ will be a curved deformation of this uncurved $A_\infty$-structure. 
\end{remark}

\begin{remark}
Each higher product removes a sequence of angles that forms an orbigon. If the angle that remains comes from the first entry, we will call that the \emph{front of the product}, if it comes from the last we call it the \emph{back of the product}.
\end{remark}

\begin{proposition}\label{satisfies-curved-axioms}If [NL2] holds then
$\rmu$ is a well defined curved $A_\infty$-structure on $\Gtl_\cA \htensor R$, which is strict over $\k=\C^{\cA}$.
\end{proposition}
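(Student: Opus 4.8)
The plan is to verify the curved $A_\infty$-axioms directly, organizing the computation by what kind of output term one is looking at. Fix angle paths (monomials in $\Gtl_\cA\htensor R$) as inputs; by $\k$-bilinearity and strictness of the vertex idempotents it suffices to check the axiom on tuples of nontrivial angle paths $(x_n,\dots,x_1)$. Each term in the curved $A_\infty$-expression is of the form $\rmu_\psi(x_n,\dots,\rmu_\phi(x_l,\dots,x_m),\dots,x_1)$ where $\psi,\phi$ are orbigons (or one of them is a nullary $\rmu_p$ or $\mu^0_\id$, or is the ordinary product $\rmu^2_{\ord}$). By the definition of $\rmu_\psi$, such a term is nonzero only when the inner output is again a single angle path that combines with the neighbouring inputs into the reduced sequence of the outer orbigon $\psi$; so the nonzero terms are indexed by an orbigon $\psi$ together with a choice of consecutive sub-block of its reduced sequence that itself is (essentially) the reduced sequence of a sub-orbigon $\phi$, up to the stitching/folding of a single arc. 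The strategy is then: for a fixed ``total orbigon shape'' appearing in the output, enumerate all the ways it decomposes into $(\psi,\phi)$, and show the corresponding signed terms cancel in pairs.

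The combinatorial heart is Lemma \ref{2possibilities}: given the reduced sequence of an orbigon and a splitting point $t(\xi)=h(\eta)$ inside it, the orbigon either stitches as two sub-orbigons over that arc (case A) or folds down from a larger orbigon over that arc (case B), and in each case the decomposition is unique. This is exactly the dichotomy that pairs up the terms: a ``case A'' decomposition of a given output against an adjacent ``case B'' decomposition (or an interaction with $\rmu^2_{\ord}$, or with a nullary $\rmu_p$ coming from $\ell_m^j$) produces two terms of the $A_\infty$-expression with opposite sign. Concretely I would (i) first dispose of the uncurved case $r=0$ by invoking the remark that $\rmu|_{r=0}$ agrees with the structures of \cite{Bocklandt,HKK}, which are known to be $A_\infty$; (ii) then argue that turning on $r$ multiplies each $\rmu_\psi$ by the scalar $r_\psi=\prod_{p}r_p$, and that in the $A_\infty$-expression the two terms being matched always carry the \emph{same} product of $r_p$'s — because stitching/folding an arc does not change the multiset of orbifold points, while a fold that introduces a new bracket at an orbifold point $p$ is precisely balanced against the nullary product $\rmu^0_p(1)=r_p\ell_m^j$ contributing the same factor $r_p$. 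Hence the signed sum over decompositions of a fixed output shape is $r_\psi$ times the corresponding sum in the $r=0$ (or purely combinatorial) case, which vanishes. The signs are handled by the Koszul sign rule built into the axiom together with the sign $(-1)^{|\gamma|}$ in the ``back'' products; one checks that shifting a bracket (the shift rule) or swapping a case-A/case-B pair flips exactly one Koszul sign.

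Along the way I would record the bookkeeping points: well-definedness of $\rmu$ (finiteness of orbigons of each length and type, Lemma \ref{finiteorbigons}, plus $R$ nilpotent/complete) so that all the infinite sums make sense $R^+$-adically; the fact that under [NL2] there are no orbigons of reduced length $1$ or $2$ (Lemma \ref{cycleconditions}), so $\rmu^1=0$ and $\rmu^2=\rmu^2_{\ord}$, which simplifies the low-arity axioms considerably and is what makes the structure genuinely ``an extension of the product''; strictness over $\k$, which is immediate because every $\rmu_\psi$ is defined only on tuples of nontrivial angle paths and outputs a nontrivial angle path or a central element, never involving a vertex idempotent; and independence of the chosen equivalence-class representative of each orbigon, which follows because the shift rule preserves both the reduced sequence and the type. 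The main obstacle I expect is purely the sign analysis: matching the Koszul signs $(-1)^{\|x_m\|+\dots+\|x_1\|}$ in the axiom against the $(-1)^{|\gamma|}$ in the back-products and the degree shifts $|\alpha^\vee|=1-|\alpha|$ hidden in how $\Z_2$-degrees propagate through stitching. I would isolate this into a single sign lemma comparing the two members of each cancelling pair, and everything else is the clean combinatorics of Lemmas \ref{2possibilities}–\ref{cycleconditions}.
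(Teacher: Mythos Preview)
Your overall strategy --- reduce to tuples of nontrivial angle paths by strictness, then pair up the nonzero double products using the stitch/fold dichotomy of Lemma~\ref{2possibilities} --- is exactly the approach the paper takes. The paper organises the pairing by fixing the input sequence $(\gamma_r,\dots,\gamma_1)$ and running through all triples $(u,v,i)$ of (outer type, inner type, insertion position), producing a finite list of geometric configurations (labelled O1--O4, A1L/R/M, A2, B1, B2, AA, AB, BA, BB in the paper) and exhibiting the unique partner of each.

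Where your plan has a genuine gap is the shortcut (i)+(ii). It is true that whenever two terms cancel they carry the same $r$-coefficient: a fold at $p$ contributes a factor $r_p$, as does the nullary $\rmu^0_p$ it is supposed to balance, and a stitched pair $(\psi_1,\psi_2)$ has $r_{\psi_1}r_{\psi_2}=r_\psi$. But this coefficient observation does not by itself establish the \emph{existence} of the partner term. At $r=0$ only tree-gons contribute, and the cancellations verified in \cite{Bocklandt,HKK} are precisely the A-type and O-type pairings among tree-gons. The B-type pairings --- a folded orbigon against a nullary $\rmu_p$ inserted in the interior (B1), against a $\rmu^2_{\ord}$ that reopens a fold (B2), or the boundary cases where the back of the outer product falls short and the remaining arc re-enters the orbigon, either cutting it (AA, BA) or opening a second fold (AB, BB) --- are all invisible at $r=0$ and are not covered by the references. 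As the paper itself remarks after the proof, these B-labelled cases are exactly the new content beyond \cite{HKK}. So you cannot ``factor out $r_\psi$ and reduce to the known $r=0$ sum'': the two sums have different index sets, and the extra terms must be paired off on their own. Once you drop the reduction and actually run the case analysis for the folded configurations (which is what your one-sentence ``a fold \dots\ is precisely balanced against the nullary product'' is gesturing at, but does not carry out), your plan coincides with the paper's proof. The sign bookkeeping you flag as the main obstacle is indeed routine, and the paper leaves it largely implicit as well.
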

\begin{proof}
We need to check the curved $A_\infty$-axioms. The first two axioms 
\[
\rmu^1(\rmu^0(1))=0,~~\rmu^1(\rmu^1(\alpha) + \rmu^2(\rmu^0(1),\alpha) - \rmu^2(\alpha,\rmu^0(1))=0
\]
follow easily from the facts that $\rmu^0(1)$ is a central element and $\rmu^1=0$. 
The third axiom holds because $\rmu_{\ord}$ is associative and there are no $\rmu_3(\dots,\rmu_0(1),\dots)$ terms as this would imply an orbigon with reduced sequence $(\dots[\ell_m^k]\dots)$ of length $1$. This contradicts lemma \ref{cycleconditions}.

To show the higher axioms, first note that by construction all the $\rmu$ are strict over $\k$, so we only need to check the axioms when all entries are nontrivial paths. 
Fix an angle sequence $(\gamma_r,\dots, \gamma_1)$ of length $\ge 4$ and assume that there is a nonzero double product 
\[
\rmu_u(\gamma_{s+t-1},\dots,\gamma_{{i+t}},\rmu_v(\gamma_{i+t-1},\dots,\gamma_{i}),\gamma_{i-1},\dots,\gamma_1)
\]
where $\rmu_u$ can be a $\rmu_{\ord}$ (if $s=2$) or a $\rmu_\psi$ if $s>2$, and $\rmu_v$ can be a $\rmu_p$ (if $t=0$), a $\rmu_{\ord}$ (if $t=2$) or a $\rmu_\psi$ (if $t>2$). Such a product is uniquely characterized by the triple $(u,v,i)$.

Each nonzero double product is a path multiplied with a factor $\pm r_ur_v$ (where we use the convention that $r_\ord=1$). We will show there is a unique other triple with a nonzero double product cancel that cancels it. 

To do this, we will go through all possible triples $(u,v,i)$ systematically, making a distinction between whether $i$ is $1$, $s$  or somewhere in the middle. For each case we draw a diagram of the possible situations and every situation will occur exactly twice (see figure \ref{fig:cases}). Note that most diagrams have two versions depending on whether the (only/outer) higher product has a front or a back. We will only consider the latter cases.
The diagrams are named after the operations of type $A$, $B$ from lemma \ref{2possibilities} that can be performed on the orbigons to stitch or fold them together.

\begin{enumerate}
\item $(\psi,p,i)$ 
\begin{itemize}
\item If $i=1$ then $\ell_p$ contains the back of $\rmu_\psi$ and we are in situation $O1$. We can compensate this term with a term of a similar type but where $\ell_p$ contains the front:
\[
    \rmu_{\psi}(\gamma_s,\dots,\gamma_1, \rmu_p) \text{ cancels with } \rmu_{\psi}(\rmu_p,\gamma_s,\dots, \gamma_1)
    \]
 \item If $1<i<s$ then $\psi$ has  $\ell_p$ as one of its angles and we are in situation $B1$. We can fold the orbigon together to form a new orbigon $\psi'$ with $r_{\psi'}=r_\psi r_p$. The other term that cancels it is of the form $(\psi',\ord,i-1)$.
  
   \item If $i=s$ and $\ell_p$ comes before $\gamma_{s}$ then either $\gamma_s$ is shorter than the back of $\rmu_\psi$ or longer. If it is shorter, we are in situation $B2$. If it is longer than the back, the arc $t(\gamma_1)$ will lie inside $\psi$. In that case we distinguish type $BA$ if $t(\gamma_1)$ cuts $\psi$ in two pieces, or $BB$ if it opens a second fold at a marked point $q$. The former is compensated by a terms of type $(\psi_1,\psi_2,i)$, while the latter is compensated by a term $(\psi_1,q,i)$. In both cases $\psi_1$ includes the orbifold point $p$ and has a front)
 \end{itemize}
    \item $(\psi,\ord,i)$
\begin{itemize}
\item If $1<i\le s$ then the inner product $\rmu^2(\gamma_{i+1},\gamma_i)$ is an angle    
    from the orbigon $\psi$ then $h(\gamma_i)$ either opens a fold of $\psi$ ($B1$) or it cuts the orbigon in two $(A1L,A1R,A1M)$. The first is compensated by a term of type $(\psi',p,i+1)$, while the latter three by a term of type $(\psi_1,\psi_2,j)$. 
    \item    
    If $i=1$ and $\gamma_1$ is part of the back of $\rmu_\psi$ then $h(\gamma_1)$ will cut the back in two. This is situation $O2$ and it is compensated by a term of the form $(\ord,\psi,2)$. 
    
    If the back of $\rmu_\psi$ is part of $\gamma_1$ then $h(\gamma_1)$ either opens a fold of $\psi$ or it cuts the orbigon in two. Just like when $i\ne 1$ above, we are in cases ($B1,A1L,A1R,A1M$) but now with $\gamma_1$ equal to the top of the hexagon instead of the bottom. They are also compensated by terms of type $(\psi',p,i+1)$ and $(\psi_1,\psi_2,j)$.
\end{itemize}    
    \item $(\ord,\psi,i)$
\begin{itemize}
    \item If $i=1$ and $\rmu^2(\gamma_{t+1},\rmu_{\psi}(\dots,\gamma_1))$ puts something in front of the back of $\rmu_{\psi}$, three things can happen. If $\gamma_{t+1}$ is shorter than the first angle of $\psi$ then $h(\gamma_{t+1})$ either cuts $\psi$ in two $(A2)$ or opens a fold of $\psi$ ($B2$). These cases are compensated by terms of the form $(\psi_1,\psi_2,j)$ and $(\psi',p,t)$. 
    
    If $\gamma_{t+1}$ is longer than the first angle we are in situation $O3$ and can compensate it with a term of the same type but with a head
    \[
    \rmu^2(\rmu_{\psi}(\gamma_{t+1},\dots,\gamma_2),\gamma_1).
    \]
    \item If $i=2$
    then $\rmu^2(\rmu_{\psi}(\dots,\gamma_2),\gamma_1)$ adds something to the back of $\rmu_\psi$ and we are in situation $O2$.
\end{itemize}
    \item $(\psi_1,\psi_2,i)$
\begin{itemize}
    \item If $i=1$ and $\rmu_{\psi_2}$ has a back then we are in $A1M$.  If $\rmu_{\psi_2}$ has a front we are in situation $O4$. Then we can commute the order of $\psi_1$ and $\psi_2$, this gives two terms that cancel:
    \[ \rmu_{\psi_1}(\gamma_{s+t-1},\dots,\rmu_{\psi_2}(\dots,\gamma_1))
\text{ and }
    \rmu_{\psi_2}(\rmu_{\psi_1}(\gamma_{s+t-1},\dots),\dots,\gamma_1).
    \]
   \item If $1<i<s$ and $\rmu_{\psi_2}$ has a back equal to an angle of $\psi_1$ we are in situation $A1L$, while if it has a front equal to an angle, we are in situation $A1R$.
     \item If $i=s$ and $\rmu_{\psi_2}$ has a front we are in $A1R$.
    If it has a back then it depends on the back of $\rmu_{\psi_1}$. If it is at least as long as the last angle of $\psi_2$ we are in situation $A2$. Otherwise $t(\gamma_1)$ will cut $\psi_2$ in two pieces (situation $AA$) or it will open a fold ($AB$). The former is compensated by a term of type $(\psi_1',\psi_2',j)$ while the latter is compensated by a term $(\psi_1',p,j)$. In both cases the outer product now has a front. Note that type $AB$ and $BA$ only seem to occur once in the list but this is because they are canceled by terms with a front and these are not in the list.
\end{itemize}
 \end{enumerate}

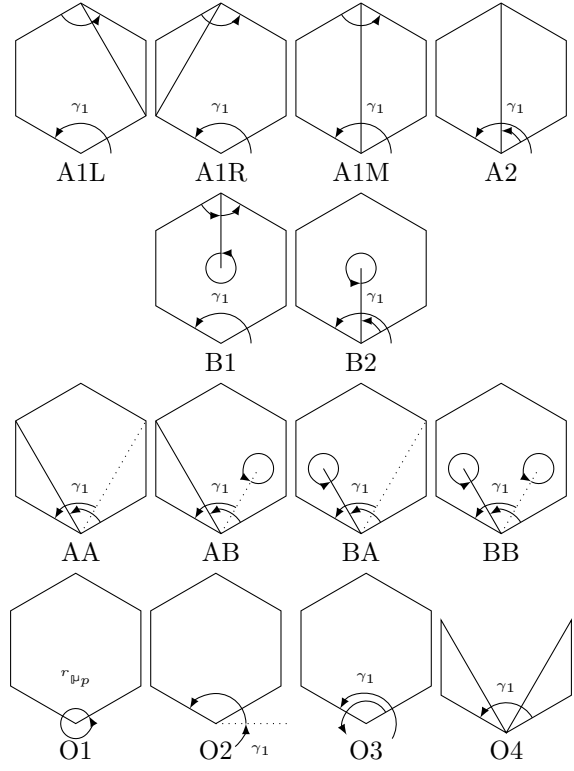
\begin{figure}
\begin{center}
\begin{tikzpicture}
\draw[yshift=1cm] (270:1)--(330:1)--(30:1)--(90:1)--(150:1)--(210:1)--cycle;
\draw[-latex] (0:.4) arc (0:150:.4); 
\draw[-latex,yshift=2cm] (210:.3) arc (210:330:.3);
\draw[yshift=1cm] (90:1)--(-30:1);
\draw(0,-.25) node{A1L};
\draw(0,.6) node{$\scriptscriptstyle{\gamma_1}$};
\end{tikzpicture}
\begin{tikzpicture}
\draw[yshift=1cm] (270:1)--(330:1)--(30:1)--(90:1)--(150:1)--(210:1)--cycle;
\draw[-latex] (0:.4) arc (0:150:.4); 
\draw[-latex,yshift=2cm] (210:.3) arc (210:330:.3);
\draw[yshift=1cm] (90:1)--(210:1);
\draw(0,-.25) node{A1R};
\draw(0,.6) node{$\scriptscriptstyle{\gamma_1}$};
\end{tikzpicture}
\begin{tikzpicture}
\draw[yshift=1cm] (270:1)--(330:1)--(30:1)--(90:1)--(150:1)--(210:1)--cycle;
\draw[-latex] (0:.4) arc (0:150:.4); 
\draw[-latex,yshift=2cm] (210:.3) arc (210:330:.3);
\draw[yshift=1cm] (90:1)--(270:1);
\draw(0,-.25) node{A1M};
\draw(.2,.6) node{$\scriptscriptstyle{\gamma_1}$};
\end{tikzpicture}
\begin{tikzpicture}
\draw[yshift=1cm] (270:1)--(330:1)--(30:1)--(90:1)--(150:1)--(210:1)--cycle;
\draw[-latex] (0:.4) arc (0:150:.4);
\draw[-latex] (30:.3) arc (30:90:.3);
\draw (0,0)--(0,2);
\draw(0,-.25) node{A2};
\draw(.2,.6) node{$\scriptscriptstyle{\gamma_1}$};

\end{tikzpicture}

\begin{tikzpicture}
\draw[yshift=1cm] (270:1)--(330:1)--(30:1)--(90:1)--(150:1)--(210:1)--cycle;
\draw[-latex] (0:.4) arc (0:150:.4); 
\draw[-latex,yshift=2cm] (210:.3) arc (210:270:.3);
\draw[-latex,yshift=2cm] (270:.3) arc (270:330:.3);
\draw[yshift=1cm] (90:1)--(0,0);
\draw[yshift=1cm,-latex] (90:.2) arc(90:450:.2);
\draw(0,-.25) node{B1};
\draw(0,.6) node{$\scriptscriptstyle{\gamma_1}$};

\end{tikzpicture}
\begin{tikzpicture}
\draw[yshift=1cm] (270:1)--(330:1)--(30:1)--(90:1)--(150:1)--(210:1)--cycle;
\draw[-latex] (0:.4) arc (0:150:.4);
\draw[-latex] (30:.3) arc (30:90:.3); 
\draw[yshift=1cm] (-90:1)--(0,0);
\draw[yshift=1cm,-latex] (-90:.2) arc (-90:270:.2);
\draw(0,-.25) node{B2};
\draw(.2,.6) node{$\scriptscriptstyle{\gamma_1}$};

\end{tikzpicture}

\begin{tikzpicture}
\draw[yshift=1cm] (270:1)--(330:1)--(30:1)--(90:1)--(150:1)--(210:1)--cycle;
\draw[yshift=1cm,dotted] (270:1)--(30:1);
\draw[yshift=1cm] (270:1)--(150:1);
\draw[-latex] (60:.4) arc (60:150:.4);
\draw[-latex] (30:.3) arc (30:120:.3);
\draw(0,.6) node{$\scriptscriptstyle{\gamma_1}$};
\draw(0,-.25) node{AA};
\end{tikzpicture}
\begin{tikzpicture}
\draw[yshift=1cm] (270:1)--(330:1)--(30:1)--(90:1)--(150:1)--(210:1)--cycle;
\draw[-latex] (60:.4) arc (60:150:.4); 
\draw[-latex] (30:.3) arc (30:120:.3);
\draw[yshift=1cm] (270:1)--(150:1);
\begin{scope}[rotate=-30]
\draw[yshift=1cm,dotted ] (-90:1)--(0,0);
\draw[yshift=1cm,-latex] (-90:.2) arc (-90:270:.2);
\end{scope}
\draw(0,-.25) node{AB};
\draw(0,.6) node{$\scriptscriptstyle{\gamma_1}$};
\end{tikzpicture}
\begin{tikzpicture}
\draw[yshift=1cm] (270:1)--(330:1)--(30:1)--(90:1)--(150:1)--(210:1)--cycle;
\draw[-latex] (60:.4) arc (60:150:.4); 
\draw[-latex] (30:.3) arc (30:120:.3);
\begin{scope}[rotate=30]
\draw[yshift=1cm] (-90:1)--(0,0);
\draw[yshift=1cm,-latex] (-90:.2) arc (-90:270:.2);
\end{scope}
\draw[yshift=1cm,dotted] (-90:1)--(30:1);
\draw(0,-.25) node{BA};
\draw(0,.6) node{$\scriptscriptstyle{\gamma_1}$};
\end{tikzpicture}
\begin{tikzpicture}
\draw[yshift=1cm] (270:1)--(330:1)--(30:1)--(90:1)--(150:1)--(210:1)--cycle;
\draw[-latex] (60:.4) arc (60:150:.4); 
\draw[-latex] (30:.3) arc (30:120:.3);
\begin{scope}[rotate=30]
\draw[yshift=1cm] (-90:1)--(0,0);
\draw[yshift=1cm,-latex] (-90:.2) arc (-90:270:.2);
\end{scope}
\begin{scope}[rotate=-30]
\draw[yshift=1cm,dotted ] (-90:1)--(0,0);
\draw[yshift=1cm,-latex] (-90:.2) arc (-90:270:.2);
\end{scope}
\draw(0,.6) node{$\scriptscriptstyle{\gamma_1}$};

\draw(0,-.25) node{BB};
\end{tikzpicture}

\begin{tikzpicture}
\draw[yshift=1cm] (270:1)--(330:1)--(30:1)--(90:1)--(150:1)--(210:1)--cycle;
\draw[-latex] (30:.2) arc (30:390:.2); 
\draw(0,-.35) node{O1};
\draw(0,.6) node{$\scriptscriptstyle{\rmu_p}$};
\end{tikzpicture}
\begin{tikzpicture}
\draw[yshift=1cm] (270:1)--(330:1)--(30:1)--(90:1)--(150:1)--(210:1)--cycle;
\draw[-latex] (-50:.4) arc (-50:0:.4);
\draw[-latex] (0:.4) arc (0:150:.4);
\draw[dotted] (0:0)--(0:1);
\draw(0,-.35) node{O2};
\draw(.6,-.35) node{$\scriptscriptstyle{\gamma_1}$};
\end{tikzpicture}
\begin{tikzpicture}
\draw[yshift=1cm] (270:1)--(330:1)--(30:1)--(90:1)--(150:1)--(210:1)--cycle;
\draw[-latex] (-30:.4) arc (-30:150:.4);
\draw[-latex] (30:.3) arc (30:210:.3);
\draw(0,-.35) node{O3};
\draw(0,.6) node{$\scriptscriptstyle{\gamma_1}$};
\end{tikzpicture}
\begin{tikzpicture}
\draw[yshift=1cm] (270:1)--(330:1)--(30:1)--(-90:1)--(150:1)--(210:1)--cycle;
\draw[-latex] (30:.4) arc (30:150:.4);
\draw(0,.6) node{$\scriptscriptstyle{\gamma_1}$};
\draw(0,-.23) node{O4};
\end{tikzpicture}
 \caption{The possible diagrams that can occur in as double products}
  \label{fig:cases}
\end{center}
\end{figure}
\end{proof}

\begin{remark}
The proof is analogous to the one in \cite{HKK}, but because of the internal orbifold points extra cases needed to be considered. These are all the cases with a $B$ in their label. 
The proof can also be extended to the case where [NL2] does not hold but this will include a lot more extra cases.
\end{remark}

\begin{remark}\label{Koszul1}
It is also possible to weight the faces in the orbigons using elements in $R^+$. Choose an element $s_{(f,i)} \in R^+$ for each face $f$ and $i \in \N$. The weight of a tree-gon is then defined inductively by giving $(\alpha_{ik},\dots,\alpha_1)$ weight $s_{(f,i)}$ if the angle sequence turns $i$ times around $f$ (note that this definition allows for orbifold faces as well). If you stitch two tree-gons together their weight is defined as the product. The folding operation adds an additional factor $r_{(m,j)}$ in the same way as before.

This gives a family of products ${}^{r,s}\bbmu^\bullet$ on $\Gtl_\cA \htensor R$ that depends on two parameters sets $r: M\times \N \to R^+$ and $s : M \times \N \to R^+$. Note that the curvature only depends on $r$ not on $s$.

This same parameter set can be used for the Koszul dual $\Gtl_{\cA^\perp}$ but now the role of $r$ and $s$ are reversed because for the Koszul dual the role of the marked points and the faces are swapped. In this case $s$ will give rise to curvature, while $r$ only contributes to the higher products. Note however that if $s_{(f,j)}$ is nonzero for an infinite number of $(f,j)$ we should go to the completed version ${\widehat{\Gtl}}_{\cA^\perp}$ for the curvature to make sense.
\end{remark}

\section{Hochschild Cohomology of Gentle $A_\infty$-algebras}

\subsection{Definitions}

\begin{definition}
If $(A,\mu)$ is a $\Z$- or $\Z_2$-graded $A_\infty$-algebra over a semisimple algebra $\k$, we define the $A_\infty$-Hochschild complex as
\[
 \HC^\bullet(A) = \Hom_\k(\bigoplus_{i\ge 0} A[1]^{ \otimes_\k i},A[1]) 
\]
Every element $\nu \in \HC^j(A)$ can be seen as a collection of $n$-ary products $\nu^n$ of degree $1+j-n$, one for each $n \in \Z_{\ge 0}$. We will call $\nu^n$ the $n$th component of $\nu$.
\end{definition}

\begin{remark}
Note that the grading on this space is not the classical grading on Hochschild cohomology coming from the number of entries in the products, but the one coming from the degrees of the maps $A[1]^{ \otimes_\k i} \to A[1]$. For this grading the product $\mu$ can be seen as an element of $\HC^1(A)$. To avoid confusion we will call this degree the \emph{$\infty$-degree} $ ‖\bullet‖ $. If we work $\Z_2$-graded we will also refer to this grading as the \emph{parity} and denote the homogeneous parts as  $\HC^{\even}(A)$ and  $\HC^{\odd}(A)$.
\end{remark}

\begin{itemize}
\item
On $\HC^\bullet(A)$ we have a bracket of degree $0$:
\begin{equation*}\begin{split}
[\kappa,\nu](a_r,\dots,a_1) :=
& \sum_{0\le i \le j \le r} (-1)^{(‖a_1‖ + … + ‖a_i‖)‖\nu‖}\kappa(a_r,\dots,a_{j+1},\nu(a_j,\dots,a_{i+1}),a_i,\dots, a_1)\\
&- (-1)^{‖ν‖‖κ‖ + (‖a_1‖ + … + ‖a_i‖)‖\kappa‖}\sum_{0\le i \le j \le r}\nu(a_r,\dots,a_{j+1},\kappa(a_j,\dots,a_{i+1}),a_i,\dots, a_1).
\end{split}\end{equation*}

The $A_\infty$-axioms for the product $\mu$, can be rephrased as $[\mu,\mu]=0$, which also means that $d =[\mu,-]$ is a differential of degree $1$ and the triplet
\[
(\HC^\bullet(A), d, [,])
\]
is a differential graded Lie algebra (DGLA). The solutions to the Maurer-Cartan equation
\[
d \nu +\frac{1}{2} [\nu,\nu]=0
\]
describe the deformations of $\mu$ as a curved $A_\infty$-structure.

\item
There is also a second product on $\HC^\bullet(A)$: the cup product.
\begin{equation*}\begin{split}
(\kappa \smile \nu) (a_r,\dots a_1) := \sum_{0\le i \le j \le u \le v \le r}
  (-1)^{\maltese}
 \mu(a_r,\dots,\kappa(a_v,\dots,a_{u+1}),\dots,\nu(a_j,\dots,a_{i+1}),\dots,a_1) 
\end{split}\end{equation*}

with $\maltese = {(‖a_1‖ + … + ‖a_u‖)‖\kappa‖ + (‖a_1‖ + … + ‖a_i‖)‖\nu‖ + ‖ν‖ + 1}$.
This product has degree $1$ and together with $d$ it satisfies the graded Leibniz rule. Therefore 
\[
(\HC^\bullet(A)[-1], d, \smile)
\]
is a differential graded algebra DGA and if we go to homology the triplet
\[
(\HH^\bullet(A)[-1], \smile, [,])
\]
is a Gerstenhaber algebra. 
\end{itemize}

\begin{remark}
These two classical structures on Hochschild cohomology of ordinary ungraded algebras were analyzed first by Gerstenhaber \cite{gerstenhaber1963cohomology}. Both definitions have been extended to $ A_∞ $-algebras and are well-known in the literature, see e.g.~\cite{mescher2016primer}.

Mescher \cite{mescher2016primer} uses a different sign convention for $ A_∞ $-categories. This means his definition of the cup product also has different signs. We have adapted the signs in order to suit the signs in our definition of $ A_∞ $-categories. In particular, the cup product as defined is graded symmetric, with respect to a degree shift of $ 1 $ on the Hochschild cohomology:
\begin{equation}
\label{eq:hochschild-cup-symmetry}
ν \smile η = (-1)^{(‖ν‖ - 1)(‖η‖ - 1)} η \smile ν.
\end{equation}
The unexpected shift by 1 in this sign rule is desired. In fact, this “shifted graded symmetry” renders the cup product truly graded symmetric with respect to the “traditional” grading of the Hochschild complex, which differs from the $ A_∞ $-grading precisely by one. In other words, if one regards an ordinary ungraded algebra and starts grading the Hochschild cohomology at zero (instead of minus one), the cup product becomes graded symmetric.

We have tried to arrange the signs such that the cup product together with the Gerstenhaber bracket turns $ \HH(\Gtl_{\cA}) $ into a Gerstenhaber algebra with correct signs. In order to be a Gerstenhaber algebra, sign conventions of cup product and bracket need to be tuned to each other. The relevant compatiblity condition is the signed Leibniz rule which reads
\begin{equation*}
[ν, η \smile ω] = [ν, η] \smile ω + (-1)^{‖ν‖ (‖η‖ - 1)} η \smile [ν, ω].
\end{equation*}
We have tried to arrange the signs with the Leibniz rule in mind, but we have not conducted the tedious checks. We have however chosen the signs in analogy with \cite{mescher2016primer}. In particular, the signs are such that the cup product descends to Hochschild cohomology and is graded symmetric, the latter we did check by hand.
\end{remark}

\begin{remark}\label{strictidempotents}
If $(A,\mu)$ is a strictly unital $A_\infty$-algebra over $\k$, i.e. $\mu(x_1,\dots,x_n)=0$ if $n\ne 2$ and at least one of the $x_i \in \k$, one can construct the \emph{normalized Hochschild cochain complex} $\underline{\HC}^\bullet(A)$. This is the subspace of all cochains that evaluate to zero if at least one of the entries is in $\k$. This subspace is closed under all the operations above the embedding is a quasi-isomorphism \cite{keller2006infinity}.
The Maurer-Cartan equation for $\underline{\HC}^\bullet(A)$ classifies deformations of $\mu$ for which $\k$ remains strict.
\end{remark}

We will now specialize to the case where $A=\Gtl_\cA$ is the gentle algebra of an arc collection $\cA$ equipped with $\mu=\rmu$ for $r=0$. We have seen that the $\Gtl_\cA$ has a natural $\Z_2$-grading, so the $\infty$-degree is a $\Z_2$-degree, which we refer to as the parity. 
It is possible to lift this $\Z_2$-grading to a $\Z$-grading. To do this we assign to each angle $\alpha$ a degree $\deg \alpha \in \Z$ with the same parity as $|\alpha|$. In order to make this degree compatible with $\mu$ we have to ensure that $\mu$ has $\infty$-degree $1$. This happens only when the total degree of the angles in a $k$-gon is $k-2$. As every angle occurs only in one $k$-gon this can always be done but not canonically.  

The gentle $A_\infty$-algebra also has an extra grading coming from the relative first homology 
\[
G =H_1(S\setminus M,\cA ,\Z).
\]
This group can be described in terms of the angles and faces
\[
G = \frac{\bigoplus_{\alpha \in (Q_{\cA})_1} \Z\alpha}{\langle\alpha_1+\dots+ \alpha_k \mid (\alpha_1,\dots,\alpha_k) \text{ is a face}\rangle}.
\]
The group $G$ comes with two natural maps 
\begin{itemize}
    \item $\pi : G \to \Z^{\cA}: \alpha \mapsto h(\alpha)-t(\alpha)$
    \item $\iota : \Z^{M} \to G: m \mapsto \sum_{\alpha \in \ell_m} \alpha$
\end{itemize}
We have $\pi\circ \iota=0$. The image of the $\pi$ has corank $1$, while the kernel of $\iota$ is $(1,\dots,1)$.  Therefore if $\#M\ge 2$ the map $\iota$ is nonzero. 

If we grade $\Gtl_\cA$ by giving each angle its corresponding degree in $G$, it is clear that all products $\mu$ have degree $0 \in G$. We can transfer the $G$-grading to the Hochschild complex and it is easy to see
that it contains only nonzero elements for degrees in $\Ker \pi$. Furthermore the natural operations $[,]$, $\smile$ and $d$ all have $G$-degree $0$.

\begin{remark}
It is also clear from this construction that if $\deg$ and $\deg'$ are two different $\Z$-lifts of the $\Z_2$-grading then their difference factors through $G$ because $\deg$ and $\deg'$ assign the same degree to a face. More precisely, the possible $\Z$-gradings form a $\Hom(G,2\Z)$-torsor.
\end{remark}

\begin{lemma}\label{totaldegreebelowzero}
If $\deg$ is a $\Z$-lift of the $\Z_2$-degree on $\Gtl_\cA$ then
\[
\sum_{m \in M} \deg \ell_m = 4-4g-2\#M = 2\chi(S,M) < 0.
\]
Here $g$ is the genus of the marked surface $(S,M)$ and $\chi(S,M)$ its Euler characteristic (which is negative by assumption).
\end{lemma}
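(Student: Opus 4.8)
The plan is to evaluate $\sum_{m\in M}\deg\ell_m$ by turning it into a sum over \emph{all} angle arrows of $Q_\cA$, regrouping that sum over the faces $F$, and then reading off an Euler characteristic. The whole argument is a counting argument; the $\mathbb Z$-lift enters only through the degree constraint it places on the angles of each face.

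I would first observe that $\ell_m$ is homogeneous for the $\mathbb Z$-grading, of degree $\sum_{\alpha:m(\alpha)=m}\deg\alpha$: the $k_m$ cyclic rotations of $\alpha_1\cdots\alpha_{k_m}$ (the angles around $m$) are all nonzero paths — consecutive angles sharing a marked point compose nontrivially, unlike consecutive angles of a face — and each rotation has the same, order-independent, degree. Since every angle arrow turns around exactly one marked point, summing over $m\in M$ gives $\sum_{m\in M}\deg\ell_m=\sum_{\alpha}\deg\alpha$, the sum over all $2\#\cA$ angle arrows. Secondly, every angle arrow lies in exactly one face, so this sum equals $\sum_{f\in F}\big(\sum_{\alpha\in f}\deg\alpha\big)$; and the defining property of the $\mathbb Z$-lift, namely that $\mu$ has $\infty$-degree $1$ with $\mu^n$ of degree $2-n$, forces the degrees of the angles of an $n$-gon face to sum to $2-n$. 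Hence $\sum_{m\in M}\deg\ell_m=\sum_{f\in F}(2-n_f)=2\#F-\sum_{f\in F}n_f$, where $n_f$ is the number of sides of $f$, and since each arc borders two faces counted with multiplicity, $\sum_{f\in F}n_f=2\#\cA$, so $\sum_{m\in M}\deg\ell_m=2\#F-2\#\cA$.

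To conclude, I would apply Euler's formula for the cell decomposition of $S$ with vertices $M$, edges $\cA$, and $2$-cells $F$: $\#M-\#\cA+\#F=\chi(S)=2-2g$, which rearranges to $\#F-\#\cA=2-2g-\#M=\chi(S\setminus M)=\chi(S,M)$. Therefore $\sum_{m\in M}\deg\ell_m=2\chi(S,M)=4-4g-2\#M$, and negativity is immediate from the standing hypotheses: a sphere has $\#M\ge 3$ so $\chi(S,M)\le -1$, while in every other case $g\ge 1$ and $\#M\ge 1$ again give $\chi(S,M)\le -1$. I do not expect a serious obstacle, but the one point demanding care is fixing the sign of the per-face degree sum as $2-n_f$ rather than $n_f-2$; this is pinned down by the convention that $\mu^n$ has degree $2-n$ together with the explicit shape of the tree-gon products, and once it is fixed the rest is routine Euler-characteristic bookkeeping.
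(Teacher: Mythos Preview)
Your proof is correct and follows essentially the same approach as the paper: rewrite $\sum_m \deg \ell_m$ as the sum of $\deg\alpha$ over all angle arrows, regroup by faces using the per-face constraint $\sum_{\alpha\in f}\deg\alpha = 2 - n_f$, and then apply Euler's formula to obtain $2\#F - 2\#\cA = 4 - 4g - 2\#M$. The paper's proof is the same one-line computation, just with less commentary on why each step holds.
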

\begin{proof}
\begin{equation*}\begin{split}
\sum_{m \in M} \deg \ell_m &= \sum_{\alpha \in (Q_{\cA})_1} \deg \alpha = 
\sum_{f \in F} \deg f \\&= \sum_{f \in F} (2- \#\{\alpha \in f\}) = 2\# F - 2\#\cA = 2(2-2g)-2\# M.
\end{split}\end{equation*}
\end{proof}

\subsection{Reduction to the zeroth and first component}
\label{reductiontozeroandfirstcomponent}
\newcommand{\eps}{\epsilon}
\newcommand{\CI}{\operatorname{Contr}}

In this section we will show that the Hochschild cohomology class of a cocycle can be read off from the 0th and 1st component. In other words, if $\nu^0$ and $\nu^1$ are both zero then $\nu$ is a coboundary. Remark \ref{strictidempotents} allows us to restrict to cochains which belong to the normalized Hochschild cohomology. For such classes we will construct an $\eps \in \underline{\HC}^\bullet(A)$ such that $\nu - d \eps$ evaluates zero on all sequences $(\beta_k,\dots, \beta_1)$ where the $\beta_i$ are nontrivial angle paths and $t(\beta_i)=h(\beta_{i+1})$. We will often refer to the procedure of adding a coboundary as \emph{gauging}.

\begin{definition}
Let $\beta = (\beta_k,\dots,\beta_1)$ be a sequence of nontrivial angles with $h(\beta_i)=t(\beta_{i+1})$ for $i<k$ (we do not impose that it cycles so  $h(\beta_i)$, $t(\beta_{1}$ can be different).
\begin{itemize}
\item We call $\beta$ \emph{elementary} if all $\beta_i$ are indecomposable angles (angle arrows). 
\item An index $i$ is called a \emph{contractible index} if $\beta_{i+1}\beta_i\ne 0$.
\item 
Denote the set of contractible indices by $\CI(\beta)$ and if $S \subset \CI(\beta)$ then the \emph{contracted sequence} $\beta^S$ will be the sequence where consecutive angles separated by a contractible index are multiplied together. 
\item
Every angle sequence is of the form $\beta^S$ where $\beta$ is elementary and $S$ a set of contractible indices. Moreover $\beta$ and $S$ are uniquely determined.
Two sequences are of the \emph{same type} if they are contractions of the same elementary sequence.
\item
The \emph{total length} of a sequence will be the length of its underlying indecomposable sequence.
\item
For an elementary sequence we have that $\CI(\beta)=\{\}$ if and only if it consists of consecutive angles of a polygonal face. Therefore we will call such sequences \emph{polygon sequences}.
\end{itemize}
\end{definition}

\begin{lemma}
\label{th:hochschild-odd-nu2}
Let $ ν ∈ \Ker(d) \subset \underline{\HC}^\bullet(\Gtl_{
\cA})$ with (A) $\nu^0 = ν^1 = 0 $. Then $ν$ can be gauged to satisfy additionally that (B) $ν^2(\alpha,\beta)=0$ for all pairs of angle paths with $\alpha\beta\ne 0$. During the gauging procedure, the value of $ ν $ does not change on polygon sequences.
\end{lemma}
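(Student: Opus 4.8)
The plan is to realize the gauge by a cochain $\eps$ whose only nonzero component is $\eps^1$, chosen to vanish on every vertex idempotent and on every indecomposable angle arrow, and to determine $\eps^1$ on longer angle paths from $\nu^2$ by induction on their total length. First I would record the shape of $d\eps = [\mu,\eps]$ for such an $\eps$. Since $r = 0$ we have $\mu^0 = 0$, and [NMD] forces $\mu^1 = 0$ while $\mu^2 = \mu^2_{\ord}$ is the ordinary product of $\Gtl_\cA$; combined with $\eps^0 = \eps^{\ge 2} = 0$ this makes all double products in the bracket collapse, so that $(d\eps)^0 = (d\eps)^1 = 0$ and
\[
(d\eps)^2(\alpha,\beta) \;=\; \pm\,\eps^1(\mu^2(\alpha,\beta)) \;+\; \mu^2(\eps^1(\alpha),\beta) \;\pm\; \mu^2(\alpha,\eps^1(\beta)).
\]
Thus $\nu^0$ and $\nu^1$ are automatically unaffected, and (B) reduces to a condition on $\eps^1$ alone. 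For a polygon sequence $(\alpha_k,\dots,\alpha_1)$, whose entries are by definition angle arrows, the terms of $[\mu,\eps]^k$ in which $\eps^1$ eats one entry die because $\eps^1$ kills arrows, and the subproducts $\mu^j$ applied to fewer than $k$ consecutive angles of a face also vanish (consecutive angles of a face multiply to zero, and a proper piece of a face is not a tree-gon), leaving $(d\eps)^k(\alpha_k,\dots,\alpha_1) = \pm\,\eps^1(\mu^k(\alpha_k,\dots,\alpha_1))$; and $\mu^k$ of a polygon sequence is $0$ or a vertex idempotent $\id_a$, on both of which $\eps^1$ vanishes. Hence, once $\eps^1$ exists, $\nu - d\eps$ still satisfies (A), is unchanged on polygon sequences, and — since $\eps \in \underline{\HC}$ and $d$ preserves the normalized subcomplex (Remark \ref{strictidempotents}) — stays in $\underline{\HC}$.

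Next I would extract the identity that powers the induction. Evaluating $[\mu,\nu] = 0$ on three composable angle paths $\gamma_3,\gamma_2,\gamma_1$ and using $\nu^0 = \nu^1 = 0$ and $\mu^0 = \mu^1 = 0$, only four terms survive and yield
\[
\mu^2(\nu^2(\gamma_3,\gamma_2),\gamma_1) \pm \mu^2(\gamma_3,\nu^2(\gamma_2,\gamma_1)) \pm \nu^2(\gamma_3\gamma_2,\gamma_1) \pm \nu^2(\gamma_3,\gamma_2\gamma_1) = 0;
\]
that is, $\nu^2$, read on the basis of angle paths as a $\k$-bilinear map $\Gtl_\cA \times \Gtl_\cA \to \Gtl_\cA$, is a normalized Hochschild $2$-cocycle of the underlying associative algebra on composable triples — the only triples we will use.

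Then I would construct $\eps^1$. Put $\eps^1 = 0$ on idempotents and angle arrows; for an angle path $\gamma$ of total length $L \geq 2$, pick any factorization $\gamma = \alpha\beta$ into nontrivial angle paths (say, $\alpha$ the initial arrow and $\beta$ the rest) and set
\[
\eps^1(\gamma) \;:=\; \pm\Bigl(\nu^2(\alpha,\beta) - \mu^2(\eps^1(\alpha),\beta) \mp \mu^2(\alpha,\eps^1(\beta))\Bigr),
\]
which makes sense since $\alpha$ and $\beta$ are strictly shorter than $\gamma$. The substance is independence of the factorization: two factorizations $\gamma = \alpha_1\beta_1 = \alpha_2\beta_2$ with $\alpha_1$ a proper prefix of $\alpha_2$ satisfy $\alpha_2 = \alpha_1\zeta$, $\beta_1 = \zeta\beta_2$ for a nontrivial $\zeta$, and the general case follows by transitivity through the successive prefixes; for such a pair one substitutes the inductively valid formulas for $\eps^1(\alpha_1\zeta)$ and $\eps^1(\zeta\beta_2)$ into the two candidate values for $\eps^1(\gamma)$ and cancels, leaving (up to sign) exactly
\[
\nu^2(\alpha_1\zeta,\beta_2) - \nu^2(\alpha_1,\zeta\beta_2) - \nu^2(\alpha_1,\zeta)\beta_2 + \alpha_1\nu^2(\zeta,\beta_2),
\]
which is zero by the cocycle identity above. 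With $\eps^1$ in hand, $(d\eps)^2(\alpha,\beta) = \nu^2(\alpha,\beta)$ for every composable pair by construction, so $\nu - d\eps$ also satisfies (B).

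I expect the main obstacle to be purely the sign bookkeeping: arranging the Koszul signs in the Hochschild bracket, in the $A_\infty$ differential $[\mu,-]$, and in the convention $a\cdot b = (-1)^{\deg b}\mu^2(a,b)$ so that (i) the three surviving terms of $(d\eps)^2$ genuinely assemble into $\pm$ the associative-algebra Hochschild coboundary of $\eps^1$, and (ii) the cancellation in the well-definedness check really is the cocycle identity and not an off-by-a-sign near miss. Everything else — the collapse of double products once $\mu^0 = \mu^1 = 0$, the polygon-sequence computation, and the length induction — is formal.
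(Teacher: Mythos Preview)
Your proposal is correct and follows essentially the same approach as the paper: both construct a gauge $\eps$ with only an $\eps^1$-component, set $\eps^1$ to zero on indecomposable angles, extend inductively by the formula $\eps^1(\alpha\beta) = \eps^1(\alpha)\beta + \alpha\eps^1(\beta) \pm \nu^2(\alpha,\beta)$, verify well-definedness via the associative-algebra $2$-cocycle identity extracted from $(d\nu)^3 = 0$, and then observe that $(d\eps)$ vanishes on polygon sequences because $\eps^1$ kills indecomposable angles and the inner $\mu$ on a polygon sequence can only produce an identity. The paper's sign convention gives $\eps^1(\alpha\beta) = \eps^1(\alpha)\beta + \alpha\eps^1(\beta) - (-1)^{|\beta|}\nu^2(\alpha,\beta)$, which resolves the sign bookkeeping you flagged.
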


\begin{proof}

We will construct an $ ε = ε^1$ such that $ν^2(\alpha,\beta)=(dε)^2(\alpha,\beta)$ if $\alpha\beta\ne 0$. Set $ ε^1 (α) = 0 $ for every indecomposable angle $ α $. Now define inductively $ ε^1 $ for decomposable angles (angle paths) by the rule
\begin{equation}
\label{eq:hochschild-odd-nu2-eps}
ε^1 (αβ) = ε^1 (α) β + α ε^1 (β) - (-1)^{|β|} ν^2 (α, β).
\end{equation}
Let us check that extending $ ε^1 $ according to this rule is well-defined, that is, $ ε^1 ((αβ)γ) = ε^1 (α(βγ)) $. Indeed,
\begin{align*}
& \big(ε^1 (αβ) γ + αβ ε^1 (γ) - (-1)^{|γ|} ν^2 (αβ, γ)\big) - \big(ε^1 (α) βγ + α ε^1 (βγ) - (-1)^{|βγ|} ν^2 (α, βγ)\big) \\
&= ε^1 (α) βγ + α ε^1 (β) γ - (-1)^{|β|} ν^2 (α, β) γ + αβ ε^1 (γ) - (-1)^{|γ|} ν^2 (αβ, γ) \\
& ~ - ε^1 (α) βγ - α ε^1 (β) γ - αβ ε^1 (γ) + (-1)^{|γ|} α ν^2 (β, γ) + (-1)^{|βγ|} ν^2 (α, βγ) \\
&= - (-1)^{|β|} ν^2 (α, β) γ - (-1)^{|γ|} ν^2 (αβ, γ) + (-1)^{|γ|} α ν^2 (β, γ) + (-1)^{|βγ|} ν^2 (α, βγ) \\
&= (-1)^{|β|} (dν) (α, β, γ) = 0.
\end{align*}
In the final row we have used that $ ν^0 = ν^1 = 0 $. We conclude that for angles $ α $, $ β $ with $\alpha\beta\ne 0$ we have
\begin{equation*}
(dε) (α, β) = (-1)^{|β|} ε^1 (α) β + (-1)^{|β|} α ε^1 (β) - (-1)^{|β|} ε^1 (αβ) = ν^2 (α, β)
\end{equation*}
as desired. Furthermore $ (dε)^0 = (dε)^1 = 0 $ and if $(α_k, …, α_1)$ is a polygon sequence, then
\begin{equation*}
(dε) (α_k, …, α_1) = ε (μ(α_k, …, α_1)) + μ(…, ε(α_i), …) = 0.
\end{equation*}
because all $\alpha_i$ are indecomposable so $ε(α_i)=0$.
\end{proof}

\begin{lemma}
Let $ ν ∈ \Ker(d) \subset \underline{\HC}^\bullet(\Gtl_{
\cA})$ with (A) $ \nu^0 = ν^1 =0$ and (B) $ν^2(\alpha,\beta)=0$ if $\alpha\beta\ne 0$. Then $ ν $ can be gauged to (C) evaluate zero on polygonal sequences without affecting the conditions (A), (B).
\end{lemma}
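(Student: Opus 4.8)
The plan is to imitate the construction from the proof of Lemma \ref{th:hochschild-odd-nu2}, but one level up: we produce a normalized primitive $\eps$ whose component $\eps^{k-1}$ is supported on polygon sequences of total length $k-1$, assembled by induction on $k$, so that $\nu - d\eps$ vanishes on all polygon sequences while $d\eps$ leaves (A), (B) untouched.

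The key preliminary observation is that $\mu$ is extremely degenerate on polygon sequences. If $p=(\alpha_k,\dots,\alpha_1)$ is a polygon sequence then $\mu$ annihilates every consecutive sub-block of $p$: for the $r=0$ structure $\mu^0=\mu^1=0$, while $\mu^2$ of two consecutive angles of a face is the (vanishing) product of $\Gtl_{\cA}$, and for $\ell\ge 3$ the product $\mu^\ell$ fires only on a tuple of angle paths possessing a decomposable ``front'' or ``back'', which a run of indecomposable angles along a single face never has. Two consequences: for any cochain $\eps$ one has $(d\eps)(p)=(\mu\circ\eps)(p)$, since the $\eps\circ\mu$ half of $[\mu,\eps]$ feeds $\mu$ on blocks of $p$ and so vanishes by multilinearity; dually, the cocycle identity $d\nu=0$ collapses on $p$ to $(\mu\circ\nu)(p)=0$. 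Finally, under [NMD] there are no length-$2$ tree-gons (Lemma \ref{cycleconditions}), so on two entries $\mu$ is just the sign-twisted associative product.

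Now run the induction on the total length $k$, the case $k=1$ being (A). Having chosen $\eps_{(2)},\dots,\eps_{(k-1)}$ so that after gauging by their sum the cochain -- which we keep calling $\nu$ -- still satisfies (A), (B) and now vanishes on every polygon sequence of length $<k$, we must choose $\eps_{(k)}$, concentrated in Hochschild component $k-1$ and supported in length $k-1$. Because of these support constraints the only sub-blocks of a length-$k$ polygon sequence that $\eps_{(k)}$ sees are the two maximal proper segments, and each feeds into $\mu$ on two entries only; hence
\[
(d\eps_{(k)})(\alpha_k,\dots,\alpha_1)=\pm\,\alpha_k\cdot\eps_{(k)}(\alpha_{k-1},\dots,\alpha_1)\ \pm\ \eps_{(k)}(\alpha_k,\dots,\alpha_2)\cdot\alpha_1 ,
\]
and the same bound shows $d\eps_{(k)}$ vanishes identically on polygon sequences of length $<k$ and has trivial $0$th and $1$st component. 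So we are reduced to solving, simultaneously over all polygon sequences of length $k$,
\[
\pm\,\alpha_k\cdot\eps_{(k)}(\alpha_{k-1},\dots,\alpha_1)\ \pm\ \eps_{(k)}(\alpha_k,\dots,\alpha_2)\cdot\alpha_1\ =\ \nu^k(\alpha_k,\dots,\alpha_1)
\]
for unknowns $\eps_{(k)}$ on the polygon sequences of length $k-1$. This is a bar-type equation living inside a single face $F$: it asks that $\nu^k$, viewed as a function on the length-$k$ segments of $F$, be the coboundary under $\pm\alpha_k\cdot(-)\pm(-)\cdot\alpha_1$ of a function on the length-$(k-1)$ segments. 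Its right-hand side is integrable precisely because of the reduced cocycle identity $(\mu\circ\nu)(p')=0$ on a length-$(k+1)$ polygon sequence $p'$ -- once one checks, using the inductive hypothesis together with (A), (B), that the only surviving terms there are the two boundary $\mu^2$-products (and, at the very bottom of the induction, the pieces $\nu^2(\alpha_{i+1},\alpha_i)$ coming from consecutive-in-face pairs, which vanish by the base case). One then verifies that the relevant ``polygon bar cohomology'' vanishes in the internal degree at hand, so that a solution $\eps_{(k)}$ exists; here the inequality $\sum_m \deg\ell_m<0$ of Lemma \ref{totaldegreebelowzero} together with the explicit description of the $\Hom$-spaces between the arcs of a face provide the needed vanishing and confine the possible windings.

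I expect the main obstacle to be twofold. First, the bookkeeping at complete faces: for a polygon sequence that wraps all the way around $F$, products $\rmu_\psi$ built from the already-chosen lower $\eps_{(j)}$ can re-enter through a decomposable front or back, and excluding or matching these requires a case analysis of which orbigons can occur in the double products, entirely parallel to the one carried out in the proof of Proposition \ref{satisfies-curved-axioms}. Second, the base case: unlike the $\alpha\beta\ne 0$ pieces of $\nu^2$ eliminated in Lemma \ref{th:hochschild-odd-nu2}, the pieces $\nu^2(\alpha_2,\alpha_1)$ with $\alpha_2\alpha_1=0$ cannot in general be moved by a unary coboundary, so one has to show -- by feeding suitable sequences into the cocycle identity and exploiting the vanishing of the relevant $\Hom$-spaces (again using Lemma \ref{totaldegreebelowzero} to bound windings) -- that they are already zero, which is where [NMD] (via Lemma \ref{cycleconditions}) is genuinely used. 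Granting these two points, the induction proceeds as sketched and produces the required gauge.
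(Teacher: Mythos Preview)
Your inductive scaffolding --- an $\eps_{(k)}$ in Hochschild component $k-1$, supported on length-$(k-1)$ polygon sequences, solving the two-term equation $\pm\alpha_k\cdot\eps(\dots)\pm\eps(\dots)\cdot\alpha_1=\nu^k$ --- is exactly the skeleton of the paper's proof. A minor slip: $\mu$ does \emph{not} annihilate every sub-block of a polygon sequence (a full-face sub-block of length $N$ returns a vertex idempotent); your conclusion $(\eps\circ\mu)(p)=0$ survives only because $\eps$ is normalized. The real gaps are in where you locate the difficulty. Your base case is wrong: $\nu^2(\alpha_2,\alpha_1)$ with $\alpha_2\alpha_1=0$ is in general \emph{not} already zero, and nothing in Lemma~\ref{totaldegreebelowzero} (a global identity $\sum_m\deg\ell_m<0$) forces any local Hom space to vanish. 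The paper gauges $k=2$ like every other $k$ via a unary $\eps$, accepts that this may disturb (B), and simply re-invokes Lemma~\ref{th:hochschild-odd-nu2} to restore it without touching polygon values. Your obstacle (1) is likewise a non-issue: since $\eps_{(k)}$ sits in component $k-1$, arity alone forces the outer product in $(d\eps_{(k)})(p)$ with $|p|=k$ to be $\mu^2$, so no orbigon case analysis ever enters.

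What is actually missing is the mechanism that makes your bar equation solvable, and it is not Lemma~\ref{totaldegreebelowzero}. The point is a parity/orientation analysis carried out separately for $\nu$ odd and $\nu$ even: calling the clockwise and counterclockwise ends of each boundary arc its ``head'' and ``tail'', the shifted degree of an angle path records whether it switches ends, and this pins down on which side of the target arc the output of $\nu$ can land. From this the paper shows that $\nu(\alpha_{i+k},\dots,\alpha_{i+1})$ always lies in $\alpha_{i+k}\cdot\Gtl_{\cA}+\Gtl_{\cA}\cdot\alpha_{i+1}$, \emph{except} for a residual piece that can occur only when $k\equiv 0\pmod N$ (odd case: a multiple of $\id$) or $k\equiv 1\pmod N$ with $k>1$ (even case: a scalar multiple of $\alpha_{i+1}$). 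Those residual scalars are then shown, via $(d\nu)=0$ on a sequence one step longer, to satisfy a single linear relation around the polygon and are killed by one further, globally chosen $\eps$. None of this uses $\chi(S,M)<0$; it is pure combinatorics of arc orientations inside a single face.
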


\begin{proof}
We start off with a little remark that is important to follow the argument.
Note that when going around a polygon $(\alpha_N,\dots,\alpha_1)$ the boundary arcs can be oriented clockwise or anticlockwise. If two consecutive arcs $a_i=t(\alpha_i)$ and $a_{i+1}=h(\alpha_i)$ have the same orientation then $|\alpha_i|=1$ and $‖\alpha_i‖=0$.
\begin{center}
\begin{tikzpicture}
\begin{scope}
  \draw[fill=black] (0,0) circle (.05cm);
\draw[thick,-latex] (225:1)--(0,0);
\draw[thick,-latex] (315:1)--(0,0);
\draw[-latex] (225:.5) arc (225:315:.5);
\draw (0,-1) node{$‖\alpha‖=1$};
\end{scope}

\begin{scope}[xshift=3cm]
  \draw[fill=black] (0,0) circle (.05cm);
\draw[thick,latex-] (225:1)--(0,0);
\draw[thick,latex-] (315:1)--(0,0);
\draw[-latex] (225:.5) arc (225:315:.5);
\draw (0,-1) node{$‖\alpha‖=1$};
\end{scope}

\begin{scope}[xshift=6cm]
  \draw[fill=black] (0,0) circle (.05cm);
\draw[thick,-latex] (225:1)--(0,0);
\draw[thick,latex-] (315:1)--(0,0);
\draw[-latex] (225:.5) arc (225:315:.5);
\draw (0,-1) node{$‖\alpha‖=0$};
\end{scope}

\begin{scope}[xshift=9cm]
  \draw[fill=black] (0,0) circle (.05cm);
\draw[thick,latex-] (225:1)--(0,0);
\draw[thick,-latex] (315:1)--(0,0);
\draw[-latex] (225:.5) arc (225:315:.5);
\draw (0,-1) node{$‖\alpha‖=0$};
\end{scope}
\end{tikzpicture}
\end{center}
Therefore if we have a polygonal sequence $(α_{i+k}, …, α_{i+1})$ then
the orientation of $h(α_{i+k})$ and $t(α_{i+1})$ will be the same (different) if the total shifted degree ${‖α_{i+k}‖ + … + ‖α_{i+1}‖}$ is zero (one). 

Working with the shifted degree is useful because the shifted degree of
$‖ν(α_{i+k}, …, α_{i+1})‖$ is the total shifted degree ${‖α_{i+k}‖ + … + ‖α_{i+1}‖}$ plus the parity of $\nu$. With this in mind we will
distinguish two cases depending on the parity of $\nu$.
\begin{itemize}
\item The parity of $\nu$ is odd.
We proceed by induction. Regard an elementary polygon $(\alpha_N,\dots,\alpha_1)$ of length $N$ and assume $ ν $ already vanishes on its polygon sequences of length $ ≤ k - 1 $. We will simultaneously gauge away $ ν $ on the polygon sequences of length $ k $ of this polygon. 




First note that an arc can occur at most twice on the boundary of a face and if it does it must be oriented once clockwise and once anticlockwise around the face because the surface is orientable. This implies that if $ ν(α_{i+k}, …, α_{i+1}) $ contains an identity, then $ k $ must be a multiple of $ N $. Indeed, if $k<N$ and there is an identity $\id_a$ in the result then  $t(\alpha_{i+1})=h(\alpha_{i+k})=a$. Because the two orientations of $a$ around the face are different the shifted degree of $(α_{i+k}, …, α_{i+1})$ is odd. As $\nu$ is odd, $ν(α_{i+k}, …, α_{i+1})$ will be even and hence it cannot contain $\id_a$ (whose shifted degree is odd).

Let us now assume $ k $ is not a multiple of $ N $. Then we can write $ ν(α_{i+k}, …, α_{i+1}) = α_{i+k} δ^{(i+k)} + γ^{(i+1)} α_{i+1} $. Note that
\begin{align*}
0 =& (dν) (α_{i+k+1}, …, α_{i+1}) \\
=&(-1)^{‖α_{i+k}‖ + … + ‖α_{i+1}‖} α_{i+k+1} (α_{i+k} δ^{(i+k)} + γ^{(i+1)} α_{i+1}) \\&+ (-1)^{‖α_{i+1}‖ + |α_{i+1}|} (α_{i+k+1} δ^{(i+k+1)} - γ^{(i+2)} α_{i+2}) α_{i+1} \\
=& α_{i+k+1} ((-1)^{‖α_{i+k}‖ + … + ‖α_{i+1}‖} γ^{(i+1)} - δ^{(i+k+1)}) α_{i+1}.
\end{align*}
In evaluating the first row we have used the induction hypothesis. Independent of arc directions, the angles $ α_{i+k+1} $ and $ γ^{(i+1)} $ are composable and $ δ^{(i+k+1)} $ and $ α_{i+1} $ are composable. We deduce $ δ^{(i+k+1)} = (-1)^{‖α_{i+k}‖ + … + ‖α_{i+1}‖} γ^{(i+1)} $ for all $ i $. Set $ ε (α_{i+k-1}, …, α_{i+1}) = (-1)^{‖α_{i+k-1}‖ + … + ‖α_{i+1}‖ + 1} δ^{(i+k)} $ for all $ i $. Then
\begin{align*}
(dε) (α_{i+k}, …, α_{i+1}) &= α_{i+k} δ^{(i+k)} + (-1)^{‖α_{i+k}‖ + … + ‖α_{i+2}‖ + 1 + |α_{i+1}|} δ^{(i+k+1)} α_{i+1} \\
&= α_{i+k} δ^{(i+k)} + γ^{(i+1)} α_{i+1} = ν(α_{i+k}, …, α_{i+1}).
\end{align*}
We have $ (dε)^0 = (dε)^1 = 0 $. Let us now check that $ dε $ does not affect any other elementary polygon sequences or the sequences in the same polygon with length $ l ≤ k-1 $. Regarding shorter sequences in the same polygon, we have
\begin{equation*}
(dε) (α_{i+l}, …, α_{i+1}) = μ(…, ε, …) + ε (…, μ(α_{i+s+tN}, …, α_{i+s+1}), …)
\end{equation*}
where $ N $ is the length of the polygon. The first summand vanishes, since $ l ≤ k - 1 $. In the second summand, the inner $ μ $ may only yield an identity and $ ε $ vanishes. For sequences in other polygons, similar arguments apply. We have safeguarded that $\nu^{0,1}=0$ is preserved when gauging by $ ε $. For $ k > 2 $, also $ (dε)^2 $ vanishes. In case $ k = 2 $ the $(d\eps)^2(\alpha,\beta)$ may be nonzero in rare cases, but we fix this by applying \autoref{th:hochschild-odd-nu2}, without changing $ ν $ on polygon sequences.


Finally, let us treat the case where $ k $ is a multiple of the length $ N $ of the polygon. Apart from the identities, we can gauge everything in $ ν(α_{i+k}, …, α_{i+1}) $ away as above. It remains to check out the identities. Write $ ν(α_{i+k}, …, α_{i+1}) = c_i \id_{a_i} $. Note we have
\begin{align*}
0 &= (dν) (α_{i+k+1}, …, α_{i+1}) \\
&= α_{i+k+1} ν(α_{i+k}, …, α_{i+1}) - ν(α_{i+k+1}, …, α_{i+2}) α_{i+1} \\
&= c_i α_{i+k+1} - c_{i+1} α_{i+1} = (c_i - c_{i+1}) α_{i+1}.
\end{align*}
Here we have used that $ ν $ already vanishes on sequences of the same polygon of length $ ≤ k-1 $. We obtain that all $ c_i $ around the polygon are equal. Denote this value by $ c $. Choose an angle $ α_1 $ in the polygon and define $ ε(α_{k-N+1}, …, α_1) ≔ c α_1 $. As desired,
\begin{align*}
(dε) (α_{i+k}, …, α_{i+1}) &= μ(…, ε(α_{k+1}, …, α_1), …) \\
&= c μ(α_{i+N}, …, α_{i+1}) = c \id_{a_i}.
\end{align*}
In the first row we have used that the sequence $ α_1, …, α_{k-N+1} $ of length $ k-N+1 $ appears precisely once in the sequence $ α_{i+k}, …, α_{i+1} $ and hence an inner $ ε $ can be applied precisely once. We see that $ (dε)^0 = (dε)^1 = 0 $. Let us check that $ dε $ vanishes on any polygon sequence $ β_1, …, β_l $ of length $ l ≤ k-1 $ in the same polygon. Indeed,
\begin{equation*}
(dε) (β_l, …, β_1) = ε(…, μ(…), …) + μ^2 (…, ε(…), …) + μ^{≥3} (…, ε(…), …).
\end{equation*}
The first summand vanishes, because the inner $ μ $ only gives identities. The second summand vanishes, because the result of the inner $ ε $ is a multiple of the first, equivalently last input of its input sequence, hence not composable with the input angle of the outer $ μ^2 $. The third summand vanishes, because $ ε $ consumes $ k-N+1 $ inputs and the outer $ μ^{≥3} $ needs $ N-1 $ more inputs, while the sequence $ β_1, …, β_l $ has length only $ l ≤ k-1 $. Similarly, one checks that $ dε $ vanishes entirely on polygon sequences in other polygons. For $ k > N $, we have $ (dε)^2 = 0 $. For $ k = N $, it may happen that $ (dε)^2 ≠ 0 $, but we fix this by applying \autoref{th:hochschild-odd-nu2}.

In total, we have gauged $ ν $ infinitely many times during this proof. However, the $ ε $ gauges have higher and higher input length. Moreover we only invoke \autoref{th:hochschild-odd-nu2} finitely many times, this means that the total sum of the gauges is defined in $\Hom_\k(\bigoplus_{i\ge 0} A[1]^{ \otimes_\k i},A[1])$ and we conclude it is a Hochschild cochain.

\item The parity of $\nu$ is even.
As in the odd case, we proceed again by induction over the length $ k $ of the polygon sequence. 

Let us first check for possible identities in $ ν(α_{i+k}, …, α_{i+1}) $. In case $ k $ is a multiple of $ N $, the source arc of $ α_{i+1} $ is equal to the target arc of $ α_{i+k} $, in particular $||ν(α_{i+k}, …, α_{i+1})||$ is even and does not contain identities. In case $ k $ is not a multiple of $ N $, we observe
\begin{equation*}
(dν) (α_{i+k+1}, α_{i+k}, …, α_{i+1}) = α_{i+k+1} ν(α_{i+k}, …, α_{i+1}) + ν(α_{i+k+1}, …, α_{i+2}) α_{i+1}.
\end{equation*}
Since $ k $ is not a multiple of $ N $, we have $ α_{i+k+1} ≠ α_{i+1} $ and conclude that $ ν(α_{i+k}, …, α_{i+1}) $ contains no identities.

We now proceed with gauging $ ν(α_{i+k}, …, α_{i+1}) $ to zero. By abuse of wording, let us say “head” and “tail” of a polygon's arc $ a $ to mean the vertex at the clockwise end and at the counterclockwise end of $ a $. For every $ i $ we write
\begin{equation*}
ν(α_{i+k}, …, α_{i+1}) = κ_i + λ_i,
\end{equation*}
where $ κ_i $ contains the angle paths that starts at the “head” of $ a_{i+1} $ and $ λ_i $ starts at the “tail” of $ a_{i+1}=t(α_{i+1})$. 
\begin{center}
\begin{tikzpicture}
\draw[thick,latex-] (0,0) node{$\bullet$}--(2,0) node{$\bullet$};
\draw (1,.3) node{$a_{i+1}$};
\draw[-latex] (0:.3) arc (0:120:.3);
\draw[-latex,xshift=2cm] (180:.3) arc (180:300:.3);
\draw (-.5,0) node{$\kappa_i$};
\draw (2.5,0) node{$\lambda_i$};
\end{tikzpicture}
\end{center}
We show that $ λ_i $ necessarily vanishes. Regard the source arc $ a_{i+1} $ of $ α_{i+1} $ and target arc $ a_{i+k+1} $ of $ α_{i+k} $. We have
\begin{equation*}
0 = (dν) (α_{i+k+1}, …, α_{i+1}) = α_{i+k+1} (κ_i + λ_i) + (κ_{i+1} + λ_{i+1}) α_{i+1}.
\end{equation*}
We know $ κ_i $ is odd or even, depending on whether $ a_{i+1} $ and $ a_{i+k+1} $ are equally oriented with respect to the polygon or not. Since $ κ_i $ always starts at the “head” of $ a_{i+1} $, we deduce from its degree that it always ends at the “tail” of $ a_{i+k+1} $. In particular $ α_{i+k+1} κ_i = 0 $. Similarly, $ λ_i $ starts at the “tail” of $ a_{i+1} $, hence ends at the “head” of $ a_{i+k+1} $, whether $ a_{i+1} $ and $ a_{i+k+1} $ are oriented equally or not. In particular $ α_{i+k+1} $ and $ λ_i $ are composable. But $ α_{i+1} $ starts at the “head” of $ a_{i+1} $, while $ λ_i $ starts at the “tail” of $ a_{i+1} $, hence the summands $ α_{i+k+1} λ_i $ and $ (κ_{i+1} + λ_{i+1}) α_{i+1} $ consist of disjoint sets of angles. Since the whole sum is supposed to vanish, we conclude $ λ_i = 0 $.

Since $ κ_i $ starts at the side of $ α_{i+1} $ and $ ν(α_{i+k}, …, α_{i+1})$ contains no identities, we can write $ κ_i = γ_i α_{i+1} $. We aim at gauging $ ν(α_{i+k}, …, α_{i+1}) $ to zero. Let us first gauge away all terms except the possible scalar multiple of $ α_{i+1} $, which comes from a possible identity in $ γ_i $. Put $ ε(α_{i+k}, …, α_{i+2}) ≔ γ_i $. Then
\begin{equation*}
(dε) (α_{i+k}, …, α_{i+1}) = γ_i α_{i+1} + α_{i+k} γ_{i-1}.
\end{equation*}
Apart from identities in $ γ_{i-1} $, the angles $ α_{i+k} $ and $ γ_{i-1} $ are not composable. To see this, consider two cases. If $ a_i $ and $ a_{i+k} $ are oriented opposite, then $ ν(α_{i+k-1}, …, α_i) = κ_{i-1} $ is even. Hence $ γ_{i-1} $ enters $ a_{i+k} $ at the opposite side of where $ α_{i+k} $ leaves. If $ a_i $ and $ a_{i+k} $ are oriented equally, then $ κ_{i-1} $ is odd and $ γ_{i-1} $ enters $ a_{i+k} $ still at the opposite side of where $ α_{i+k} $ leaves. Either way, we have $ α_{i+k} γ_{i-1} = 0 $ and $ γ_i α_{i+1} = κ_i $ remains as desired.

Let us check that $ dε $ vanishes on polygon sequences in other polygons and on polygon sequences shorter than $ k $ in the same polygon. Indeed,
\begin{equation*}
(dε) (β_l, …, β_1) = ε(…, μ(…), …) + μ(…, ε(…), …).
\end{equation*}
In the first summand, the inner $ μ $ can only give identities, on which $ ε $ vanishes. In the second summand, the input sequence of the inner $ ε $ must be from the same polygon as $ α_1, …, α_N $ and of length precisely $ k-1 $. Since the outer $ μ $ is $ μ^{≥2} $, this means that $ β_1, …, β_l $ has length at least $ k $, which was not to be assumed.

Finally, let us gauge away the remaining scalar multiples of $ α_{i+1} $ in $ ν(α_{i+k}, …, α_{i+1}) $. For $ i ∈ ℤ/Nℤ $, write $ ν(α_{i+k}, …, α_{i+1}) = c_i α_{i+1} $. If $ c_i ≠ 0 $, we deduce that the target arc of $ α_{i+k} $ is equal to the target arc of $ α_{i+1} $, that is, $ a_{i+k+1} = a_{i+2} $. Moreover, $ ν $ is even and hence $ a_{i+k+1} $ and $ a_{i+2} $ are oriented equally. In other words, $ a_{i+k+1} $ and $ a_{i+2} $ are equal as arcs and are oriented equally in the polygon. As we remarked in the odd case, this is only possible if $ i+k+1 = i+2 $ in $ ℤ/Nℤ $. We conclude that if some $ c_i $ does not vanish, then $ k = lN + 1 $ for some $ l ≥ 1 $.

Assuming $ k = lN + 1 $, let us show that the different $ c_i $ along the polygon are related. Indeed,
\begin{equation*}
0 = (dν) (α_{(l+1)N}, …, α_1) = \sum_{i = 1}^N c_i \id_{a_1} + μ^2 (α_{(l+1)N}, ν(…)) + μ^2 (ν(…), α_1).
\end{equation*}
We have used that an inner $ ν $ can be placed in precisely $ N $ ways, replicating the corresponding angle $ α_i $ and forming a disk together with the remaining angles. Moreover, an inner $ μ $ is impossible, since it only yields identities. An outer $ μ^2 $ is possible, but gives non-empty angles only which can be distinguished from the identities. We deduce that the sum of all $ c_i $ vanishes. Put 
\begin{equation*}
ε(α_{i+lN}, …, α_{i+1}) ≔ - \left(\sum_{j = 1}^{i-1} c_j\right) \id_{a_{i+1}}.
\end{equation*}
Then
\begin{equation*}
(dε) (α_{i+lN+1}, …, α_{i+1}) = - \sum_{j = 1}^{i-1} c_j α_{i+lN+1} + \sum_{j = 1}^i c_j α_{i+1}.
\end{equation*}
This definition makes sense for $ i ∈ ℤ/Nℤ $, since the sum over $ c_i $ vanishes. The sums cancel each other because $ α_{i+lN+1} = α_{i+1} $, and $ c_i α_{i+1} $ remains as desired. It is standard to check that $ dε $ does not have values on shorter polygon sequences or other polygons.
\end{itemize}
\end{proof}

\begin{lemma}
\label{th:hochschild-odd-compositions}
Let $ ν ∈ \Ker(d) \subset \underline{\HC}^\bullet(\Gtl_{
\cA}$ with (A) $ν^0 = ν^1 = 0$, (B) $\nu(\alpha,\beta)=0$ if $\alpha\beta\ne 0$ and suppose (C) $\nu$ vanishes on polygonal sequences. Then $ ν $ can be gauged to zero on all angle sequences.
\end{lemma}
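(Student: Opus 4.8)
The plan is to induct on the total length $N$ of angle sequences, building at each stage a gauge that is supported on sequences of total length exactly $N$. For $N\le2$ nothing needs to be done: (A) kills $\nu^1$; a length-two elementary sequence $(\alpha,\beta)$ is a polygon sequence when $\alpha\beta=0$, hence killed by (C), and is killed by (B) when $\alpha\beta\ne0$. So fix $N\ge3$ and assume inductively that $\nu$ has already been gauged so that it vanishes on every angle sequence of total length $<N$, still lies in $\Ker(d)\cap\underline{\HC}^\bullet(\Gtl_\cA)$, and still vanishes on all polygon sequences.

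The first step is to see that on any sequence $(x_r,\dots,x_1)$ of at least two nontrivial angle paths of total length exactly $N$, the cocycle identity collapses to the \emph{merging identity}
\[
0=\sum_{j\,:\,x_{j+1}x_j\ne0}\pm\,\nu(x_r,\dots,x_{j+2},\,x_{j+1}x_j,\,x_{j-1},\dots,x_1).
\]
Indeed, in $d\nu=[\mu,\nu]$ one has $\mu^0=\mu^1=0$ and $\mu^2$ equal to the ordinary product on $\Gtl_\cA$, while every higher (tree-gon) product $\mu_\psi$ has arity $\ge3$; this is [NL2] together with [NMD] and \autoref{cycleconditions}. Consequently every term with $\mu$ outermost has its inner $\nu$ applied to a proper subsequence, of total length $\le N-1$, which vanishes by the inductive hypothesis; and in each term with $\nu$ outermost the inner product is either $\mu^2$, contributing the merging summands above when the product is nonzero, or some $\mu_\psi$, which lowers total length by $\ge3$ so that the outer $\nu$ lands on total length $\le N-3<N$ and again vanishes. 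Only the merging terms survive.

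Now fix an elementary sequence $\gamma=(\gamma_N,\dots,\gamma_1)$ and consider all its contractions $\gamma^S$, $S\subseteq\CI(\gamma)$. If $\CI(\gamma)=\emptyset$, then $\gamma$ is a polygon sequence and $\nu(\gamma)=0$ by (C). Otherwise, the merging identities for the $\gamma^S$ say exactly that $\phi_\gamma:=\bigl(\nu(\gamma^S)\bigr)_{S\subseteq\CI(\gamma)}$ is a cycle for the differential $\partial[T]=\sum_{j\in T}\pm[T\setminus j]$ on $\bigoplus_{S\subseteq\CI(\gamma)}\Gtl_\cA\,[S]$: merging $\gamma^S$ at a contractible index corresponds to adjoining that index to $S$, and $\CI(\gamma)\ne\emptyset$ realizes this as the augmented simplicial chain complex (over $\Gtl_\cA$) of the simplex on the vertex set $\CI(\gamma)$, which is acyclic since that simplex is nonempty. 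Hence $\phi_\gamma=\partial\psi_\gamma$ for some $\psi_\gamma=\bigl(\psi_\gamma(S)\bigr)_S$. Define $\varepsilon(\gamma^S):=\psi_\gamma(S)$ for every elementary $\gamma$ of total length $N$ with $\CI(\gamma)\ne\emptyset$ (put $\psi_\gamma:=0$, so $\varepsilon:=0$, on contractions of polygon-type $\gamma$), and $\varepsilon:=0$ on every angle sequence of total length $\ne N$; this is a well-defined element of $\underline{\HC}^\bullet(\Gtl_\cA)$ since the supports are disjoint and at most one summand is nonzero on a given input. Running the vanishing analysis of the previous paragraph for $d\varepsilon$ instead of $d\nu$, we get that $d\varepsilon$ vanishes on every angle sequence of total length $<N$ and on every polygon sequence, and that on a sequence of total length $N$ only the merging terms of $d\varepsilon$ survive; these assemble to $\partial\psi_\gamma=\phi_\gamma$, so $(d\varepsilon)(\gamma^S)=\nu(\gamma^S)$ for every contraction. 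Replacing $\nu$ by $\nu-d\varepsilon$ completes the inductive step; the composite of these gauges over $N=3,4,\dots$ is a well-defined cochain because a gauge introduced at stage $N'$ is supported on total length $N'$ and so contributes nothing to any input of total length $<N'$, and the resulting $\nu$ vanishes on all angle sequences.

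The hard part is the bookkeeping in the collapse step: one must run carefully through every term of $[\mu,\nu]$ and of $[\mu,\varepsilon]$, \emph{including} the higher tree-gon products $\mu_\psi$ which did not appear in the polygon-sequence lemma, and confirm that each is either a merging term or has some $\nu$ (respectively $\varepsilon$) forced onto a strictly smaller total length — the latter resting on the arity bound $\ge3$ for tree-gons under [NMD]. One must also match the signs between the merging operator inside $d$ and the simplicial boundary $\partial$ so that $\partial\psi_\gamma=\phi_\gamma$ really does reproduce $\nu$. Granting this, the remaining ingredients — acyclicity of the simplicial complex, well-definedness and convergence of the total gauge, and preservation of (C) along the induction — are routine.
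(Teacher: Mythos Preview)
Your proof is correct and follows essentially the same strategy as the paper: induct on total length, collapse $d\nu$ to a ``merging identity'' using the inductive hypothesis and the arity bound on tree-gons, and then solve the resulting linear system over the contractions $\gamma^S$ of each elementary $\gamma$. The only substantive difference is in how the linear system is solved. You observe that the merging relations are precisely the cycle condition for the (augmented) simplicial chain complex on the vertex set $\CI(\gamma)$ and invoke its acyclicity abstractly to produce $\psi_\gamma$; the paper instead writes down an explicit contracting homotopy, namely the cone over the vertex $s=\max\CI(\gamma)$, via the formula $\varepsilon(\gamma^{\{s\}\cup T})=(-1)^{|T|}\nu(\gamma^T)$. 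Your formulation is cleaner conceptually and makes transparent why a bounding $\psi_\gamma$ must exist; the paper's formulation has the advantage of making the sign verification concrete (at least under the simplifying assumption that all angles are odd), which is exactly the point you flag as ``the hard part'' and leave to a routine check.
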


\begin{proof}
We prove the statement by gauging $ ν $ to zero inductively on the total length of the angle sequences. After each step of gauging, we prove that all additional assumptions (A-C) still hold. 

We already know $ ν^{1} = 0$ and if $(\alpha,\beta)$ is a sequence of length two then either $\alpha\beta \ne 0$  of $(\alpha,\beta)$ is a polygon sequence, so by (B) and (C) we are done for total length at most $2$.

Now let $(α_k, …, α_1) $ be any sequence of $ k ≥ 3 $ angles $ α_i: a_i → a_{i+1} $. By induction, we assume that $ ν $ already vanishes on all shorter sequences. Since $ν$ already vanishes on polygon sequences, we can assume $(α_k, …, α_1)$ is not a polygon sequence.

We will gauge $ ν $ on $ (α_k, …, α_1)$ simultaneously with all other sequences of the same type and therefore we will assume $(α_k, …, α_1)$ is elementary. Now $ (α_k, …, α_1)$ consists of indecomposable angles of which at least one pair is composable (otherwise it would be a polygon sequence). Orientations of arcs will play no role in this proof, apart from determining the degrees of the angles. For simplicity, let us assume all $α_i$ have odd degree. (This happens in case of a dimer.) Similarly, $ ν $ is either of odd or even parity, and the signs we write are for the odd case.

Let $\CI(\beta)$ be the set of all contractible indices and $s = \max(\CI(\beta))$. We will now gauge all $ β^S $ for $ S ⊂ \CI(\beta)$ simultaneously. Put
\begin{equation*}
ε(β^{\{s\} \cup T}) = (-1)^{|T|} ν(β^T), \quad T ⊂ \CI(\beta)\setminus\{s\}.
\end{equation*}
We show that $ (dε)(β^S) = ν(β^S) $ for all $ S ⊂ \CI(\beta)$. 
\begin{itemize}
\item
In case $ s \notin S $, we have
\begin{equation*}
(dε) (β^S) = ε(β^{\{s\} \cup S}) = (-1)^{|S|} ν(β^S).
\end{equation*}
by definition. Note that any $ ε(…, μ^{≥3}, …) $ and $ μ(…, ε, …) $ vanish since their input sequence of $ ε $ is shorter than $(α_k, …, α_1)$. 
\item
In case $ s ∈ S $, first observe that for $ K ⊂ \CI(\beta) $ we have
\begin{equation*}
\sum_{t \notin K} (-1)^{1 + |K_{<t}|} ν(β^{K \cup \{t\}}) = (dν) (β^K) ∓ ν(…, μ^{≥3}, …) ∓ μ(…, ν, …) = 0.
\end{equation*}
Here $ |K_{<t}| $ denotes the number of indices in $ K $ smaller than $ t $. Indeed, the second and third summands vanish because the input sequence of $ ν $ is shorter than $ β $. Now using this observation for $ K = S \setminus \{s\} $ we get
\begin{align*}
(dε) (β^S) &= - \sum_{t \notin S} (-1)^{1 + |S_{<t}|} ε(β^{S \cup \{t\}}) \\
&= \sum_{t \notin S} (-1)^{|S \setminus \{s\} \cup \{t\}| + |S_{<t}|} ν(β^{S \setminus \{s\} \cup \{t\}}) \\
&= \sum_{t \notin S \setminus \{s\}} (-1)^{|S| + 1} (-1)^{|(S \setminus \{s\})_{<t}| + 1} ν(β^{S \setminus \{s\} \cup \{t\}}) - (-1)^{|S| + 1} (-1)^{|S_{<s}| + 1} ν(β^{S \setminus \{s\} \cup \{s\}}) \\
&= 0 + ν(β^S)
\end{align*}
as desired. In the first row, we have used that all $ ε(…, μ^{≥3}, …) $ and $ μ(…, ε, …) $ vanish since their input sequence of $ ε $ is shorter than $ (α_k, …, α_1) $. 
\end{itemize}
Finally, $ (dε)^0 = (dε)^1 = 0 $ and $ dε $ vanishes on polygon sequences $ γ_l, …, γ_1 $. Indeed,
\begin{equation}
\label{eq:hochschild-odd-distribution-deps-vanishing}
(dε) (γ_l, …, γ_1) = ε(…, μ, …) + μ(…, ε, …).
\end{equation}
The first summand vanishes because the inner $ μ $ yields only vertex idempotents. The second summand vanishes, since the input sequence of $ ε $ consists only of indecomposable angles, while a sequence $ β^{\{s\} ∪ T} $ on which $ ε $ is defined has at least one decomposable angle.

Next, note that $ dε $ vanishes if an input is a vertex idempotent. Now assume $ γ_l, …, γ_1 $ is any sequence of non-empty angles of total length less than or equal to that of $ α_k, …, α_1 $. Consider the sum \eqref{eq:hochschild-odd-distribution-deps-vanishing} again. The first summand vanishes in case of $ μ^{≥3} $ because then the input sequence of $ ε $ has less total length than $ α_1, …, α_k $. If the first summand does not vanish for some $ μ^2 $, we conclude that the indecomposable constituents of the sequence are equal to $ α_1, …, α_k $. In other words, it is one of those sequences we have just gauged correctly already. The second summand vanishes, since the input sequence of $ ε $ is shorter than $ α_k, …, α_1 $.
\end{proof}

\begin{theorem}\label{lowdegzeroiszero}
Let $ ν ∈ \Ker(d)$ with $\nu^{0,1}=0$ then $\nu$ is zero in $\HH^\bullet(\Gtl_\cA)$.
\end{theorem}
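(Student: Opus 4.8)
The theorem will follow by chaining together the three preceding lemmas, so the plan is essentially bookkeeping rather than new content. By Remark~\ref{strictidempotents} the normalized complex $\underline{\HC}^\bullet(\Gtl_\cA)$ computes $\HH^\bullet(\Gtl_\cA)$, so we may take $\nu$ to be a normalized cocycle; since $\mu^0 = \mu^1 = 0$, the hypothesis $\nu^0 = \nu^1 = 0$ persists, i.e. $\nu$ satisfies condition~(A).

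First I would apply \autoref{th:hochschild-odd-nu2} to gauge $\nu$ so that, in addition to (A), it satisfies (B): $\nu^2(\alpha,\beta) = 0$ whenever $\alpha\beta \ne 0$, without altering $\nu$ on polygon sequences. Then the following lemma gauges $\nu$ further so that it also satisfies (C): $\nu$ vanishes on all polygonal sequences, while (A) and (B) are preserved. Finally \autoref{th:hochschild-odd-compositions} gauges $\nu$ to zero on every angle sequence. At that point $\nu$ is a normalized cochain vanishing on every tuple of composable nonzero angle paths, and vanishing by normalization on every tuple one of whose entries is a vertex idempotent; since $\Gtl_\cA$ is spanned over $\C$ by the vertex idempotents together with the nonzero angle paths, and the tensor product over $\k = \C^\cA$ forces consecutive entries to be composable, such tuples span $\bigoplus_{i \ge 0} A[1]^{\otimes_\k i}$. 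Hence $\nu = 0$, and the original class vanishes in $\HH^\bullet(\Gtl_\cA)$.

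The only delicate point --- and the main, if minor, obstacle --- is that the composite of all these gauges genuinely defines an element of $\Hom_\k(\bigoplus_i A[1]^{\otimes_\k i}, A[1])$. The second and third lemmas each perform infinitely many successive gauges, but the $\eps$'s involved have strictly increasing input length, and \autoref{th:hochschild-odd-nu2} is invoked only finitely often, so the total gauge converges componentwise. This is precisely the convergence bookkeeping already carried out inside the proofs of those lemmas, so no additional work is needed here.
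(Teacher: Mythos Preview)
Your proposal is correct and takes essentially the same approach as the paper, which simply records the theorem as ``an immediate consequence of the previous lemmas''; you have merely spelled out the chaining explicitly. The one mildly hand-wavy step is the justification that passing to a normalized representative preserves $\nu^{0,1}=0$: what $\mu^0=\mu^1=0$ gives directly is $(d\kappa)^0=0$, while $(d\kappa)^1$ is an inner derivation, so one should note that a further gauge by $\lambda=\lambda^0=\kappa^0$ inside $\underline{\HC}$ kills $\nu^1$ as well --- but this is a cosmetic point and the paper treats the reduction to normalized cochains just as informally in the section preamble.
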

\begin{proof}
This is an immediate consequence of the previous lemmas.
\end{proof}

This theorem implies that the cohomology class of the cocycles can be read off from its zeroth and first components.  
\begin{lemma}\label{central-and-derivation}
If $\nu$ is a cocycle then 
\begin{itemize}
    \item $\nu^0$ is a central element,
    \item if $\nu^0=0$ then $\nu^1$ is a derivation.
\end{itemize}
\end{lemma}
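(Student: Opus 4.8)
The plan is to derive both statements from the single cocycle identity $[\mu,\nu]=0$ by evaluating it on paths of length $0$, $1$ and $2$. Here $\mu$ is the gentle $A_\infty$-structure with $r=0$, so $\mu^0=0$, and by [NMD] moreover $\mu^1=0$ and $\mu^2$ is the ordinary product of $\Gtl_\cA$, while the surviving higher products $\mu^{\ge 3}$ all come from tree-gons. The bookkeeping observation that makes the low-arity analysis short is this: when $[\mu,\nu]$ is evaluated on at most two external inputs, no summand of the form $\mu^{\ge 3}(\dots,\nu(\dots),\dots)$ can occur, since the single $\nu$-slot together with the at most two available inputs never fills three or more arguments of an outer $\mu$ --- the only loophole would be to use the constant $\nu^0$ as a separate argument, which is exactly what the hypothesis of the second bullet forbids. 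The length-$0$ component of the identity is automatically satisfied and says nothing.

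For the first bullet, evaluate $[\mu,\nu]=0$ on a single path $a$. Every summand in which $\mu$ or $\nu$ swallows the argument factors through $\mu^1$ or $\mu^0$ and therefore vanishes, so the identity collapses to $\mu^2(a,\nu^0)\pm\mu^2(\nu^0,a)=0$. Since $\mu^2$ extends the associative product, this is precisely the graded commutation relation $a\,\nu^0=(-1)^{|a|\,|\nu^0|}\,\nu^0\,a$ for all homogeneous $a$, i.e. $\nu^0$ lies in the graded centre of $\Gtl_\cA$. To upgrade this to $\nu^0\in Z(\Gtl_\cA)$, split $\nu^0$ into its even and odd parts: the even part is central, and for the odd part one re-runs the argument in the proof of \autoref{center} --- which shows that the positive-length part of any (graded-)central element is a polynomial in the loops $\ell_m^r$ --- and then observes that an odd $\ell_m^r$ commutes (rather than anticommutes) with the odd angle arrows turning around $m$, of which there is at least one. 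Hence the odd part must vanish and $\nu^0\in Z(\Gtl_\cA)$.

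For the second bullet, assume $\nu^0=0$ and evaluate $[\mu,\nu]=0$ on a composable pair $(a,b)$. Running through the index pairs in the bracket: every summand factoring through $\mu^0$ or $\mu^1$ vanishes, the three summands $\mu^3(\dots,\nu^0,\dots)$ vanish by hypothesis, and what remains is $\mu^2(\nu^1(a),b)\pm\mu^2(a,\nu^1(b))\pm\nu^1(\mu^2(a,b))=0$. Rewriting $\mu^2$ through the associative product turns this into the graded Leibniz rule $\nu^1(ab)=\nu^1(a)\,b\pm a\,\nu^1(b)$, so $\nu^1$ is a $\Z_2$-graded derivation of $\Gtl_\cA$ (and a $\k$-derivation once one passes, as one may, to the normalized complex, where $\nu^1$ kills the vertex idempotents).

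Both statements thus amount to two elementary expansions of the bracket; the routine-but-delicate points are the sign chase and the verification that $\mu^{\ge 3}$ truly contributes nothing in arities $\le 2$. The one step that is not a pure expansion --- hence the main, if mild, obstacle --- is the passage in the first bullet from the graded centre to the honest centre $Z(\Gtl_\cA)$, which recycles the computation behind \autoref{center}.
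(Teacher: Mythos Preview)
Your argument is the same as the paper's: both simply expand $(d\nu)^1$ and $(d\nu)^2$ and observe that only the $\mu^2$-terms survive, yielding graded centrality of $\nu^0(1)$ and the Leibniz rule for $\nu^1$ respectively. The paper's proof is literally just those two one-line computations.

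The one place you go beyond the paper is the digression upgrading graded centrality to honest centrality in $Z(\Gtl_\cA)$. That step carries a small gap: you claim there is always at least one \emph{odd} indecomposable angle around $m$, but this fails whenever all arcs at $m$ point the same way (all outward or all inward), in which case every angle at $m$ has degree $0$. The cleaner fix is to note that $|\ell_m|$ is automatically even for every $m$ --- traversing the cycle of arcs around $m$, the in/out orientation must flip an even number of times --- so every element of the (graded-)centre is even and the two notions of centre coincide. The paper does not spell this out either, but uses the lemma only for odd $\nu$, where $\nu^0(1)$ is even and the issue does not arise.
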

\begin{proof}
We have that $ν^0(1)$ is a central element because 
\begin{equation*}
0 = (dν)^1 (α) =  μ^2 (α, ν^0(1)) + (-1)^{‖α‖} μ^2 (ν^0(1), α).
\end{equation*}
If $\nu^0=0$ then $\nu^1$ is a derivation because 
\begin{align*}
0 &= (-1)^{|β|} (dν)(α, β) = ν^1 (α) β + α ν^1 (β) - ν(αβ), \\
0 &= ν^1 (\id_a) \id_a + \id_a ν^1 (\id_a) - ν(\id_a).
\end{align*}
\end{proof}

Depending on the parity of the cocycle we can be even more precise: odd cocycles are determined by $\nu^0$, which is a central element, while even cocycles are determined by $\nu^1$, viewed as an outer derivation of $\Gtl_\cA$.  
\begin{theorem}\label{embeddings}
Let $\cA$ be an arc collection.
\begin{enumerate}
    \item The map $$\HH^{\even}(\Gtl_\cA) \to \mathrm{OutDer}(\Gtl_\cA): \nu \mapsto \nu^1$$ is a well-defined embedding.
    \item The map $$\HH^{\odd}(\Gtl_\cA) \to Z(\Gtl_\cA): \nu \mapsto \nu^0(1)$$ is a well-defined  embedding.
\end{enumerate}
\end{theorem}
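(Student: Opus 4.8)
The plan is to reduce to \ref{lowdegzeroiszero}, which already says that a cocycle whose zeroth and first components both vanish is null-homologous; what remains is to show that, according to parity, one of these two components is automatically zero while the other one records the cohomology class. Throughout I would work with normalized cochains, which is legitimate by \ref{strictidempotents}.

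First I would pin down the coboundaries. Since we take $r = 0$ and [NMD] holds, $\mu^0 = 0$ and $\mu^1 = 0$. Expanding $d\epsilon = [\mu, \epsilon]$ on zero and one inputs, every term containing $\mu^0$ or $\mu^1$ drops out, so $(d\epsilon)^0 = 0$ for every cochain $\epsilon$, and $(d\epsilon)^1(\alpha) = \pm \mu^2(\alpha, \epsilon^0(1)) \pm \mu^2(\epsilon^0(1), \alpha)$, which is — up to an overall sign — the inner derivation $[\epsilon^0(1), -]$ of $\Gtl_\cA$. Conversely, a cochain supported only in arity zero realizes any prescribed inner derivation as its degree-one part without touching the (vanishing) degree-zero part. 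Combined with \ref{lowdegzeroiszero}, this gives the criterion: a cocycle $\nu$ is zero in $\HH^\bullet(\Gtl_\cA)$ if and only if $\nu^0 = 0$ and $\nu^1$ is an inner derivation.

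The geometric input I would use is a parity statement about the central loops. Running once around a marked point $m$, the incident arcs (with multiplicity at loops) form a cyclic sequence whose in/out orientations change sign an even number of times, since one returns to the starting arc; an angle has $\Z_2$-degree $1$ precisely at such a sign change, so $|\ell_m|$ is even for every $m$. Hence $Z(\Gtl_\cA) = \C[\ell_m \mid m \in M]/(\ell_i \ell_j \mid i \ne j)$ (\ref{center}) is concentrated in even parity, and therefore so is $\mathrm{OutDer}(\Gtl_\cA)$, which by \ref{derivations} is generated as a $Z(\Gtl_\cA)$-module by the even Euler derivations $E_\alpha = \alpha \partial_\alpha$.

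With these in place both statements follow quickly. If $\nu$ is an even cocycle, then $\nu^0(1)$ is a central element of odd parity, hence $0$; \ref{central-and-derivation} then makes $\nu^1$ an even derivation, the map $\nu \mapsto [\nu^1] \in \mathrm{OutDer}(\Gtl_\cA)$ is well-defined by the coboundary analysis, and $[\nu^1] = 0$ forces $\nu^1$ inner, hence $[\nu] = 0$ by the criterion — this is (1). If $\nu$ is an odd cocycle, then $\nu^0(1)$ is central of even parity, so it lies in $Z(\Gtl_\cA)$ and, since $(d\epsilon)^0 = 0$ always, depends only on the class of $\nu$; if $\nu^0(1) = 0$ then $\nu^0 = 0$, so \ref{central-and-derivation} makes $\nu^1$ an odd derivation, which is forced to be inner because $\mathrm{OutDer}(\Gtl_\cA)$ has no odd part, hence $[\nu] = 0$ — this is (2). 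The step I expect to be most delicate is the sign-bookkeeping in the coboundary analysis: matching $[\mu, \epsilon]^1$ against the sign conventions of $\mu^2$ and of the graded commutator, and verifying that an arity-zero cochain produces the inner derivation with the correct sign (its higher components $(d\epsilon)^{\ge 2}$ being irrelevant). The parity statement and the reduction to \ref{lowdegzeroiszero} are comparatively routine.
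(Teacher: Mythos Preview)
Your proposal is correct. For part (1) it coincides with the paper's argument essentially verbatim: the center is even, so $\nu^0 = 0$ for parity reasons, $\nu^1$ is a derivation by \autoref{central-and-derivation}, coboundaries contribute inner derivations, and injectivity follows from \autoref{lowdegzeroiszero}.

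For part (2) you take a genuinely different route. The paper, assuming $\nu^0 = 0$, builds an explicit even cochain $\epsilon = \epsilon^0$ with $(d\epsilon)^1 = \nu^1$ by hand: it decomposes $\nu^1(\alpha) = \alpha\delta^{(\alpha)} + \gamma^{(\alpha)}\alpha$ for each indecomposable angle, uses the cocycle equation on length-two polygon sequences to derive compatibility relations among the $\gamma$'s and $\delta$'s at each arc, and then assembles $\epsilon^0_a$ from these local pieces (this is the content of \autoref{fig:hochschild-odd-arc-surrounding-angles} and \autoref{fig:hochschild-odd-angle-surrounding-angles}). You instead observe that this entire construction is unnecessary given \autoref{derivations}: since $Z(\Gtl_\cA)$ is even and the Euler derivations $E_\alpha$ are even, every odd $\k$-derivation is inner, so $\nu^1 = [c,-]$ for some odd $c$, and setting $\epsilon^0(1) = c$ does the job. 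Your argument is shorter and makes the logical dependence on \autoref{derivations} explicit; the paper's argument is more self-contained and in effect re-derives the odd case of \autoref{derivations} in situ, which has the side benefit of producing the gauge $\epsilon$ in closed form. Either way the reduction to \autoref{lowdegzeroiszero} is the same.
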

\begin{proof}
For the first statement note that the center only has cycles of even degree, which represent odd maps $A[1]^{\otimes 0}\to A[1]$. Therefore $\nu^0=0$ if the parity of $\nu$ is even and hence by the previous lemma $\nu^1$ is indeed a derivation. The map is also well defined because
\begin{align*}
(d\kappa)^1(\alpha) &= \mu^1(\kappa^1(\alpha))+ \mu^2(\kappa^0,\alpha) - \mu^2(\alpha,\kappa^0)=[\kappa^0,\alpha].
\end{align*}
Furthermore if $\nu^1$ is inner and $\nu^0=0$ then we can find a $\kappa$ such that $(\nu-d\kappa)^{1}=0$ zero. Because $(\nu-d\kappa)^0$ is zero for degree reasons we have by theorem \ref{lowdegzeroiszero} that $\nu=0$ in $\HH^{\even}(\Gtl_\cA)$. Therefore the map is an embedding.

For the second statement, first note that this map is well-defined because
$(d\kappa)^0 = \mu^1\circ\kappa^0=0$. Now suppose that $\nu^0(1)=0$, then we will construct an even $ ε $ such that $ (dε)^1 (α) = ν^1 (α) $ for all indecomposable angles. Because $\nu^0(1)=(dε)^0=0$ both $(dε)^1$ and $\nu^1$ are derivations and therefore $(dε)^1=\nu^1$. This means that $(\nu-d\epsilon)^{\le 1}=0$ and hence $\nu=0$ in $\HH^{\odd}(\Gtl_\cA)$. Therefore the map is an embedding.

To construct $\epsilon$, let us first make sure that $ ν^1 (α) $ does not include the identity. Indeed let $ α: a → a $ be an indecomposable angle. Since $ a $ is not contractible by assumption, $ α $ is not the only angle in the polygon. In particular, $ α $ does not go from head to tail of $ a $ or from tail to head of $ a $. We conclude $ α $ is even and $ ν^1 (α) $ is odd, hence does not include an identity.

Regard the polygon that $ α $ sits in. By abuse of wording, let us say “head” and “tail” of a polygon's arc $ a $ to mean the vertex at the clockwise end and at the counterclockwise end of $ a $. Independent of arrow directions, $ ν^1 (α) $ can be decomposed into a part running from the tail of $ a $ to the tail of $ b $ and a part running from the head of $ a $ to the head of $ b $. Write this decomposition as $ ν^1 (α) = α δ^{(α)} + γ^{(α)} α $. Whether $ α $ is even or odd, $ δ^{(α)} $ and $ γ^{(α)} $ are always odd since $ ν^1 $ is.

Now let $ a $ be an arc. Denote by $ α_1 $, $ α_2 $, $ α_3 $, $ α_4 $ the angles incident at $ a $ as in \autoref{fig:hochschild-odd-arc-surrounding-angles}. We have
\begin{align*}
0 &= (dν) (α_2, α_4) = (-1)^{‖α_4‖} μ^2 (ν^1 (α_2), α_4) + μ^2 (α_2, ν^1 (α_4)) + ν^1 (μ^2 (α_2, α_4)) \\
&= - γ^{(α_2)} α_2 α_4 - α_2 δ^{(α_2)} α_4 + (-1)^{‖α_4‖} α_2 γ^{(α_4)} α_4 + (-1)^{‖α_4‖} α_2 α_4 δ^{(α_4)} + 0 \\
&= α_2 ((-1)^{‖α_4‖} γ^{(α_4)} - δ^{(α_2)}) α_4.
\end{align*}
We conclude $ δ^{(α_2)} = (-1)^{‖α_4‖} γ^{(α_4)} $. Note that we have used that $ α_2 α_4 = 0 $ and that $ γ^{(α_4)} $ and $ δ^{(α_2)} $ both run from the tail of $ a $ to the head of $ a $, and that $ α_4 $ ends at the tail of $ a $ and $ α_2 $ starts at the head of $ a $. Similarly,
\begin{align*}
0 &= (dν) (α_3, α_1) = (-1)^{‖α_1‖} μ^2 (ν^1 (α_3), α_1) + μ^2 (α_3, ν^1 (α_1)) + ν^1 (μ^2 (α_3, α_1)) \\
&= - α_3 δ^{(α_3)} α_1 - γ^{(α_3)} α_3 α_1 + (-1)^{‖α_1‖} α_3 α_1 δ^{(α_1)} + (-1)^{‖α_1‖} α_3 γ^{(α_1)} α_1 + 0 \\
&= α_3 (-δ^{(α_3)} + (-1)^{‖α_1‖} γ^{(α_1)}) α_1.
\end{align*}
We conclude $ δ^{(α_3)} = (-1)^{‖α_1‖} γ^{(α_1)} $. Let us now put
\begin{equation*}
ε^0_a ≔ (-1)^{‖α_4‖ + 1} γ^{(α_4)} - δ^{(α_3)} = (-1)^{‖α_1‖ + 1} γ^{(α_1)} - δ^{(α_2)}.
\end{equation*}
The expression can be read independent on the arrow direction of $ a $: It stays invariant under rotating the labels $ α_1 $, $ α_2 $, $ α_3 $ and $ α_4 $ by 180 degrees. In other words, $ ε^0_a $ can be seen either as the (signed) difference of $ γ $ and $ δ $ of its incident angles at its head or at its tail. This makes it easy to check $ (dε) (α) = ν^1 (α) $ for any indecomposable angle $ α: a → b $. Denote by $ β_1 $ and $ β_2 $ its predecessor and successor indecomposable angles around the same puncture as in \autoref{fig:hochschild-odd-angle-surrounding-angles}. Then
\begin{align*}
(dε) (α) &= (-1)^{|α|} ε^0_b α - α ε^0_a \\
&= (-1)^{|α|} ((-1)^{‖α‖ + 1} γ^{(α)} - δ^{(β_2)}) α - α ((-1)^{‖β_1‖ + 1} γ^{(β_1)} - δ^{(α)}) \\
&= γ^{(α)} α + (-1)^{‖α‖} δ^{(β_2)} α + (-1)^{‖β_1‖} α γ^{(β_1)} + α δ^{(α)} \\
&= γ^{(α)} α + α δ^{(α)} = ν^1 (α).
\end{align*}
In the penultimate equality, we have used that $ δ^{(β_2)} $ ends where $ α $ starts and is odd, therefore not composable with $ α $. Similarly $ α γ^{(β_1)} = 0 $. We conclude that $ (dε)^1 = ν^1 $ on indecomposable angles.
\end{proof}

\begin{figure}
\begin{center}
\begin{subfigure}{0.25\linewidth}
\centering
\begin{tikzpicture}
\path[draw, semithick, ->] (0, 0) -- ++(0, 1.5) node[midway, left] {$ a $} coordinate[pos=0.3] (alpha3-start) coordinate[pos=0.7] (alpha1-end);
\path[draw] (0, 1.5) -- ++(150:1) coordinate[pos=0.5] (alpha1-start);
\path[draw] (0, 1.5) -- ++(30:1) coordinate[pos=0.5] (alpha2-end);
\path[draw] (0, 0) -- ++(210:1) coordinate[pos=0.5] (alpha3-end);
\path[draw] (0, 0) -- ++(330:1) coordinate[pos=0.5] (alpha4-start);
\path[draw, bend right=65, ->] (alpha1-start) to (alpha1-end);
\path[draw, bend right=65, ->] (alpha1-end) to (alpha2-end);
\path[draw, bend right=65, ->] (alpha3-start) to (alpha3-end);
\path[draw, bend right=65, ->] (alpha4-start) to  (alpha3-start);
\draw (-.75,1.5) node{$\alpha_1$};
\draw (.75,1.5) node{$\alpha_2$};
\draw (-.75,0) node{$\alpha_3$};
\draw (.75,0) node{$\alpha_4$};
\end{tikzpicture}

\caption{Surrounding angles}
\label{fig:hochschild-odd-arc-surrounding-angles}
\end{subfigure}
\begin{subfigure}{0.25\linewidth}
\centering
\begin{tikzpicture}
\path[draw] (0, 0) -- ++(225:1.75) coordinate[pos=0.66] (beta2-start) -- ++(315:1.75) coordinate[pos=0.34] (beta1-end);
\path[draw, ->, bend right=30] (beta1-end) to node[midway, right] {$ α $} (beta2-start);
\path[draw, ->] (beta2-start) arc(45:120:0.6) node[midway, above] {$ β_2 $};
\path[draw, <-] (beta1-end) arc(315:240:0.6) node[midway, below] {$ β_1 $};
\end{tikzpicture}
\caption{Surrounding angles}
\label{fig:hochschild-odd-angle-surrounding-angles}
\end{subfigure}
\end{center}
\caption{}
\end{figure}

\begin{remark}
If $\nu$ is an even cocycle then $\nu^0$ is zero, but if $\nu$ has odd parity then $\nu^1$ need not to be zero or not even a derivation. An example of this occurs when $m$ is a marked point surrounded by a loop arc. This loop gives rise to an orbigon of length $1$, so the corresponding product $A_\infty$-deformation has a nontrivial $\rmu^1$ if $r_{(m,1)}\ne 0$ and therefore $\nu_{(m,1),o}$, as defined in definition \ref{defoddclass}, will be an odd cocycle with nontrivial $\nu^1$.
\end{remark}

\subsection{A set of generators}

In this section we will describe a special set of Hochschild classes that span the Hochschild cohomology.

The first class we need is the unit class $\nu_\id$, which corresponds to a single nullary product: $\nu_\id^0: \k \to A[1]: \id_a \mapsto \id_a$. Clearly this is a Hochschild cycle because the $\id_a$ are strict idempotents. Its parity is odd.

\begin{definition}\label{defoddclass}
For each orbifold point $p=(m,r)\in M \times \N$ we define its orbifold point homology class as
\[
\nu_{p,o} = \frac{1}{\hbar} (\rmu - μ) \text{ with }r = \ell_m^r \hbar \in Z(A)\otimes \C[\hbar]/(\hbar^2).
\]
\end{definition}

\begin{lemma}
$\nu_{p,o}$ is a cocycle of odd parity.
\end{lemma}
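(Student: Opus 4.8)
The plan is to realize $\nu_{p,o}$ as the first-order coefficient of the one-parameter family $\rmu$ of curved $A_\infty$-structures constructed in the previous subsection, and then to read off the cocycle property from the fact that $\rmu$ satisfies the curved $A_\infty$-axioms to all orders; this is the standard principle that a first-order deformation of an $A_\infty$-structure is a Hochschild cocycle. Concretely, take $R = \C[\hbar]/(\hbar^2)$, so $R^+ = (\hbar)$, and the central element $r = \ell_m^r\hbar$, where $p = (m,r)$. The first step is to check that $\rmu$ is linear in $\hbar$. Since $r_q = 0$ for every orbifold point $q \ne p$ and $r_0 = 0$, the only surviving nullary product in the definition of $\rmu_\bullet$ is $\rmu^0_p(1) = \hbar\,\ell_m^r$, and among the $\rmu_\psi$ only the tree-gon products (empty type, which reassemble the undeformed gentle $A_\infty$-structure $\mu$, i.e.\ $\rmu$ with $r = 0$) and the products attached to orbigons of type exactly $\{p\}$ survive, the latter carrying the single factor $r_\psi = \hbar$; every orbigon whose type contains $p$ with multiplicity at least two, or contains some other orbifold point, contributes $r_\psi = 0$ since its product already involves $\hbar^2$. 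This yields $\rmu = \mu + \hbar\,\nu_{p,o}$, with $\nu_{p,o}$ exactly the cochain described, and at the same time confirms that $\rmu$ reduces to $\mu$ modulo $\hbar$, so that $\tfrac1\hbar(\rmu - \mu)$ is well defined.

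Next I would invoke \autoref{satisfies-curved-axioms}: under [NL2] both $\mu$ (the $r = 0$ case) and $\rmu$ are genuine curved $A_\infty$-structures, so $[\mu,\mu] = 0$ and $[\rmu,\rmu] = 0$ in the DGLA $\HC^\bullet(\Gtl_\cA)$. Expanding $[\rmu,\rmu] = [\mu + \hbar\nu_{p,o},\, \mu + \hbar\nu_{p,o}]$ using $R$-bilinearity, the $\hbar^0$-term vanishes by $[\mu,\mu] = 0$, the $\hbar^2$-term vanishes since $\hbar^2 = 0$, and the $\hbar^1$-term equals $[\mu,\nu_{p,o}] + [\nu_{p,o},\mu] = 2\,[\mu,\nu_{p,o}]$ by graded antisymmetry of the bracket together with $\|\mu\| = \|\nu_{p,o}\| = 1$. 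Hence $d\nu_{p,o} = [\mu,\nu_{p,o}] = 0$, that is, $\nu_{p,o}$ is a cocycle. For the parity statement I would simply note that $\mu$ and $\rmu$ both lie in $\HC^1(\Gtl_\cA)$, so their difference, and therefore $\nu_{p,o}$, lies in $\HC^{\odd}(\Gtl_\cA)$; equivalently, $\nu^0_{p,o}(1) = \ell_m^r$ is an even element of $\Gtl_\cA$ and so defines an odd Hochschild cochain.

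I do not anticipate a genuine obstacle. The two points requiring a little care are the bookkeeping in the first step — checking exactly which orbigon contributions are killed modulo $\hbar^2$ and that no overall $r_0$-curvature term is present — and verifying that the factor of $2$ and the sign in $[\mu,\nu_{p,o}] + [\nu_{p,o},\mu] = 2\,[\mu,\nu_{p,o}]$ are consistent with the sign conventions fixed for the bracket; both are routine rather than difficult.
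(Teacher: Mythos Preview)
Your proposal is correct and follows exactly the same route as the paper: the paper's proof simply notes that since $\rmu$ satisfies the Maurer-Cartan equation, its first-order part in $\hbar$ is a Hochschild cocycle. Your write-up is a careful unpacking of this one-line argument, including the bookkeeping on which orbigon contributions survive modulo $\hbar^2$ and the parity check.
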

\begin{proof}
These are clearly cocycles because $\rmu$ satisfies the Maurer-Cartan equation, which reduces to $d(\rmu)=0$ in the first order.
\end{proof}

A second set of cocycles are characterized as follows. Fix a map $\lambda: (Q_\cA)_1 \to \C$. Define the first component of $\nu_\lambda$ to be the derivation such that $\nu_{\lambda}(\alpha)=\lambda_\alpha \alpha$ and set $\nu_\lambda^{\ne 1}=0$. The derivation condition implies that for
a path $\beta=\beta_k\dots\beta_1$ we have $\nu_\lambda(\beta) = \sum_i \lambda_{\beta_i}\beta$, so it makes sense to define $\lambda_\beta := \sum_i \lambda_{\beta_i}$. 

\begin{lemma}
$\nu_\lambda$ is a cocycle if and only if for every polygon $(\alpha_N,\dots,\alpha_1)$ we have $\sum_{i} \lambda_{\alpha_i}=0$.
\end{lemma}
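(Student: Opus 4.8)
The plan is to compute the differential $d\nu_\lambda = [\mu,\nu_\lambda]$ component by component and read off the condition. Since $\nu_\lambda$ is concentrated in arity one, has $\nu_\lambda^0=0$, and the derivation $\nu_\lambda^1$ is degree-preserving — so $\nu_\lambda$ is an even class, $\|\nu_\lambda\|=0$ — the bracket collapses drastically: in $[\mu,\nu_\lambda](\alpha_r,\dots,\alpha_1)$ the only surviving terms are those in which an inner $\nu_\lambda^1$ replaces a single input of an outer $\mu^r$, together with the single term in which $\nu_\lambda^1$ is applied to $\mu^r$ of all the inputs. All signs are trivial (as $\|\nu_\lambda\|=0$), and using $\mu^1=0$ (which holds under our standing assumption [NMD]) one obtains, for every composable sequence of angle paths,
\[
(d\nu_\lambda)^r(\alpha_r,\dots,\alpha_1) = \Big(\sum_{i=1}^r \lambda_{\alpha_i}\Big)\,\mu^r(\alpha_r,\dots,\alpha_1) \;-\; \nu_\lambda^1\big(\mu^r(\alpha_r,\dots,\alpha_1)\big).
\]
For $r\le 2$ this vanishes for free: $\mu^1=0$ kills $r=1$, and for $r=2$ one has $\mu^2=\mu^2_{\ord}$ (again by [NMD], the undeformed structure has no $2$-orbigons) and $\nu_\lambda^1$ is a derivation of $\mu^2_{\ord}$, so the two summands cancel. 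Hence only the components $r\ge 3$ carry information, and there $\mu^r=\sum_{|\psi|=r}\rmu_\psi$ is the sum of the tree-gon products.

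Next I would analyse each tree-gon separately. If a tree-gon $\psi$ of length $r$ contributes to $\mu^r(\alpha_r,\dots,\alpha_1)$, then $\rmu_\psi(\alpha_r,\dots,\alpha_1)=\pm\beta$, where $\beta$ is the leftover (front or back) angle, possibly a vertex idempotent, and the $r$ inputs realise the cyclic angle sequence of $\psi$ with $\beta$ absorbed into one boundary angle. Writing $\Lambda_\psi$ for the sum of $\lambda$ over the angle sequence of $\psi$, this gives $\sum_i\lambda_{\alpha_i}=\Lambda_\psi+\lambda_\beta$ (with the convention $\lambda_\beta=0$ when $\beta$ is an idempotent, which is also where $\nu_\lambda^1(\beta)=0$). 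Substituting into the displayed identity and summing over $\psi$ yields
\[
(d\nu_\lambda)^r(\alpha_r,\dots,\alpha_1) = \sum_{|\psi|=r}\Lambda_\psi\,\rmu_\psi(\alpha_r,\dots,\alpha_1).
\]
The final ingredient is that $\Lambda$ is additive under stitching of tree-gons: on the stitching formula $(\beta_1\alpha_k,\alpha_{k-1},\dots,\alpha_2,\alpha_1\beta_l,\dots,\beta_2)$ the two merged angles contribute $\lambda_{\beta_1}+\lambda_{\alpha_k}$ and $\lambda_{\alpha_1}+\lambda_{\beta_l}$, so $\Lambda_{\psi}=\Lambda_{\psi_1}+\Lambda_{\psi_2}$ when $\psi$ is the stitch of $\psi_1$ and $\psi_2$. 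Inductively $\Lambda_\psi=\sum_f\Lambda_f$, the sum over the faces making up $\psi$ (counted with multiplicity), where $\Lambda_f=\sum_{\alpha\in f}\lambda_\alpha$ is exactly the quantity appearing in the statement.

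The ``if'' direction is now immediate: if $\Lambda_f=0$ for every face then $\Lambda_\psi=0$ for every tree-gon, and the last display shows $(d\nu_\lambda)^r=0$ for all $r\ge3$, hence $d\nu_\lambda=0$. For the ``only if'' direction I would evaluate $(d\nu_\lambda)^N$ on the bare angle sequence $(\gamma_N,\dots,\gamma_1)$ of a chosen face $f$ of size $N\ge3$ (using [NMD]). Because these angles are all indecomposable, the only tree-gon of length $N$ whose angle sequence equals them is $f$ itself — a genuinely stitched tree-gon has a decomposable entry, and an angle arrow lies in a unique face; on this input $\rmu_f$ takes the trivial extension and equals $\id_{t(\gamma_1)}$, while no other tree-gon of length $N$ contributes an identity there (outputting an identity forces trivial extension, hence forces the contributing tree-gon to be $f$). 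Therefore the coefficient of $\id_{t(\gamma_1)}$ in $(d\nu_\lambda)^N(\gamma_N,\dots,\gamma_1)$ is precisely $\Lambda_f$, and vanishing of $d\nu_\lambda$ forces $\Lambda_f=0$ for every face.

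The step I expect to be the main obstacle is the bookkeeping in the ``only if'' direction: verifying that on the bare face sequence $\mu^N$ produces the identity with coefficient one, and that no other length-$N$ tree-gon contributes an identity there, so that the equation $(d\nu_\lambda)^N=0$ genuinely isolates $\Lambda_f$. The remaining work — tracking which terms of the bracket survive, and the elementary additivity of $\Lambda$ under stitching — is routine.
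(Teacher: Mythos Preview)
Your proposal is correct and follows essentially the same approach as the paper: both reduce the ``if'' direction to the additivity of $\Lambda$ under stitching of tree-gons, and both read off the ``only if'' direction by evaluating $(d\nu_\lambda)$ on a bare face sequence. Your treatment is in fact more thorough than the paper's own proof --- you derive the closed formula $(d\nu_\lambda)^r = \sum_{|\psi|=r} \Lambda_\psi\,\rmu_\psi$, which cleanly handles the possibility of several tree-gons contributing simultaneously, and you explicitly verify that on the indecomposable face sequence only the face itself can contribute an identity (since any genuinely stitched tree-gon has composite angles); the paper glosses over both points.
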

\begin{proof}
The condition that $\sum_{i} \lambda_{\alpha_i}=0$ for every polygon implies that $\sum_{\alpha_i} \lambda_{\alpha_i}=0$ for every tree-gon $(\alpha_k,\dots,\alpha_1)$. Therefore
\[
(d\nu_\lambda)(\alpha_1,\dots, \alpha_k\beta)
=(\sum \lambda_{\alpha_i}+\lambda_\beta)\mu(\alpha_k,\dots,\alpha_1\beta)
-\nu_λ(\beta)= 0.
\]
On the other hand if $\nu^{\ne 1}=0$ then we know from lemma \ref{central-and-derivation} that $\nu$ must be a derivation, while 
for every polygon $d(\nu_\lambda)(\alpha_N,\dots, \alpha_1)=0$ implies that  $\sum_{i} \lambda_{\alpha_i}=0$.
\end{proof}

The set of all $ ν_\lambda$ with these properties form a vector space $ S $. If two elements in $S$ differ by a commutator with an element in $\k$ they will represent the same homology class. 

\begin{lemma}
$\dim  S / [\k, -] = \dim \H^1 (S, M, ℂ) = 2g - 1 + |M| $.
\end{lemma}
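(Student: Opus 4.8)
The plan is to compute the dimension of $S/[\k,-]$ directly from its definition as a space of weight functions, and then to identify the answer with $\dim \H^1(S,M,\C)$ via the Euler characteristic.

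First I would set up the linear algebra. The space $S$ consists of maps $\lambda\colon (Q_\cA)_1 \to \C$ satisfying the linear condition $\sum_{i}\lambda_{\alpha_i}=0$ for every face (polygon) $(\alpha_N,\dots,\alpha_1)$. There are $2|\cA|$ angle arrows and $|F|$ faces, so naively $\dim S \ge 2|\cA| - |F|$; I need to check that the $|F|$ face-relations are linearly independent. They are supported on disjoint-in-general but overlapping sets of arrows; linear independence follows because each angle arrow lies in exactly one face, so the ``incidence matrix'' faces$\times$arrows has a block structure with each column (arrow) hitting exactly one row (face) — hence the rows are obviously independent (each face has at least one arrow not shared, in fact all its arrows, with any other face). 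So $\dim S = 2|\cA| - |F|$.

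Next I would compute $\dim [\k,-]$, i.e. the dimension of the image of $\k \to S$ sending a tuple $(c_a)_{a\in\cA}$ to the inner derivation $[\sum_a c_a \id_a, -]$, which acts on an angle arrow $\alpha\colon a\to b$ by $\alpha \mapsto (c_b - c_a)\alpha$, i.e. $\lambda_\alpha = c_{h(\alpha)} - c_{t(\alpha)}$. This is the coboundary map of the simplicial cochain complex of the quiver $Q_\cA$ viewed as a graph, so its image has dimension $|\cA| - (\text{number of connected components of }Q_\cA) = |\cA| - 1$ since the quiver is connected. (One should double-check all such $\lambda$ genuinely lie in $S$: around a face the telescoping sum $\sum (c_{h(\alpha_i)} - c_{t(\alpha_i)})$ vanishes since $h(\alpha_i)=t(\alpha_{i+1})$ and the face is a cycle — this is exactly the computation already done in \autoref{center} and \autoref{derivations}.) Therefore
\[
\dim S/[\k,-] = (2|\cA| - |F|) - (|\cA| - 1) = |\cA| - |F| + 1.
\]
Now using that the arc collection splits $(S,M)$ into faces, the cell structure with $0$-cells $M$, $1$-cells $\cA$, $2$-cells $F$ gives $|M| - |\cA| + |F| = \chi(S) = 2 - 2g$, hence $|\cA| - |F| = |M| - 2 + 2g$, and so $\dim S/[\k,-] = 2g - 1 + |M|$.

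Finally I would identify this with $\dim \H^1(S,M,\C)$. From the long exact sequence of the pair $(S,M)$ (or directly from the cell structure), $\H^0(S,M)=0$ when $|M|\geq 1$, $\H^2(S,M)\cong \H^0(S)=\C$, and $\chi(S,M) = \chi(S) - |M| = 2 - 2g - |M|$, giving $\dim\H^1(S,M) = 1 - \chi(S,M) = 2g - 1 + |M|$, matching the count above. This also makes the identification with $S/[\k,-]$ conceptual: $S$ is the space of closed $1$-cochains on the graph $Q_\cA$ that additionally annihilate every face $2$-cell, i.e. relative $1$-cocycles, and $[\k,-]$ is precisely the relative $1$-coboundaries. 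The main (very minor) obstacle is bookkeeping the various cell counts consistently — in particular being careful that ``faces'' of the arc collection and ``$2$-cells'' agree, and that one uses $\chi(S)=2-2g$ for the closed orientable surface — but none of this is deep.
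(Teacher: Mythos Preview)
Your proposal is correct and follows essentially the same approach as the paper: compute $\dim S = 2|\cA| - |F|$ using that each angle arrow lies in exactly one face, compute $\dim[\k,-] = |\cA|-1$ from connectedness of the quiver, subtract, and convert via the Euler characteristic of the cell decomposition. You additionally spell out the identification with $\dim \H^1(S,M,\C)$, which the paper only states; one small aside --- the cross-reference to the lemmas on the center and derivations for the telescoping check is misplaced, as those lemmas do not contain that computation.
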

\begin{proof}
The dimension of $S$ is $2\#\cA - \#\{\text{faces}\}$ because there are $2$ angles arriving in each arc and every face gives one linear condition. All these conditions are linearly independent because every angle occurs only in one face. 
The kernel of the map  $\k \to [\k,-]$ is $\C$, so the image of $[\k,-]$ in $S$ is of dimension $\# \cA-1$ and therefore
\[
\dim  S / [\k, -] = 2\#\cA - \#\{\text{faces}\} - \#\cA +1= 2g-2 + |M| + 1.
\]
\end{proof}
It is easy to find a basis for $S/[\k, -]$. Look again at \autoref{fig:hochschild-odd-arc-surrounding-angles}.  For each arc $a$ there are 4 angles:
two $\alpha_2,\alpha_3$ leaving $a$ and two $\alpha_1,\alpha_2$ arriving in $a$. 
The angles $\alpha_2$ and $α_4$ sit in the face on the right of $a$ and $\alpha_1$,$α_3$ in the face on the left.

We define $\nu_{a,L}$ to be the derivation with 
\[
\lambda(\alpha_1)=1, \lambda(\alpha_3)=-1 
\]
while all other entries of $\lambda$ are zero. Similarly we define $\nu_{a,R}$ with
\[
\lambda(\alpha_2)=1, \lambda(\alpha_4)=-1.
\]
Note that because $[\id_a,-] = \nu_{a,L}-\nu_{a,R}$ we have that
up to homology $\nu_{a,L}=\nu_{a,R}$, so we will drop the subscript $R,L$ and
set $\nu_a:= \nu_{a,L}$.

\begin{lemma}
If $\cT$ is a set of arcs that forms a spanning tree in the face graph then
\[
\{ \nu_{a} \mid  a \in \cA\setminus \cT\}
\]
is a basis for $S / [\k, -]$.
\end{lemma}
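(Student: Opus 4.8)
The plan is to identify $S/[\k,-]$ with the first cohomology of the face graph and then invoke the textbook description of that cohomology via a spanning tree. Write $\Gamma$ for the face graph: its vertices are the faces $F$, its edges are the arcs $\cA$, an arc $a$ joining its left and right faces $f_L(a),f_R(a)$; since $\cA$ splits the connected surface $S$ into discs, $\Gamma$ is connected, and $\cT$ is a spanning tree by hypothesis. First note that the cardinality already matches the dimension: $\#(\cA\setminus\cT)=\#\cA-(\#F-1)$, and from the cell decomposition of $S$ with vertices $M$, edges $\cA$, faces $F$ we get $2-2g=\#M-\#\cA+\#F$, so $\#(\cA\setminus\cT)=2g-1+\#M=\dim S/[\k,-]$ by the previous lemma. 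Hence it suffices to produce an isomorphism $S/[\k,-]\cong\H^1(\Gamma;\C)$ carrying $\nu_a$ to the class $[a]$ of the edge $a$, and then to prove linear independence of the non-tree classes.

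The isomorphism will come from a set of \emph{face relations}. Fix a face $f$ and traverse $\partial f$ with $f$ on the left, meeting arcs $a_1,\dots,a_N$ and corner angles $\gamma_1,\dots,\gamma_N$ cyclically, with $\gamma_i$ the corner between $a_i$ and $a_{i+1}$. Put $\nu_{(a,f)}:=\nu_{a,L}$, $\epsilon_{a,f}:=+1$ when $f=f_L(a)$, and $\nu_{(a,f)}:=\nu_{a,R}$, $\epsilon_{a,f}:=-1$ when $f=f_R(a)$ (summing over both occurrences if $a$ lies twice on $\partial f$, once on each side). Unwinding the definitions of $\nu_{a,L}$, $\nu_{a,R}$ from \autoref{fig:hochschild-odd-arc-surrounding-angles} — each is $e_{(\text{corner of the face at the head of }a)}-e_{(\text{corner at the tail of }a)}$ — one reads off that $\epsilon_{a_i,f}\,\nu_{(a_i,f)}=e_{\gamma_i}-e_{\gamma_{i-1}}$ inside $S\subset\C^{(Q_\cA)_1}$, regardless of the orientation of $a_i$. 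Telescoping around the cycle yields $\sum_{a\subset\partial f}\epsilon_{a,f}\,\nu_{(a,f)}=0$ in $S$, hence modulo $[\k,-]$ (where $\nu_{a,L}\equiv\nu_{a,R}=\nu_a$) the relation $\sum_{a\subset\partial f}\epsilon_{a,f}\,\nu_a\equiv 0$. Moreover the constraint defining $S$ splits over faces, $S=\bigoplus_f S_f$ with $S_f$ the sum-zero subspace on the corners of $f$, and the $\nu_{(a,f)}$ for $a\subset\partial f$ are, up to sign, the usual root generators $e_{\gamma_i}-e_{\gamma_{i-1}}$ of $S_f$, so they span $S_f$; therefore the $\nu_a$ span $S/[\k,-]$. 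Thus $e_a\mapsto\nu_a$ defines a surjection $\C^{\cA}=C^1(\Gamma)\twoheadrightarrow S/[\k,-]$ which kills every coboundary $\delta e_f$ (orient $a$ from $f_R(a)$ to $f_L(a)$, so that $\delta e_f=\sum_{a\subset\partial f}\epsilon_{a,f}e_a$), hence factors through $C^1(\Gamma)/B^1(\Gamma)=\H^1(\Gamma;\C)$. Since both sides now have dimension $\#\cA-\#F+1$ (using connectivity of $\Gamma$ for the right-hand side and the previous lemma for the left), this factorization is an isomorphism $S/[\k,-]\xrightarrow{\ \sim\ }\H^1(\Gamma;\C)$ with $\nu_a\mapsto[a]$.

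It remains to prove the purely graph-theoretic statement that $\{[a]\mid a\in\cA\setminus\cT\}$ is a basis of $\H^1(\Gamma;\C)$; as its size equals $\dim\H^1(\Gamma;\C)$, linear independence is enough. Orient $\Gamma$ and, for $b\in\cA\setminus\cT$, let $z_b\in C_1(\Gamma;\C)$ be the fundamental cycle of $b$ — the edge $b$ together with the unique path in $\cT$ closing it up — so that $\partial z_b=0$ and $\langle e_a^\vee, z_b\rangle=\delta_{ab}$ for $a\notin\cT$. If $\sum_{a\notin\cT}c_a e_a=\delta\xi$ in $C^1(\Gamma)$ for some $\xi\in C^0(\Gamma)$, then for each $b$ we get $c_b=\langle\textstyle\sum_a c_a e_a, z_b\rangle=\langle\delta\xi, z_b\rangle=\langle\xi,\partial z_b\rangle=0$. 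Hence the non-tree classes are independent, thus a basis; transporting back through the isomorphism of the previous paragraph, $\{\nu_a\mid a\in\cA\setminus\cT\}$ is a basis of $S/[\k,-]$.

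The one delicate point is the sign identity $\epsilon_{a_i,f}\,\nu_{(a_i,f)}=e_{\gamma_i}-e_{\gamma_{i-1}}$ in the second paragraph: it rests on carefully tracking, through the conventions of \autoref{fig:hochschild-odd-arc-surrounding-angles} and the arc orientations, which corner of $f$ lies at the head and which at the tail of each $a_i$, and on the observation that an arc appearing twice on $\partial f$ does so once with $f$ on the left and once on the right — so that such arcs drop out of the face relation, matching the fact that loops contribute nothing to coboundaries in $\Gamma$. Everything after that is either dimension bookkeeping already done in the preceding lemmas or the standard spanning-tree description of $\H^1$ of a graph.
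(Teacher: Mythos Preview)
Your proof is correct and follows the same skeleton as the paper's: the face relations $\sum_{a\subset\partial f}\pm\nu_a=0$, then spanning-tree plus dimension count. The paper's own proof is a two-sentence sketch that asserts the face relation (without signs) and asserts independence from the spanning-tree hypothesis; you have supplied the missing work by identifying $S/[\k,-]\cong\H^1(\Gamma;\C)$ explicitly and reading off the basis from the standard fundamental-cycle pairing. Your careful sign check $\epsilon_{a_i,f}\,\nu_{(a_i,f)}=e_{\gamma_i}-e_{\gamma_{i-1}}$ and the observation that $S=\bigoplus_f S_f$ with each $S_f$ spanned by root vectors are exactly what is needed to turn the paper's assertion into an argument; the cohomological framing is a genuine clarification rather than a different method.
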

\begin{proof}
The sum of the $\nu_{a}$ where $a$ runs over the arcs going around a given face is zero. Because $\cT$ is a spanning tree, the $\{ \nu_{a} \mid  a \in \cA\setminus T\}$ are independent.
Moreover because a spanning tree has $\#\{\text{faces}\}-1$ arcs, the cardinality of this set is the dimension of $S/[\k,-]$.
\end{proof}
\begin{remark}
We will call the $\nu_a$ and more general the $\nu_\lambda$, which are linear combinations of the $\nu_a$, \emph{arc classes}.
\end{remark}

Let $p =(m,j)$ be an orbifold point and $a$ be any arc arriving (or leaving) the marked point. We now define
\[
\nu_{p,e} := \nu_{p,o} \smile \nu_{a} \text{ or $ - \nu_{a}$ if $a$ leaves $m$.}
\]
Note that if $a$ is not a loop we have that $\nu_{p,e}$ is zero for every angle arrow except for $\alpha_1$ (or $\alpha_3$ in the case $a$ leaves $m$). 

\begin{lemma}
If the arc collection $\cA$ satisfies [NL2] then
the homology class of $\nu_{p,e}$ does not depend on the choice of $a$.
\end{lemma}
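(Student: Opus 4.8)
The plan is to reduce the claim, via Theorem~\ref{embeddings}(1), to a statement about outer derivations. First I would check that $\nu_{p,e}$ is a cocycle of even $\infty$-degree: it is a cup product of the cocycles $\nu_{p,o}$ and $\pm\nu_a$, and the cup product raises $\infty$-degree by one, so $\|\nu_{p,e}\| = \|\nu_{p,o}\| + \|\nu_a\| + 1 \equiv 1 + 0 + 1 \equiv 0$. Its nullary component vanishes, the only term in $(\nu_{p,o}\smile\nu_a)^0$ being $\pm\mu^2(\nu_{p,o}^0, \nu_a^0) = 0$ since $\nu_a^0 = 0$. Hence by Lemma~\ref{central-and-derivation} the first component $\nu_{p,e}^1$ is a derivation of $\Gtl_\cA$, and by Theorem~\ref{embeddings}(1) the map $\nu\mapsto\nu^1$ embeds $\HH^{\even}(\Gtl_\cA)$ into $\mathrm{OutDer}(\Gtl_\cA)$. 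So it suffices to prove that the outer derivation represented by $\nu_{p,e}^1$ is independent of $a$.

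Next I would compute $\nu_{p,e}^1$ on angle arrows from the cup product formula. Since [NL2] holds we have $\nu_{p,o}^1 = 0$ and $\mu^2 = \rmu^2_{\ord}$, while $\nu_a^0 = 0$ by construction, so all but one summand of $(\nu_{p,o}\smile\nu_a)^1(\alpha)$ vanish and one is left with $\pm\mu^2(\nu_{p,o}^0,\nu_a^1(\alpha))$. As $\nu_{p,o}^0 = \ell_m^j$ (the only contribution being the curvature $\rmu^0 = \ell_m^j\hbar$), $\nu_a^1(\alpha)$ a scalar multiple of $\alpha$, and $\mu^2$ the ordinary product up to sign, this is a scalar multiple of $\ell_m^j\alpha$, hence $0$ unless $m(\alpha)=m$. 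Writing $D_\gamma$ for the derivation of $\Gtl_\cA$ with $D_\gamma(\gamma)=\ell_m^j\gamma$ and $D_\gamma=0$ on every other angle arrow (well defined because $\ell_m$ is central by Lemma~\ref{center}), and using that [NL2] forces exactly one of the two arrows in the support of $\nu_a^1$ — one at each endpoint of $a$, carrying weights $+1$ and $-1$ — to lie at $m$, call it $\gamma_a$, we obtain $\nu_{p,e}^1 = c_a\,D_{\gamma_a}$ for a sign $c_a\in\{\pm1\}$. The hard part will be the sign bookkeeping: $c_a$ combines the $A_\infty$- and cup-product signs with the $\pm1$ weight that $\gamma_a$ carries inside $\nu_a$ and with whether $m$ is the head or the tail of $a$, and one must check that the ``$-\nu_a$ if $a$ leaves $m$'' in the definition of $\nu_{p,e}$ is exactly what makes $c_a=+1$ for every $a$.

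Finally I would show that $[D_\gamma]\in\mathrm{OutDer}(\Gtl_\cA)$ is the same for every angle arrow $\gamma$ at $m$. Because [NL2] forbids loop arcs, the arcs $b_1,\dots,b_s$ incident to $m$ are pairwise distinct and the angle arrows $\alpha_1,\dots,\alpha_s$ at $m$ form an oriented cycle with $h(\alpha_i)=t(\alpha_{i+1})=b_{i+1}$, the only angle arrows at $m$ incident to $b_{i+1}$ being $\alpha_i$ and $\alpha_{i+1}$. Using centrality of $\ell_m$, the identity $\ell_m^j\beta=0$ for angle arrows $\beta$ not at $m$, and the fact that $\id_{b_{i+1}}\beta$ equals $\beta$ when $h(\beta)=b_{i+1}$ and $0$ otherwise (and similarly for right multiplication), a direct computation shows that the inner derivation $[\,\ell_m^j\id_{b_{i+1}},-\,]$ sends $\alpha_i\mapsto\ell_m^j\alpha_i$ and $\alpha_{i+1}\mapsto-\ell_m^j\alpha_{i+1}$ and kills every other angle arrow; that is, $D_{\alpha_i}-D_{\alpha_{i+1}}=[\,\ell_m^j\id_{b_{i+1}},-\,]$ is inner. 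Telescoping around the cycle, $D_\gamma-D_{\gamma'}$ is inner for any two angle arrows $\gamma,\gamma'$ at $m$. Combined with the second step, $[\nu_{p,e}^1]=[D_{\gamma_a}]$ is independent of $a$ in $\mathrm{OutDer}(\Gtl_\cA)$, and injectivity in Theorem~\ref{embeddings}(1) then gives that $[\nu_{p,e}]\in\HH^{\even}(\Gtl_\cA)$ is independent of $a$.
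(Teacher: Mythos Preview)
Your argument is correct and follows essentially the same route as the paper. Both proofs hinge on the observation that the inner derivation $[\ell_m^j\id_b,-]$ (equivalently $(d\varepsilon)^1$ for $\varepsilon^0=\ell_m^j\id_b$) realizes the difference $D_{\alpha_i}-D_{\alpha_{i+1}}$, and both reduce to the first component using the results of \S\ref{reductiontozeroandfirstcomponent}: you invoke the packaged injectivity statement of Theorem~\ref{embeddings}(1), whereas the paper appeals directly to Theorem~\ref{lowdegzeroiszero}. The sign bookkeeping you flag as ``the hard part'' is also left implicit in the paper, so your level of detail matches.
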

\begin{proof}
Let $ a $ and $ b $ be neighboring arcs incident at the same puncture, such that $ b $ comes after $ a $ in clockwise order. Let $ ε = ε^0 $ be the odd cochain given by $ ε^0 = ℓ_m^j \id_a $. For any angle $ α $ winding around puncture $ \mathrm{m} (α) $ we have
\[
d(ε)(\alpha) = \begin{cases}
+ \ell_m^j\alpha & t(\alpha)=a \text{ and } \mathrm{m} (α) = m \\
- \ell_m^j\alpha & h(\alpha)=a \text{ and } \mathrm{m} (α) = m \\
0& \text{otherwise}
\end{cases}
\]
Because $a$ is not a loop, the two nonzero cases each happen for just one $\alpha_1$ and $\alpha_3$ angle. We conclude
\[
[d(ε)]^1 = [\nu_{p,e} \smile \nu_{a} - \nu_{p,e} \smile \nu_{b}]^1.
\]
In other words the difference between $\nu_{p,e} \smile \nu_{a}$ and $\nu_{p,e} \smile \nu_{b}$ is homotopic to a cocycle $\kappa$ with $\kappa^0,\kappa^1=0$. 
In combination with theorem \ref{lowdegzeroiszero} this means that two consecutive arcs around $m$ define the same $\nu_{p,e}$-class, and hence all arcs around $m$ do.
\end{proof}

\begin{theorem}
\label{th:paper1-hochschild}
Let $\cA$ be an arc collection and $\cT$ a spanning tree in the face graph.
\begin{enumerate}
\item
The cocycles $\nu_\id$, $\nu_{p,o}$ where $p$ runs over all orbifold points form a basis for $\HH^{\odd}(\Gtl_\cA)$.
\item
If $\cA$ satisfies [NL2] then the cocycles $\nu_{p,e}$, $\nu_{a}$ where $p$ runs over all orbifold points and $a \in \cA \setminus \cT$ form a basis for $\HH^{\even}(\Gtl_\cA)$.
\end{enumerate}
\end{theorem}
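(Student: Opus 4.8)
The plan is to deduce both statements from the two embeddings of \autoref{embeddings}, after pinning down the image of each. For (1), apply $\HH^{\odd}(\Gtl_\cA)\hookrightarrow Z(\Gtl_\cA)$, $\nu\mapsto\nu^0(1)$. Unwinding the definitions, $\nu_\id^0(1)=1$, and for $p=(m,r)$ the only nullary product that $\rmu$ contributes for the central element $\ell_m^r\hbar$ is $\rmu_p^0(1)=\hbar\,\ell_m^r$, so $\nu_{p,o}^0(1)=\ell_m^r=\ell_p$. By \autoref{center} the unit together with the $\ell_m^r$ ($m\in M$, $r\ge 1$) form a $\C$-basis of $Z(\Gtl_\cA)$, and these powers are exactly indexed by the orbifold points; hence the embedding carries $\{\nu_\id\}\cup\{\nu_{p,o}\}$ onto a basis of $Z(\Gtl_\cA)$. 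Being injective it is then an isomorphism, whence $\{\nu_\id\}\cup\{\nu_{p,o}\}$ is a basis of $\HH^{\odd}(\Gtl_\cA)$.

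For (2), use $\HH^{\even}(\Gtl_\cA)\hookrightarrow\mathrm{OutDer}(\Gtl_\cA)$, $\nu\mapsto\nu^1$, and first describe its target. Combining \autoref{derivations} with \autoref{center}, every outer derivation is, modulo inner ones, of reduced form $\alpha\mapsto\big(\sum_{r\ge 0}c_{\alpha,r}\,\ell_{m(\alpha)}^{\,r}\big)\alpha$, and $\mathrm{OutDer}(\Gtl_\cA)$ splits by the powers of $\ell$ that occur as $\mathrm{OutDer}_0\oplus\bigoplus_p\mathrm{OutDer}_p$, where $\mathrm{OutDer}_0$ is spanned by the classes of the Euler derivations $\sum_\alpha c_\alpha E_\alpha$ and, since [NL2] forbids loops (so each arc occurs once among the arcs around a given marked point), each $\mathrm{OutDer}_p$ with $p=(m,r)$ is one-dimensional, spanned by the class of $\ell_m^r E_\alpha$ for any angle $\alpha$ around $m$ — the remaining inner derivations in that degree being the $[\ell_m^r\id_a,-]=\ell_m^r(E_{\alpha^{\mathrm{in}}_a}-E_{\alpha^{\mathrm{out}}_a})$, which span exactly the hyperplane of ``total zero''. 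Now each $\nu_a$ is one of the cocycles $\nu_\lambda$, so $\nu_a^1$ lies in the subspace of $\mathrm{OutDer}_0$ cut out by the face relations $\sum_{\alpha\in f}\lambda_\alpha=0$, which the last lemma above identifies with $S/[\k,-]$ and for which $\{\nu_a:a\in\cA\setminus\cT\}$ is a basis; and a direct cup-product computation gives $\nu_{p,e}^1=\pm\ell_m^r E_{\alpha_1}$ for a certain angle $\alpha_1$ around $m$ — the only surviving degree-$1$ term of $\nu_{p,o}\smile\nu_a$ is $\pm\mu^2(\ell_p,\nu_a^1(-))$ and $\ell_m^r$ annihilates the angles not around $m$ — a class of nonzero total, so $\nu_{p,e}^1$ spans $\mathrm{OutDer}_p$.

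The crux is to show, conversely, that the image of $\nu\mapsto\nu^1$ is contained in $(S/[\k,-])\oplus\bigoplus_p\mathrm{OutDer}_p$; equivalently, that for every even cocycle $\nu$ the degree-zero part of the reduced $\nu^1$ obeys the face relations. Given such $\nu$, fix a face $f=(\alpha_N,\dots,\alpha_1)$ and an angle $\delta$ with $\alpha_1\delta\ne 0$, and read off the coefficient of the angle arrow $\delta$ in the cocycle identity $(d\nu)(\alpha_N,\dots,\alpha_2,\alpha_1\delta)=0$. Using the disk relation $\mu^N(\alpha_N,\dots,\alpha_2,\alpha_1\delta)=\pm\delta$, the terms inserting $\nu^1$ into each slot together with $\nu^1(\mu^N(\cdots))$ contribute $\pm\big(\sum_{i=1}^N c_{\alpha_i,0}\big)\delta$, exactly as in the cocycle criterion for the $\nu_\lambda$; every other summand feeds a higher component $\nu^{\ge 2}$ into an outer $\mu$, and the bookkeeping of the proof of \autoref{satisfies-curved-axioms} — each $\mu_\psi$ returns only the front or the back angle of a uniquely determined fused input — shows that none of these can reproduce $\delta$. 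Hence $\sum_{\alpha\in f}c_{\alpha,0}=0$ for every face $f$. I expect this case-check to be the main obstacle: it is bookkeeping-heavy but entirely parallel to computations already carried out.

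Granting this, span and independence are formal. Given an even cocycle $\nu$, subtract the unique $\sum_p d_p\nu_{p,e}$ matching $\nu^1$ in each degree-$p$ summand and then gauge away the leftover inner part; the result has $\nu^1=\sum_\alpha c_\alpha E_\alpha$ with $\sum_{\alpha\in f}c_\alpha=0$ for all $f$, hence $\nu^1=\nu_\lambda^1$ for some $\lambda\in S$, so $(\nu-\nu_\lambda)^0=(\nu-\nu_\lambda)^1=0$ and \autoref{lowdegzeroiszero} gives $\nu=\nu_\lambda$ in $\HH^{\even}$; finally $\nu_\lambda$ equals a combination of the $\nu_a$ modulo $[\k,-]$, and every element of $[\k,-]$ is a coboundary, being $d\kappa$ for the cochain $\kappa$ with $\kappa^0(1)\in\k$ the corresponding combination of idempotents. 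For independence, apply $\nu\mapsto\nu^1$ to a vanishing class $\sum_a c_a\nu_a+\sum_p d_p\nu_{p,e}$: by injectivity the resulting outer derivation vanishes, and projecting to the graded summands forces all $d_p=0$ (each $\nu_{p,e}^1$ spans $\mathrm{OutDer}_p$) and then all $c_a=0$ (the $\nu_a^1$ are independent in $S/[\k,-]$).
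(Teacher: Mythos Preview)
Your proposal follows the paper's own approach almost exactly: both parts are deduced from the embeddings of \autoref{embeddings}, with the odd case matching the center via \autoref{center} and the even case matching outer derivations via \autoref{derivations}. Part~(1) is identical to the paper's one-line argument. For part~(2) the paper is extremely terse --- it decomposes by $G$-degree and simply asserts that in $G$-degree~$0$ the cocycle ``must be a linear combination of the $\nu_a$'' --- whereas you correctly identify that this assertion hides a nontrivial step: one must show that for an arbitrary even cocycle $\nu$, the degree-$0$ part of $\nu^1$ satisfies the face relations $\sum_{\alpha\in f} c_{\alpha,0}=0$. You are right that this is the crux, and your overall strategy (evaluate $d\nu$ on a polygon-type sequence and isolate the relevant coefficient) is the correct one.

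The only weak point is your justification of the crux by appeal to ``the bookkeeping of the proof of \autoref{satisfies-curved-axioms}''. That proposition analyses double products of the specific deformed structure $\rmu$, not an arbitrary $\nu^{\ge 2}$ fed into the undeformed $\mu$; the case analysis there does not literally transfer. A clean and short argument that does work: take $\nu$ $G$-homogeneous of degree~$0$ (so $\nu^1(\alpha)=c_\alpha\alpha$ exactly, no $\ell$-corrections) and evaluate $(d\nu)(\alpha_N,\dots,\alpha_1)$ on the polygon sequence itself --- no auxiliary $\delta$ is needed. The output lies in $\id_{a_1}\Gtl_{\cA}\id_{a_1}$ in $G$-degree~$0$, hence is a scalar multiple of $\id_{a_1}$. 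The $\nu^1$-terms contribute $\pm(\sum_i c_{\alpha_i})\id_{a_1}$. For a $\nu^{k}$-term with $k\ge 2$, the outer $\mu^{N-k+1}$ is either $\mu^{\le 2}$ (which cannot produce an identity from nontrivial inputs) or a $\mu_\psi$ for a tree-gon $\psi$ of length $N-k+1<N$ whose sequence contains some indecomposable face angle $\alpha_i$ of $f$ as a standalone entry; but then $f$ is a face of $\psi$, and since under [NMD] stitching strictly increases length, any tree-gon containing $f$ has length $\ge N$ --- contradiction. So the $\nu^{\ge 2}$-contributions vanish and the face relation follows. With this fix, your argument is complete and in fact more detailed than the paper's.
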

\begin{proof}
The zeroth components of $\nu_\id$, $\nu_{p,o}$ form a basis for $Z(\Gtl_\cA)$, so the embedding $\HH^{\odd}(\Gtl_{\cA})\to Z(\Gtl_\cA)$ in lemma \ref{embeddings} is a bijection.

To show the second part, note that the fact that there are no loops or two-cycles implies that all nonzero angle paths with the same head and tail as $\alpha$ turn around the same marked point as $\alpha$ and are of the form $\ell_m^r\alpha$. 

If $\nu$ is $G$-homogeneous this means that there are two possibilities
\begin{itemize}
    \item If the degree is $0$ then $\nu$ must be a linear combination of the $\nu_{a}$. 
    \item $\nu(\alpha)=\lambda_\alpha\ell_m^j\alpha$ with $j>0$ then the only other $\beta$ for which $\nu(\beta)\ne 0$ must turn around $m$. Up to an inner derivation this $\nu$ is determined by the sum $\sum\lambda_\alpha$ where $\alpha$ runs over the indecomposables turning around $m$. 
\end{itemize}
\end{proof}

\subsection{Gerstenhaber structure product on Hochschild cohomology}
In this section, we compute the Gerstenhaber algebra structure on $ \HH(\Gtl_{\cA})$. In other words, we determine the bracket and the cup product on Hochschild cohomology. The bracket agrees with the computations by Wong for the Borel-Moore cohomology of matrix factorizations for dimer models \cite{wong2021dimer}, which is conjecturally equivalent to $\HH(\Gtl_\cA)$.

To describe the bracket and cup product, we use the basis for odd and even Hochschild cohomology constructed earlier: the odd classes $\nu_{(m,j),o}$ for a puncture $m \in M$ and $ j ≥ 0 $, the even classes $\nu_{(m,j),e}$, and the arc classes $ ν_\lambda$ with $\lambda: (Q_\cA)_1 \to \C$. 

We start with the cup-product
\begin{proposition}\label{calculatecup}
Let $ m,n \in M $ be two marked points, let $ i, j \ge 1 $ be two indices, and $ ν_\lambda, ν_\kappa$ two arc classes. Then the cup product in cohomology reads as follows:
\begin{alignat*}{2}
\nu_{(m,i),o} &\smile \nu_{(n,j),o} &&=  δ_{mn} \nu_{(m,i+j),o}, \\
\nu_{(m,i),o} &\smile \nu_{(n,j),e} &&=  δ_{mn} \nu_{(m,i+j),e}, \\
\nu_{(m,i),o} &\smile \nu_{\lambda} &&= \lambda_{\ell_m}\nu_{(m,i),e}, \\
\nu_{\kappa} &\smile \nu_{\lambda} &&= 0, \\
\nu_{(p,i),e} &\smile \nu_{\lambda} &&= 0. 
\end{alignat*}
\end{proposition}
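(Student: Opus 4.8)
The plan is to reduce everything to the zeroth and first components via Theorem~\ref{lowdegzeroiszero} and Theorem~\ref{embeddings}: an odd cohomology class is determined by its $0$th component (a central element), an even class by its $1$st component (an outer derivation), and a cocycle whose $0$th and $1$st components both vanish is a coboundary. So for each of the five products it suffices to compute the relevant low component of a representing cocycle. Two ingredients make this cheap. Specializing the definition of $\smile$ to no inputs gives
\[
(\kappa \smile \nu)^0(1) = (-1)^{\|\nu\|+1}\,\mu^2\big(\kappa^0(1),\nu^0(1)\big),
\]
and more generally the inner insertion of $\kappa$ (resp.\ $\nu$) in $(\kappa\smile\nu)^1(\gamma)$ consumes at most one of the entries. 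Moreover, under [NL2] we have $\rmu^1=0$, so $\nu_{(m,j),o}^0(1)=\ell_m^j$ and $\nu_{(m,j),o}^1=0$, while every arc class $\nu_\lambda$ and every even class has vanishing $0$th component.

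The two vanishing identities are then immediate. If $\kappa$ and $\nu$ are both even classes, then $\kappa^0=\nu^0=0$, so $(\kappa\smile\nu)^0(1)=\pm\mu^2(0,0)=0$; and in $(\kappa\smile\nu)^1(\gamma)$ each summand carries a factor $\kappa^0$ or $\nu^0$ — the inner map is evaluated either on zero entries, or on the single entry $\gamma$ with the other inner map then on zero — hence vanishes. Thus $(\kappa\smile\nu)^{0,1}=0$, and $\kappa\smile\nu=0$ in Hochschild cohomology by Theorem~\ref{lowdegzeroiszero}. Applying this to $(\nu_\kappa,\nu_\lambda)$ and to $(\nu_{(p,i),e},\nu_\lambda)$ yields $\nu_\kappa\smile\nu_\lambda=0$ and $\nu_{(p,i),e}\smile\nu_\lambda=0$.

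For $\nu_{(m,i),o}\smile\nu_{(n,j),o}$, which is odd, the displayed formula with $\|\nu_{(n,j),o}\|=1$ gives $0$th component $\mu^2(\ell_m^i,\ell_n^j)=(-1)^{|\ell_n^j|}\ell_m^i\cdot\ell_n^j=\ell_m^i\cdot\ell_n^j$ (central elements are of even parity), which by Lemma~\ref{center} equals $\delta_{mn}\ell_m^{i+j}=\delta_{mn}\,\nu_{(m,i+j),o}^0(1)$; Theorem~\ref{embeddings} then gives $\nu_{(m,i),o}\smile\nu_{(n,j),o}=\delta_{mn}\,\nu_{(m,i+j),o}$. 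The identity for $\nu_{(m,i),o}\smile\nu_{(n,j),e}$ follows by associativity of the cup product on cohomology: picking an arc $a$ arriving at $n$, so that $\nu_{(n,j),e}=\nu_{(n,j),o}\smile\nu_a$, we obtain
\[
\nu_{(m,i),o}\smile\nu_{(n,j),e}=(\nu_{(m,i),o}\smile\nu_{(n,j),o})\smile\nu_a=\delta_{mn}\,\nu_{(m,i+j),o}\smile\nu_a,
\]
which for $m=n$ equals $\delta_{mn}\nu_{(m,i+j),e}$ by the definition of $\nu_{(m,i+j),e}$ (the arc $a$ being incident at $m=n$), and vanishes for $m\ne n$.

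It remains to treat $\nu_{(m,i),o}\smile\nu_\lambda$, which carries the real bookkeeping. Writing $\nu_\lambda=\sum_{a\in\cA\setminus\cT}c_a\nu_a$ in cohomology, I will show $\nu_{(m,i),o}\smile\nu_a=\epsilon_a(m)\,\nu_{(m,i),e}$, where $\epsilon_a(m)$ is $+1$, $-1$, or $0$ according as $a$ arrives at $m$, leaves $m$, or is not incident at $m$. If $a$ is incident at $m$, this is exactly the definition of $\nu_{(m,i),e}$ together with its sign convention and the earlier lemma that, under [NL2], the class $\nu_{(m,i),e}$ is independent of the chosen arc at $m$. If $a$ is not incident at $m$, then $\nu_{(m,i),o}\smile\nu_a$ is even with vanishing $0$th component, and since $\nu_a^0=0$ and $\nu_{(m,i),o}^1=0$ the only possible contribution to $(\nu_{(m,i),o}\smile\nu_a)^1(\gamma)$ is $\pm\mu^2(\ell_m^i,\lambda^{(a)}_\gamma\gamma)$, which involves $\ell_m^i\cdot\gamma$; but $\lambda^{(a)}$ is supported on the two angles at the endpoints of $a$, neither of which winds around $m$, so $\ell_m^i\cdot\gamma=0$. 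Hence $(\nu_{(m,i),o}\smile\nu_a)^{0,1}=0$ and the cup product is zero by Theorem~\ref{lowdegzeroiszero}, matching $\epsilon_a(m)=0$. Summing, $\nu_{(m,i),o}\smile\nu_\lambda=\big(\sum_a c_a\epsilon_a(m)\big)\nu_{(m,i),e}$, and finally $\sum_a c_a\epsilon_a(m)=\lambda_{\ell_m}$: indeed $\epsilon_a(m)=\sum_{\alpha\in\ell_m}\lambda^{(a)}_\alpha$ since $\lambda^{(a)}$ equals $\pm1$ exactly on the two angles at the endpoints of $a$, and the functional $\lambda\mapsto\lambda_{\ell_m}=\sum_{\alpha\in\ell_m}\lambda_\alpha$ is unchanged when $\nu_\lambda$ is modified by a coboundary $[\id_b,-]\in[\k,-]$, because $\sum_{\alpha\in\ell_m}\big([b=h(\alpha)]-[b=t(\alpha)]\big)=0$ as the angle arrows around $m$ form a cycle. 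The main obstacle is precisely this last identity: one has to keep track of which of the two angles at the endpoints of $a$ lies at $m$, of the signs built into the definition of $\nu_{(m,i),e}$, and of the $[\k,-]$-invariance of $\lambda\mapsto\lambda_{\ell_m}$; once the reduction theorems and the arc-independence lemma are in place, everything else is routine.
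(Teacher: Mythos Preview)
Your proof is correct and follows the same strategy as the paper: reduce each product to its zeroth or first component and invoke Theorem~\ref{embeddings}. Two minor remarks on the comparison. First, for the last two identities the cup product of two even classes is \emph{odd} (since $\smile$ has odd parity), so by Theorem~\ref{embeddings}(2) it suffices to observe that the zeroth component vanishes; your additional verification that the first component also vanishes is correct but unnecessary, and the paper omits it. Second, for $\nu_{(m,i),o}\smile\nu_\lambda$ the paper computes the first component directly as the derivation $\alpha\mapsto \lambda_\alpha\,\ell_m^i\alpha$ (nonzero only for $\alpha$ around $m$) and then identifies it with $\lambda_{\ell_m}\,\nu_{(m,i),e}$ up to an inner derivation, using the observation in the proof of Theorem~\ref{th:paper1-hochschild} that such a derivation is determined modulo inner ones by $\sum_{\alpha\in\ell_m}\lambda_\alpha$. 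Your route via the basis decomposition $\nu_\lambda=\sum_a c_a\nu_a$, the arc-independence lemma, and the $[\k,-]$-invariance of $\lambda\mapsto\lambda_{\ell_m}$ is an equally valid variant that makes the coefficient $\lambda_{\ell_m}$ emerge more combinatorially.
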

\begin{proof}
We compute the cup products of the given Hochschild cocycles first on chain level. Then we compute their projection to cohomology. 
In fact, for the odd products it suffices to compute the curvature component $ (ν \smile η)^0 $ and for the even products it suffices to compute the first component $ (ν \smile η)^1 $. Also recall that the parity of $\smile$ itself is odd,
 so the product of two classes with the same (different) parity is odd (even). 
We are now ready to start the calculations. For the first identity, regard
\begin{equation*}
(\nu_{(m,i),o} \smile \nu_{(n,j),o})^0 = μ^2(\nu_{(m,i),o}^0(1),\nu_{(n,j),o}^0(1))=\delta_{mn} μ^2 (ℓ_m^i, ℓ_n^j) = ℓ_m^{i+j}.
\end{equation*}
This is precisely the curvature of the Hochschild cohomology class $\nu_{(m,i+j),o}$ and hence projects to it. 
For the second identity and third identity one can do a similar calculation but
now one has to look a the first component. Alternatively, we can use the definition of $\nu_{(p,i),e}$ in combination with the associativity of $\smile$ on the Hochschild cohomology. 

For the last two identities we again have to look at the zeroth component and these are trivially zero because both factors have trivial curvature. 
\end{proof}

\begin{proposition}\label{calculatebracket}
Let $ m,n \in M $ be two marked points, let $ i, j \ge 1 $ be two indices, and $ ν_\lambda, ν_\kappa$ two arc classes. Then the Gerstenhaber bracket in cohomology reads as follows:
\begin{align*}
[\nu_{(m,i),o}, \nu_{(n,j),o}] &= 0, \\
\left[\nu_{(m,i),e},\nu_{(n,j),o}\right] &= δ_{mn}· j · \nu_{(m,i+j),o},\\
\left[\nu_{(m,i),e},\nu_{(n,j),e}\right] &= δ_{mn} ·(j-i) · \nu_{(m,i+j),e}, \\
\left[ν_\lambda, \nu_{(m,i),o}\right] &= i λ_{ℓ_m} · \nu_{(m,i),o}, \\
\left[\nu_\lambda, \nu_{(m,i),e}\right] &= i λ_{ℓ_m} · \nu_{(m,i),e}, \\
[ν_\kappa, ν_\lambda] &= 0.
\end{align*}
\end{proposition}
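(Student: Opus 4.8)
The plan is to pin down every bracket by its lowest components and then invoke Theorem~\ref{lowdegzeroiszero} together with Theorem~\ref{embeddings}: an odd Hochschild class is determined by its curvature $\nu^0(1)\in Z(\Gtl_\cA)$, an even class by its first component $\nu^1$ viewed in $\mathrm{OutDer}(\Gtl_\cA)$, and any cocycle with $\nu^0=\nu^1=0$ vanishes in cohomology. So for each of the six identities I would first determine the parity of the output (the bracket has $\infty$-degree $0$, so it adds parities), then read off the relevant component ($\nu^0$ if the output is odd, $\nu^1$ if it is even) of the chain-level bracket from the defining formula, simplify, and recognize the predicted multiple of a basis class. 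The inputs I would use are the explicit low-degree data of the generators under [NL2]: since there are no orbigons of reduced length $\le 2$, one has $\nu_{(m,j),o}^0(1)=\ell_m^j$ and $\nu_{(m,j),o}^1=\nu_{(m,j),o}^2=0$; the arc class satisfies $\nu_\lambda^0=0$ and $\nu_\lambda^1(\alpha)=\lambda_\alpha\alpha$, extended as a derivation to $\nu_\lambda^1(\beta)=\lambda_\beta\beta$; and, being an even cocycle, $\nu_{(m,j),e}$ has $\nu^0=0$ and (Lemma~\ref{central-and-derivation}) $\nu_{(m,j),e}^1$ is a derivation, which by the cup-product description $\nu_{(m,j),e}=\nu_{(m,j),o}\smile\nu_a$ combined with $\nu_{(m,j),o}^1=0$ is supported, up to sign, on the unique angle arrow $\alpha_0$ at $m$ picked out by $a$, where $\alpha_0\mapsto \pm\ell_m^j\alpha_0$.

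For the two identities with odd output, $[\nu_{(m,i),e},\nu_{(n,j),o}]$ and $[\nu_\lambda,\nu_{(m,i),o}]$, I would evaluate the bracket on the empty input. From the formula $[\kappa,\nu]^0=\kappa^1(\nu^0(1))-(-1)^{\|\nu\|\|\kappa\|}\nu^1(\kappa^0(1))$; the first factor is even in both cases, so $\kappa^0=0$ and only $\kappa^1(\nu^0(1))$ survives, i.e.\ I must compute $\nu_{(m,i),e}^1(\ell_n^j)$ and $\nu_\lambda^1(\ell_m^i)$. Here I would use the shape of $\ell_m^r$ from Lemma~\ref{center}: it is the sum of the $\#\{\text{angle arrows at }m\}$ cyclic rotations of a single $r$-fold loop, in each of which every angle arrow at $m$ occurs exactly $r$ times. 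Applying the derivation and pulling the central factor $\ell_m^i$ out of each substitution yields $\nu_{(m,i),e}^1(\ell_n^j)=\pm\,\delta_{mn}\,j\,\ell_m^{i+j}$ --- the Kronecker delta because a derivation supported at $m$ kills loops around other punctures --- which up to sign is $\delta_{mn}\,j\,\nu_{(m,i+j),o}^0(1)$; similarly $\nu_\lambda^1(\ell_m^i)=i\,\lambda_{\ell_m}\,\ell_m^i$. This gives identities two and four.

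For the four identities with even output, $[\nu_{(m,i),o},\nu_{(n,j),o}]$, $[\nu_{(m,i),e},\nu_{(n,j),e}]$, $[\nu_\lambda,\nu_{(m,i),e}]$ and $[\nu_\kappa,\nu_\lambda]$, I would evaluate on a single angle arrow $\alpha$. Expanding the bracket, $[\kappa,\nu]^1(\alpha)$ is $\kappa^1(\nu^1(\alpha))-(-1)^{\|\kappa\|\|\nu\|}\nu^1(\kappa^1(\alpha))$ plus four terms of shape $\kappa^2(-,\nu^0(1))$, $\nu^2(-,\kappa^0(1))$, which vanish --- for the even--even brackets because $\kappa^0=\nu^0=0$, and for the odd--odd bracket because $\kappa^2=\nu^2=0$ under [NL2]. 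So only the composite $\kappa^1\nu^1$ minus the composite $\nu^1\kappa^1$ remains. For $[\nu_{(m,i),o},\nu_{(n,j),o}]$ it is $0$ because $\nu_{(m,\cdot),o}^1=0$, and $[\cdot]^0=0$ as well, so the class vanishes by Theorem~\ref{lowdegzeroiszero}. For $[\nu_\kappa,\nu_\lambda]$ both composites multiply $\alpha$ by $\kappa_\alpha\lambda_\alpha$ and cancel. For $[\nu_{(m,i),e},\nu_{(n,j),e}]$ --- choosing both $e$-classes with respect to one reference arc at $m$, which is legitimate since the class does not depend on that choice --- only $\alpha=\alpha_0$ contributes, and the two composites evaluate to $\pm(j+1)\ell_m^{i+j}\alpha_0$ and $\pm(i+1)\ell_m^{i+j}\alpha_0$ (the ``$+1$'' being the extra copy of $\alpha_0$ in the path $\ell_m^r\alpha_0$ beyond the $r$ from the windings), whose difference reads off $(j-i)\nu_{(m,i+j),e}^1(\alpha_0)$. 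For $[\nu_\lambda,\nu_{(m,i),e}]$, evaluating on $\alpha_0$ gives, up to sign, $(\lambda_{\ell_m^i\alpha_0}-\lambda_{\alpha_0})\ell_m^i\alpha_0=i\,\lambda_{\ell_m}\,\nu_{(m,i),e}^1(\alpha_0)$, the stray $\lambda_{\alpha_0}$ cancelling.

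The hard part will be the sign bookkeeping rather than the combinatorics. The bracket formula carries Koszul signs, the generators $\nu_{(m,j),e}$ are built from a cup product that itself carries the sign $\maltese$ and the convention $a\cdot b=(-1)^{\deg b}\mu^2(a,b)$, and one has to verify that all of these collapse to the clean integer coefficients stated (the paper already flags that it has not exhaustively checked the analogous Leibniz compatibility). A second point needing care is the justification that the chain-level $\nu_{(m,j),e}^1$ really is the derivation supported at $\alpha_0$: this rests on $\nu_{(m,j),o}^1=0$ under [NL2], so that no stray terms enter $\nu_{(m,j),o}\smile\nu_a$, and on the fact that any inner-derivation ambiguity or higher-component tail of $\nu_{(m,j),e}$ is invisible to the embedding of Theorem~\ref{embeddings}. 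As a cross-check one can re-derive identities three and five from identities two and four together with Proposition~\ref{calculatecup} via the graded Leibniz rule $[\nu,\eta\smile\omega]=[\nu,\eta]\smile\omega+(-1)^{\|\nu\|(\|\eta\|-1)}\eta\smile[\nu,\omega]$ applied to $\nu_{(\cdot),e}=\nu_{(\cdot),o}\smile\nu_a$.
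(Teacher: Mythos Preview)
Your approach is essentially the same as the paper's: compute the zeroth or first component of the chain-level bracket depending on the output parity, then invoke Theorem~\ref{embeddings} to identify the result with the claimed basis element. The paper's proof is in fact sketchier than yours (it does not spell out the derivation-on-$\ell_m^j$ calculation or the $(j+1)-(i+1)$ arithmetic), and it additionally argues the odd--odd identity without assuming [NL2], but the overall strategy is identical.
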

\begin{proof}
The strategy for this calculation is the same as for the cup product. We calculate the zeroth or first component of the bracket. 
\begin{align*}
[ν, η]^0 &= ν^1 (η^0) - (-1)^{‖ν‖ ‖η‖} η^1 (ν^0), \\
[ν, η]^1 (α) &= ν^1 (η^1 (α)) + (-1)^{‖η‖ ‖α‖} ν^2 (η^0, α) + ν^2 (α, η^0) \\
&~ - (-1)^{‖ν‖ ‖η‖} \big(η^1 (ν^1 (α)) + (-1)^{‖ν‖ ‖α‖} η^2 (ν^0, α) + η^2 (α, ν^0)\big).
\end{align*}
The main difference is now that the bracket has even parity, so we need to check the zeroth component whenever the two entries have different parity. In that case
the result is the outer derivation of the even class applied to the central element of the odd class. This takes care of the second and fourth identity.

The first component of the bracket of two even classes is the commutator of their outer derivations because they have no $\nu^0$. This takes care of the third and the last two identities. 

Finally the bracket of two odd classes is zero because they have no $\nu^1,\nu^2$ if [NL2] holds. If [NL2] fails and $\alpha$ is an indecomposable angle then $\nu^1(\alpha)$ can only be nonzero if $a=h(\alpha)$ is a loop around a puncture but then the arc collection does not split the surface. Secondly for our deformed products a term of the form $\rmu^2 (\ell_p, α)$ is always cancelled by $\rmu^2 (α,\ell_p)$ and therefore the same holds for the odd classes. This implies that $[ν, η]^1 (α)$ is zero for all indecomposable $\alpha$ and because $[ν, η]^1$ is a derivation it is identically zero. 
\end{proof}

The Lie-bracket on $\HH^\bullet(\Gtl_\cA)$ is part of an $L_\infty$-structure.
This is a graded vector space $L$ together with brackets $[,\dots,]^k : L^{\otimes k} \to L$ of degree $2-k$ which satisfy the $L_\infty$-axioms \cite{stasheff2019infinity}.

If $(K^\bullet,d,[])$ is a DGLA then we can construct an $L_\infty$-structure $[,]^\bullet$ on the homology such that $(HK^\bullet,0,[,]^\bullet)$ is $ L_∞ $-quasi-isomorphic to $(K^\bullet,d,[])$.

These products can be constructed using the homotopy transfer lemma (see e.g. \cite{markl2004transferring}). The construction can be summarized as follows:
\begin{itemize}
    \item Split $K^\bullet$ as a direct sum of graded vector spaces
    \[
    K^\bullet = H\oplus I \oplus R
    \]
    such that $\mathrm{Im} d=I$ and $\Ker d=H\oplus I$. In this way $H\cong HK^\bullet$ and $d$ restricts to an isomorphism $d_{IR}: R \to I$.
    \item
     Let $h$ be the map 
     \[
     h : H\oplus I \oplus R \to H\oplus I \oplus R : (u,v,w) \mapsto (0,0,d^{-1}_{R→I}(v)).
     \]
     In this way the projection onto $H$ becomes $\pi= \id -dh-hd$.
     \item for $x_1,\dots,x_k \in H$ we define
     \[
     [x_1,\dots,x_k]^k = \sum_t c_t \pi[ \dots,h[x_i,x_{j}],\dots, ]
     \]
     as a linear combination of all possible ways of fully bracketed expressions where each internal bracket is composed with an $h$. (The precise coefficients $c_t$ will not matter in our discussion.)
\end{itemize}
In our case we will take $K=\HC(\Gtl_{\cA})$, $H\cong \HH(\Gtl_{\cA})$ to be the span of the classes we constructed, and we take $I$ and $R$ to be compatible with the $G$-grading.

\begin{theorem}
If $\cA$ satisfies [NL2] then we can choose a split such that all higher brackets are zero.
\end{theorem}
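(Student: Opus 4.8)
The plan is to build the $ L_\infty $-structure on $ \HH^\bullet(\Gtl_{\cA}) $ by homotopy transfer from the DGLA $ (\HC^\bullet(\Gtl_{\cA}), d, [\,,\,]) $, to choose the splitting with some extra care, and then to force the higher brackets to vanish using the two gradings available — the $ G $-grading and a $ \Z $-lift of the parity — together with the negativity $ \sum_m \deg \ell_m = 2\chi(S,M) < 0 $ from Lemma~\ref{totaldegreebelowzero}. First I would record that [NL2] makes every arc a non-loop, hence $ |M| \geq 2 $, so that $ \iota(e_m) \neq 0 $ in $ G $ for every puncture $ m $, and also $ \chi(S,M) < 0 $. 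Fixing a $ \Z $-lift $ \deg $ turns $ \HC^\bullet(\Gtl_{\cA}) $ into a $ \Z $-graded DGLA; the basis classes of Theorem~\ref{th:paper1-hochschild} then carry bidegrees $ (\deg_G, \deg_\Z) $ equal to $ (0,-1) $ for $ \nu_{\id} $, $ (0,0) $ for an arc class $ \nu_\lambda $, $ (j\,\iota(e_m),\ j\deg \ell_m - 1) $ for $ \nu_{(m,j),o} $, and $ (j\,\iota(e_m),\ j\deg \ell_m) $ for $ \nu_{(m,j),e} $. I would then choose the transfer data $ \HC^\bullet = H \oplus I \oplus R $ with $ H $ the span of the basis classes, $ I = \Im d $, and $ R $ a $ G $-homogeneous complement contained in the subspace $ \{\nu : \nu^0 = 0\} $. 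This is possible because $ \mu^0 = \mu^1 = 0 $ (for $ r = 0 $ there are no orbigons of length $ \leq 2 $ under [NL2]), so $ (d\zeta)^0 = \mu^1(\zeta^0) = 0 $ forces $ I \subseteq \{\nu^0 = 0\} $, while $ \nu \mapsto \nu^0(1) $ restricts to an isomorphism $ H^{\odd} \isoto Z(\Gtl_{\cA}) $ by Theorem~\ref{embeddings} and Lemma~\ref{center}, so that $ H^{\odd} $ and $ \{\nu^0=0\}^{\odd} $ together span $ \HC^{\odd} $.

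The combinatorial heart is a counting lemma. For $ k \geq 3 $ the transferred bracket $ [x_1,\dots,x_k]^k $ lies in $ H $ and has $ G $-degree $ \sum_i \deg_G x_i $ and $ \Z $-degree $ \sum_i \deg_\Z x_i + (2-k) $, so it can be nonzero only if some basis class realizes this bidegree. Writing each input's $ G $-degree as $ j_i\,\iota(e_{m_i}) $ or $ 0 $, and using that $ \iota $ is injective on $ \Z^M/(1,\dots,1) $, I would split into two cases. Either every puncture carries at least one input of nontrivial $ G $-degree — and then the $ \Z $-degree contribution of those inputs contains the term $ c\cdot\sum_m \deg \ell_m = 2c\,\chi(S,M) $ with $ c \geq 1 $, so matching $ \Z $-degrees gives $ 2c\,\chi(S,M) + 2 = k + (\text{a nonnegative integer}) \geq k-1 \geq 2 $, contradicting $ 2c\,\chi(S,M) + 2 \leq 2\chi(S,M) + 2 \leq 0 $. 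Or all inputs of nontrivial $ G $-degree sit at a single puncture $ m $ (including the degenerate case where there are none); then a bookkeeping of $ \Z $-degrees forces $ k = 3 $, all three inputs even (arc classes or classes $ \nu_{(m,j),e} $), and the output odd — either $ \nu_{\id} $ when all three inputs are arc classes, or some $ \nu_{(m,l),o} $ otherwise. Hence every higher bracket vanishes except, possibly, these triple brackets of even inputs.

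It remains to kill those. The arc classes $ \nu_\lambda $ are represented by honest derivation cochains, i.e.\ cochains with only a first component, so $ [\nu_\lambda, \nu_\kappa]_{\mathrm{chain}} = 0 $ identically; consequently in every transfer tree computing $ [\nu_{\lambda_1},\nu_{\lambda_2},\nu_{\lambda_3}]^3 $ the inner bracket of two leaves vanishes and the whole bracket is $ 0 $. For a triple bracket $ [x_1,x_2,x_3]^3 $ with at least one input $ \nu_{(m,j),e} $ the output is odd, hence detected by its curvature component $ \nu^0 $ via the embedding of Theorem~\ref{embeddings}. Since $ \mu^0 = \mu^1 = 0 $ and $ R \subseteq \{\nu^0 = 0\} $, the projector $ \pi = \id - dh - hd $ does not change the curvature component, so $ ([x_1,x_2,x_3]^3)^0 = \sum_{\mathrm{trees}} c_t\,([\,h[x_a,x_b],x_c\,])^0 $; for each tree this equals $ (h[x_a,x_b])^1(x_c^0) \pm x_c^1\big((h[x_a,x_b])^0\big) $, which vanishes because $ x_c^0 = 0 $ ($ x_c $ is an even class) and $ h[x_a,x_b] \in R $ has no curvature. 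Thus $ ([x_1,x_2,x_3]^3)^0 = 0 $, so this bracket is zero in $ \HH $. With all higher brackets zero, the homotopy transfer lemma realizes $ (\HH^\bullet(\Gtl_{\cA}), 0, [\,,\,]^2) $ as $ L_\infty $-quasi-isomorphic to $ \HC^\bullet(\Gtl_{\cA}) $, which is the asserted formality.

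I expect the main obstacle to be the counting lemma: one must carefully track how the $ G $- and $ \Z $-bidegrees of the four families of basis classes combine under addition, handle the degenerate contributions of $ \nu_{\id} $ and the arc classes, and invoke $ \chi(S,M) < 0 $ at exactly the point where the cross-puncture terms must be balanced away. The choice of a $ G $-homogeneous complement $ R $ inside $ \{\nu^0 = 0\} $ and the curvature-component computation for the surviving triple brackets are then routine, once one has verified that the transferred operations are indeed bigraded of the expected degrees.
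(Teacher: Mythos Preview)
Your overall strategy --- homotopy transfer, bidegree counting via the $G$- and $\Z$-gradings together with Lemma~\ref{totaldegreebelowzero} to isolate a possible even--even--even triple bracket, then a curvature-component check --- is exactly the paper's. The gap lies in the splitting. You claim one can choose $R \subset \{\nu : \nu^0 = 0\}$, arguing that $H^{\odd}$ and $\{\nu^0=0\}^{\odd}$ together span $\HC^{\odd}$. This is false. The map $\nu \mapsto \nu^0(1)$ on $\HC^{\odd}$ surjects onto the space of \emph{cyclic} even elements $\bigoplus_a \id_a (\Gtl_{\cA})^{\even} \id_a$, which under [NL2] is spanned by the individual summands $\id_a$ and $\ell_m^j\,\id_a$ and is strictly larger than $Z(\Gtl_{\cA})$. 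Since every cocycle has central $\nu^0$ (Lemma~\ref{central-and-derivation}), one gets
\[
\Ker d^{\odd} + \{\nu^0 = 0\}^{\odd} \;\subset\; \{\nu : \nu^0(1) \in Z(\Gtl_{\cA})\}^{\odd} \;\subsetneq\; \HC^{\odd},
\]
so no complement $R^{\odd}$ of $\Ker d^{\odd}$ can lie inside $\{\nu^0 = 0\}$. Your final curvature computation for the triple bracket therefore rests on an impossible hypothesis.

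The paper repairs this locally rather than globally. It splits $R^{\odd} = R_1 \oplus R_2$ with $R_1$ concentrated entirely in component zero and satisfying $R_1 \cap Z(\Gtl_{\cA}) = 0$, and $R_2 \subset \{\nu^0 = 0\}$. It does not assert $(h\xi)^0 = 0$ for arbitrary $\xi$; instead, for each chain-level bracket $[x,y]$ of even basis classes it checks (using the explicit gauges of Section~\ref{reductiontozeroandfirstcomponent}) that $[x,y]$ equals its $H$-projection plus $d\kappa$ for some $\kappa$ with $\kappa^0 = 0$. Decomposing such a $\kappa$ along $H^{\odd} \oplus I^{\odd} \oplus R_1 \oplus R_2$ and invoking $R_1 \cap Z = 0$ forces the $H$- and $R_1$-components to vanish, so $h[x,y] \in R_2$ and in particular $(h[x,y])^0 = 0$; your last line $[h[x,y],z]^0 = 0$ then goes through. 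The missing ingredient in your argument is precisely this per-bracket verification that the homotopy lands in $R_2$.
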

\begin{proof}
First note that $[\nu_{\id},-]=0$ in $\underline{\HC}(\Gtl_\cA)$, so $[\nu_{\id},-,\dots,-]^k$ will also be zero. 

To show that the other brackets are zero, pick any $\Z$-grading of the gentle algebra. Then we can assume that that the bracket $[,]^k$ has degree $2-k$ for this grading and degree $0$ for the $G=\H(S\setminus M,\cA,\Z)$-grading. As we assumed that $\cA$ has no loops, there are at least two punctures so the $G$-degree of $\ell_m$ is nontrivial for each marked point.

If all the entries of the bracket have $G$-degree $0$ (which means that they are all arc classes, which have $\Z$-degree $0$), then the homotopy transfer lemma
tells us that the product must be zero because all pairs $[\nu_\lambda,\nu_\kappa]$ are already zero in $\HC(\Gtl_\cA)$.

Suppose that there is at least one $\nu_{p,e}$ in one entry.
We make the following distinctions:
\begin{itemize}
    \item All the entries have $G$-degree a multiple of $\deg_G \ell_m$.
In that case the $G$-degree of the result will be $r\deg_G \ell_m$ for some $r \in \N$ and therefore the $\Z$-degree must be either $r\deg_{\Z}\ell_m$ if the result is even or $r\deg_{\Z}\ell_m-1$ if the result is odd. The total $\Z$-degree of the entries is at most $r\deg_{\Z}\ell_m$ and this only happens if all entries are all even. Because the product has degree $2-k$ it can only be nonzero for degree reasons if $2-k\ge-1$, or in other words if $k=2,3$.

\item Some entries have degrees that turn around different punctures.
If the result has $G$-degree $\deg_G \ell_m^j$ then the $G$-degree of the entries must also be $\deg_G \ell_m^j$ but if there are degrees for more than one marked point all marked points must appear because the only relation between the degrees of the $\ell_m$ is that $\sum_{m \in M}\deg_G\ell_m=0$. On the other hand by lemma \ref{totaldegreebelowzero} we have that $\sum_m\deg_\Z \ell_m\le -1$, so the $\Z$-degree of the result must be $\le \deg_\Z \ell_m^j +2 -k -1$. This is impossible if $k\ge 3$.
\end{itemize}

From the discussions above we see that because of degree reasons the only nontrivial higher brackets are of the form
\[
[\nu_{(m,i_1),e},\nu_{(m,i_2),e},\nu_{(m,i_3),e}]= \lambda\nu_{(m,i_1+i_2+i_3),o}
\]
or cases where one or more  $\nu_{(m,i_j),e}$ are $\nu_\lambda$s.
To show that these products are all zero, we have to look at the homotopy transfer construction in detail.
The triple product can be written down in terms of expressions of the form
\[
\pi[h[\nu_{(m,i_1),e},\nu_{(m,i_2),e}],\nu_{(m,i_3),e}]
\]
where $\pi$ is the projection onto homology.
We will now argue why such terms are zero.

From our computation in \ref{calculatebracket} we know that
\begin{equation*}\begin{split}
[\nu_{(m,i_1),e},\nu_{(m,i_2),e}]^0 &= 0\\
[\nu_{(m,i_1),e},\nu_{(m,i_2),e}]^1 &= ((i_2-i_1)\nu_{(m,i_1+i_1),e})^1
\end{split}\end{equation*}
So $[\nu_{(m,i_1),e},\nu_{(m,i_2),e}]= (i_2-i_1)\nu_{(m,i_1+i_2),e} + d\kappa$
with $(d\kappa)^{\le 1} = 0 $. From the construction in section \ref{reductiontozeroandfirstcomponent}, we can choose $\kappa$ such that $\kappa^0=0$.
For what follows, choose $ R^{\odd} = R_1 ⊕ R_2 $ (in a $ G $-graded way) such that $ R_1^{≠0} = R_2^0 = 0 $ and $ R_1 ∩ Z(\Gtl_{\cA}) = 0 $. With respect to the direct sum $ \HC^{\odd} = H^{\odd} ⊕ I^{\odd} ⊕ R_1 ⊕ R_2 $ write $ κ = h + i + r_1 + r_2 $. Then $ 0 = κ^0 = h^0 + r_1^0 $, hence $ r_1 = r_1^0 = - h^0 ∈ R_1 ∩ Z(\Gtl_{\cA}) = 0 $ and we conclude $ κ = i + r_2 $. Simply subtracting $ i $ from $ κ $ keeps $ κ^0 = 0 $ and brings $ κ $ into $ R_2 ⊂ R $. Finally
\[
[h[\nu_{(m,i_1),e},\nu_{(m,i_2),e}],\nu_{(m,i_3),e}]^0=[\kappa,\nu_{(m,i_3),e}]^0 = \nu_{(m,i_3),e}(\kappa^0) \pm \kappa^1(\nu_{(m,i_3),e}^0)=0
\]
Therefore all contributions are zero.
\end{proof}

\begin{corollary}\label{formality}
If $\cA$ has no loops or two-cycles then $\HC(\Gtl_\cA),d,[,]$ is formal. In other words there is an $L_\infty$-quasi-isomorphism between $\HC(\Gtl_\cA),d,[,]$ and $\HH(\Gtl_\cA),0,[,]$.
\end{corollary}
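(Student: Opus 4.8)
The plan is to deduce this directly from the preceding theorem together with the homotopy transfer lemma, so the argument is short and structural rather than computational. Recall that for any DGLA $(K^\bullet, d, [,])$ the homotopy transfer construction (as recalled in the paragraphs above, following \cite{markl2004transferring}) produces an $L_\infty$-structure $[,]^\bullet$ on the cohomology $HK^\bullet$, with $[,]^2$ equal to the bracket induced in cohomology, such that $(HK^\bullet, 0, [,]^\bullet)$ is $L_\infty$-quasi-isomorphic to $(K^\bullet, d, [,])$. Applying this to $K = \HC(\Gtl_\cA)$ with $H \cong \HH(\Gtl_\cA)$ the span of the basis classes constructed in \autoref{th:paper1-hochschild}, and choosing the splitting $G$-compatibly as in the setup, we obtain an $L_\infty$-quasi-isomorphism between $(\HH(\Gtl_\cA), 0, [,]^\bullet)$ and $(\HC(\Gtl_\cA), d, [,])$.

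The second ingredient is the previous theorem: under the assumption [NL2] (which is exactly the hypothesis ``no loops or two-cycles'') the splitting can be chosen so that every higher bracket $[,\dots,]^k$ with $k \ge 3$ vanishes. Hence the transferred $L_\infty$-structure reduces to just the binary bracket $[,]^2$, which is the Gerstenhaber bracket $[,]$ on $\HH(\Gtl_\cA)$ computed in \autoref{calculatebracket}. In other words $(\HH(\Gtl_\cA), 0, [,]^\bullet) = (\HH(\Gtl_\cA), 0, [,])$ as $L_\infty$-algebras. Combining this with the quasi-isomorphism from the homotopy transfer lemma yields an $L_\infty$-quasi-isomorphism between $(\HC(\Gtl_\cA), d, [,])$ and $(\HH(\Gtl_\cA), 0, [,])$, which is precisely the statement that the DGLA is formal.

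There is essentially no obstacle left here, since all the work is done in the theorem that precedes the corollary; the only point to be careful about is that the splitting required by the theorem is the same one feeding into the homotopy transfer lemma, so that the two statements genuinely compose. This is automatic because the theorem's proof already constructs its splitting as a refinement of a $G$-graded splitting of the type used in the transfer construction.
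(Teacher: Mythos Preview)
Your proposal is correct and matches the paper's intended argument: the corollary is stated without a separate proof precisely because it follows immediately from the preceding theorem together with the homotopy transfer setup described just before it. Your write-up simply makes explicit the two-line deduction the paper leaves implicit.
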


\subsection{Classifying curved deformations}
As an application of our computations we will show that every curved deformation of $\Gtl_\cA$ is equivalent to one of the $\rmu$. Remember that if $(A,\mu)$ is an $A_\infty$-algebra over $\k$ and $R$ is a complete local Noetherian unital commutative $\C$-algebra with maximal ideal $R^+$ and residue field $ R/R^+ = ℂ $ then a curved deformation is an odd element $\nu \in \Hom(\bigoplus_i A^{\otimes_\k^i},A)\htensor R^+$ such that $\mu + \nu$ satisfies the curved $A_\infty$-axioms. As we indicated before
this equation is equivalent to the Maurer-Cartan equation for the Hochschild cochain complex together with the Gerstenhaber bracket \cite{kontsevich2002deformation}.

\newcommand{\Art}{\operatorname{Art}}
\newcommand{\Set}{\operatorname{Set}}

\begin{theorem}
\label{th:paper1-classification}
If [NL2] holds, then every curved $A_\infty$-deformation of $ \Gtl_{\cA} $ over $ R $ is equivalent to one of the $ \rmu $.
\end{theorem}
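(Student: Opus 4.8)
The plan is to combine the standard deformation-theoretic dictionary with the formality statement of Corollary~\ref{formality}. Curved $A_\infty$-deformations of $(\Gtl_\cA,\mu)$ over $R$ are classified, up to equivalence, by the Maurer--Cartan moduli $\MCb\big(\HC(\Gtl_\cA)\htensor R^+\big)$ of the DGLA $\HC(\Gtl_\cA)$ base-changed to $R^+$, the deformation $\rmu$ corresponding to the Maurer--Cartan element $\rmu-\mu$. By Corollary~\ref{formality} there is an $L_\infty$-quasi-isomorphism between $(\HC(\Gtl_\cA),d,[,])$ and $(\HH(\Gtl_\cA),0,[,])$, and since $R$ is complete local this induces a bijection $\MCb\big(\HC(\Gtl_\cA)\htensor R^+\big)\cong\MCb\big(\HH(\Gtl_\cA)\htensor R^+\big)$. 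Thus the theorem reduces to showing that the classes $[\rmu-\mu]$ exhaust the right-hand side.

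First I would identify the Maurer--Cartan elements on the cohomology side. By Theorem~\ref{th:paper1-hochschild}(1) the space $\HH^{\odd}(\Gtl_\cA)$ is spanned by $\nu_\id$ and the $\nu_{(m,j),o}$, and by Proposition~\ref{calculatebracket} together with the identity $[\nu_\id,-]=0$ all Gerstenhaber brackets among these generators vanish. As the transferred $L_\infty$-structure on $\HH(\Gtl_\cA)$ has zero differential and no higher brackets, every odd element is automatically a Maurer--Cartan element, so $\MC\big(\HH(\Gtl_\cA)\htensor R^+\big)=\HH^{\odd}(\Gtl_\cA)\htensor R^+$, a completed free $R^+$-module on $\nu_\id$ and the $\nu_{(m,j),o}$.

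The link with the $\rmu$-family is then as follows. Given $\eta=r_0\nu_\id+\sum_{p=(m,j)}r_p\,\nu_{(m,j),o}$, set $r=r_0+\sum_m\sum_j r_{(m,j)}t^j\in Z(\Gtl_\cA)\htensor R^+$. By Proposition~\ref{satisfies-curved-axioms}, $\rmu-\mu$ is a Maurer--Cartan element of $\HC(\Gtl_\cA)\htensor R^+$; using [NL2] one has $(\rmu-\mu)^1=0$, while $(\rmu-\mu)^0(1)=r_0\cdot 1+\sum_p r_p\ell_p$. Modulo $(R^+)^2$ the bracket term in the Maurer--Cartan equation drops out, so there $\rmu-\mu$ is a cocycle, and under the embedding $\HH^{\odd}(\Gtl_\cA)\hookrightarrow Z(\Gtl_\cA)$, $\nu\mapsto\nu^0(1)$ of Theorem~\ref{embeddings}, its class is exactly $\eta\bmod (R^+)^2$. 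Hence the formality bijection carries $[\rmu-\mu]$ to $\eta$ at first order.

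The step I expect to be the crux is upgrading this first-order match to an honest gauge equivalence, by induction on the $R^+$-adic order. Suppose $\rmu-\mu$ has been gauged to agree modulo $(R^+)^n$ with a Maurer--Cartan representative of the image of $\eta$ under the $L_\infty$-morphism $\HH\to\HC$; the leftover difference is then an odd cocycle modulo $(R^+)^{n+1}$ whose class lies in $\HH^{\odd}(\Gtl_\cA)\otimes (R^+)^n/(R^+)^{n+1}$. By the computation just made, altering the coefficients $r_0,r_{(m,j)}$ by elements of $(R^+)^n$ shifts this class by an arbitrary element of the same space, so it can be cancelled, after which the residual coboundary is removed by an infinitesimal gauge inside the normalized complex. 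Theorem~\ref{lowdegzeroiszero} is precisely what guarantees that only the $0$th and $1$st components can obstruct, so the bookkeeping closes up; completeness of $R$ lets the successive choices of $r$ and gauges converge. In the limit one obtains $r\in Z(\Gtl_\cA)\htensor R^+$ with $\rmu$ equivalent to the given deformation, which is the assertion. The routine-but-delicate points are making this induction precise and checking that all gauges stay in the normalized complex; everything they need has already been assembled in the preceding sections.
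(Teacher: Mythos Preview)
Your argument is correct and reaches the same conclusion, but it is organized differently from the paper's proof and, as written, mixes two strategies somewhat redundantly.

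The paper does not invoke Corollary~\ref{formality} at all. Instead it packages the order-by-order lifting as a smoothness statement for a morphism of deformation functors in the sense of Manetti: it sets up $G(R)=\exp(\HC^{\even}\htensor R^+)$, $F(R)=Z(\Gtl_\cA)\htensor R^+$, and $\Phi(g,r)=g.\rmu$, checks that $\Phi(\C[\varepsilon])$ is surjective (this is exactly Theorem~\ref{th:paper1-hochschild}(1)), and then uses the standard smoothness criterion with trivial obstruction theory on the smooth source $G\times F$ to conclude $\Phi(R)$ is surjective for all Artinian $R$; the passage to complete local $R$ is then a straightforward inverse-limit argument using smoothness again to choose compatible lifts.

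Your route instead uses formality to transport the problem to $\MCb(\HH\htensor R^+)$, where (since odd--odd brackets and higher brackets vanish) the Maurer--Cartan set is all of $\HH^{\odd}\htensor R^+$. This is a perfectly good alternative and is arguably the more natural use of the preceding section. However, your ``crux'' paragraph then reverts to an explicit order-by-order induction on the cochain side, which is exactly the content of Manetti's smoothness argument; at that point the formality bijection is no longer doing real work. If you want to exploit formality fully, a cleaner finish is: the map $r\mapsto[\rmu-\mu]$ between the functors $F$ and $\MCb(\HC)\cong\MCb(\HH)$ is an isomorphism on tangent spaces (your first-order computation), both sides are smooth (the target because $\MC(\HH)=\HH^{\odd}$ is a vector space), hence the map is surjective for all $R$. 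Either way the substance is the same first-order surjectivity plus unobstructedness; the two proofs differ mainly in bookkeeping language.
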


\begin{proof}
We give a proof in terms of deformation functors, in the language of \cite{manetti2005deformation}. In short, we interpret our explicit class of deformations $ \rmu $ as a functor of Artin rings. Gauging by even elements acts on the values of this functor and lands in Maurer-Cartan elements. The first part of the proof deals with the case of $ R $ being Artinian. In the second part of the proof, we pass to the non-Artinian case.

Let $ \Art $ denote the category of Artinian local Noetherian unital commutative rings over $ ℂ $ with residue field $ ℂ $, with morphisms being local ($ φ(R^+) ⊂ S^+ $) and unital ($ φ(1_R) = 1_S $). We build three functors $ G, F, \MC: \Art → \Set $. The functor $ G $ is the gauge group functor, $ F $ is the functor of our deformation parameters $ r $, and $ \MC $ is the standard Maurer-Cartan functor. More precisely, define
\begin{align*}
G(R) &≔ \exp(\HC^{\even} (\Gtl_{\cA}) \htensor R^+), \\
F(R) &≔ Z(\Gtl_{\cA}) \htensor R^+, \\
\MC(R) &≔ \MC(\HC(\Gtl_{\cA}), R).
\end{align*}
All three assignments come with natural restriction maps $ G(R) → G(S) $, $ F(R) → F(S) $ and $ \MC(R) → \MC(S) $ for every morphism $ φ: R → S $ in $ \Art $. It is standard to check that all three are deformation functors in the sense of \cite[Definition 2.5]{manetti2005deformation}. In fact, $ G $ and $ F $ as well as their product functor $ G × F $ are smooth (unobstructed) in the sense of \cite[Definition 2.8]{manetti2005deformation}. Regard now the morphism of functors given by
\begin{equation*}
Φ(R): G(R) × F(R) → \MC(R), \\
(g, r) ↦ g.~\rmu.
\end{equation*}
Let us define obstruction theories $ O_{G × F} $ and $ O_{\MC} $ for $ G × F $ and $ \MC $ by defining both $ O_{G × F} ≔ 0 $ and $ O_{\MC} ≔ 0 $ to be trivial. We will now show that $ Φ $ is smooth by applying \cite[Proposition 2.17]{manetti2005deformation}. In the terminology of that paper, we have to check three items: (1) $ Φ(ℂ[ε]) $ is surjective where $ ℂ[ε] = ℂ[X] / (X^2) $ is the ring of dual numbers, (2) the obstruction theory $ O_{G × F} = 0 $ is complete, (3) the morphism between obstruction theories $ O_{G × F} → O_{\MC} $ is injective and compatible with $ Φ $.

We check the three conditions. (1) Let $ εc ∈ \MC(ℂ[ε]) $. Then $ c $ is a Hochschild cocycle and can be gauged by some $ g ∈ G(ℂ[ε]) $ to be equal to an $ \rmu $ for some $ r ∈ ε Z(\Gtl_{\cA}) $. In other words, we have $ Φ(ℂ[ε])(g, r) = εc $. This proves $ Φ(ℂ[ε]) $ surjective. Item (2) and item (3) are trivial because $ G × F $ is smooth.

The standard smoothness criterion \cite[Proposition 2.17]{manetti2005deformation} implies that $ Φ $ is smooth. Putting $ S = ℂ $ in the definition of smoothness implies that $ Φ(R): G(R) × F(R) → \MC(R) $ is surjective for every $ R ∈ \Art $. In other words, every deformation of $ \Gtl_{\cA} $ over $ R ∈ \Art $ is equivalent to an $ \rmu $ by gauge equivalence.

In the second part of the proof, we generalize to the non-Artinian case. Let $ R $ be a complete local Noetherian unital commutative $ ℂ $-algebra with residue field $ ℂ $. Let $ μ ∈ \MC(\HC(\Gtl_{\cA}), R) $ be a deformation over $ R $, meaning a Maurer-Cartan element in the completed tensor product $ μ ∈ \HC(\Gtl_{\cA}) \htensor R^+ $. Our aim is to show that $ μ $ is gauge equivalent to some $ \rmu $ with $ r ∈ Z(\Gtl_{\cA}) \htensor R^+ $.

Our strategy is to truncate $ μ $ to $ R/(R^+)^i $ for every $ i $ and use the first part of the proof to construct an element $ r_i $ and a gauge $ g_i $. We use smoothness of $ Φ $ to force both sequences $ (r_i) $ and $ (g_i) $ to converge.

Put $ μ_i ≔ π_i (μ) ∈ \MC(R / (R^+)^i) $. We shall construct sequences $ r_i ∈ F(R / (R^+)^i) $ and $ g_i ∈ G(R / (R^+)^i) $ such that (1) $ Φ(R / (R^+)^i)(g_i, r_i) = μ_i $, (2) $ π_i (r_{i+1}) = r_i $ and (3) $ π_i (g_{i+1}) = g_i $ for all $ i ∈ ℕ $. For the induction base $ i = 1 $, let $ g_1 ≔ 1 ∈ G(ℂ) $ and $ r_1 ≔ 0 ∈ F(ℂ) $. Since $ Φ(ℂ)(g_1, r_1) = 0 = μ_1 ∈ \MC(ℂ) $, the three conditions are satisfied at $ i = 1 $.

For induction hypothesis, assume the sequences have already been constructed until index $ i $. Since $ Φ $ is smooth, we have a surjection
\begin{equation*}
G(R / (R^+)^{i+1}) × F(R / (R^+)^{i+1}) \twoheadrightarrow (G(R / (R^+)^i) × F(R / (R^+)^i)) ×_{\MC(R / (R^+)^i)} \MC(R / (R^+)^{i+1}).
\end{equation*}
Pick $ (g_i, r_i, μ_{i+1}) $ on the right hand side. Indeed, $ Φ(R/(R^+)^i) (g_i, r_i) = μ_i = π_i (μ_{i+1}) $ by assumption and construction. By surjectivity there is a lift $ (g_{i+1}, r_{i+1}) $ such that (1) $ Φ(R / (R^+)^{i+1})(g_{i+1}, r_{i+1}) = μ^{i+1} $ and (2) $ π_i (r_{i+1}) = r_i $ and (3) $ π_i (g_{i+1}) = g_i $. This finishes the induction step.

Finally, we have constructed the desired sequences $ (r_i) $ and $ (g_i) $. Since $ π_i (r_{i+1}) = r_i $, the sequence $ r_i $ converges to some $ r ∈ Z(\Gtl_{\cA}) \htensor R^+ $ and $ g_i $ converges to some $ g ∈ \exp(\HC^{\even} (\Gtl_{\cA}) \htensor R^+) $. Within $ \MC(R/(R^+)^i) $ we have
\begin{equation*}
π_i (g.\rmu) = Φ(R/(R^+)^i)(g_i, r_i) = μ_i = π_i (μ), \quad ∀i ∈ ℕ.
\end{equation*}
Passing to the limit gives that $ g.\rmu = μ $ within $ \MC(R) $. In other words, $ μ $ is gauge equivalent to $ \rmu $.
\end{proof}

\begin{remark}
In remark \ref{Koszul1} we extended the notion of orbigons to allow weights on the faces and the marked points. This allows us to construct curved deformations ${}^{r,s}\bbmu$ of the (completed) gentle algebra without its $A_\infty$-structure coming from the Wrapped Fukaya category. It is also possible to show that every deformation of the completed gentle algebra is equivalent to one of these forms. This nicely fits into the framework of Koszul duality because Koszul dual $A_\infty$-algebras have the same deformation theory. 
\end{remark}

\printbibliography

@article{Bocklandt,
author={Bocklandt, Raf},
title={Noncommutative mirror symmetry for punctured surfaces},
note={With an appendix by Mohammed Abouzaid},
journal={Trans. Amer. Math. Soc.},
volume={368},
year={2016},
number={1},
pages={429--469}}

@article{HKK,
author={Haiden, F. and Katzarkov, L. and Kontsevich, M.},
title={Flat surfaces and stability structures},
journal={Publ. Math. Inst. Hautes \'{E}tudes Sci.},
volume={126},
year={2017},
pages={247--318}}

@article{Assem,
author={Assem, Ibrahim and Br\"{u}stle, Thomas and Charbonneau-Jodoin,
              Gabrielle and Plamondon, Pierre-Guy},
title={Gentle algebras arising from surface triangulations},
journal={Algebra Number Theory},
volume={4},
year={2010},
number={2},
pages={201--229}}

@article{kontsevich2002deformation,
  title={Deformation theory},
  author={Kontsevich, Maxim and Soibelman, Yan},
  journal={Livre en pr{\'e}paration},
  year={2002}
}

@article{manetti2005deformation,
  title={Deformation theory via differential graded Lie algebras},
  author={Manetti, Marco},
  journal={arXiv preprint math/0507284},
  year={2005}
}

@article{keller2006infinity,
  title={A-infinity algebras, modules and functor categories},
  author={Keller, Bernhard},
  journal={Contemporary Mathematics},
  volume={406},
  pages={67--94},
  year={2006},
  publisher={Providence, RI: American Mathematical Society}
}

@article{markl2004transferring,
  title={Transferring $A_\infty$ (strongly homotopy associative) structures},
  author={Markl, Martin},
  journal={arXiv preprint math/0401007},
  year={2004}
}

@article{stasheff2019infinity,
  title={L-infinity and A-infinity structures: then and now},
  author={Stasheff, Jim},
  journal={Higher Structures},
  volume={3},
  number={1},
  year={2019}
}

@article{seidel2008homological,
  title={Homological mirror symmetry for the genus two curve},
  author={Seidel, Paul},
  journal={arXiv preprint arXiv:0812.1171},
  year={2008}
}

@article{efimov2009homological,
  title={Homological mirror symmetry for curves of higher genus},
  author={Efimov, Alexander I},
  journal={arXiv preprint arXiv:0907.3903},
  year={2009}
}

@article{abouzaid2013homological,
  title={Homological mirror symmetry for punctured spheres},
  author={Abouzaid, Mohammed and Auroux, Denis and Efimov, Alexander and Katzarkov, Ludmil and Orlov, Dmitri},
  journal={Journal of the American Mathematical Society},
  volume={26},
  number={4},
  pages={1051--1083},
  year={2013}
}

@article{lekili2020derived,
  title={Derived equivalences of gentle algebras via Fukaya categories},
  author={Lekili, Yank{\i} and Polishchuk, Alexander},
  journal={Mathematische Annalen},
  volume={376},
  number={1},
  pages={187--225},
  year={2020},
  publisher={Springer}
}

@article{opper2018geometric,
  title={A geometric model for the derived category of gentle algebras},
  author={Opper, Sebastian and Plamondon, Pierre-Guy and Schroll, Sibylle},
  journal={arXiv preprint arXiv:1801.09659},
  year={2018}
}

@article{wong2021dimer,
  title={Dimer models and Hochschild cohomology},
  author={Wong, Michael},
  journal={Journal of Algebra},
  volume={585},
  pages={207--279},
  year={2021},
  publisher={Elsevier}
}

@article{bardzell1997alternating,
  title={The alternating syzygy behavior of monomial algebras},
  author={Bardzell, Michael J},
  journal={Journal of Algebra},
  volume={188},
  number={1},
  pages={69--89},
  year={1997},
  publisher={Elsevier}
}

@article{gerstenhaber1963cohomology,
  title={The cohomology structure of an associative ring},
  author={Gerstenhaber, Murray},
  journal={Annals of Mathematics},
  pages={267--288},
  year={1963},
  publisher={JSTOR}
}

@article{mescher2016primer,
  title={A primer on A-infinity-algebras and their Hochschild homology},
  author={Mescher, Stephan},
  journal={arXiv preprint arXiv:1601.03963},
  year={2016}
}

@book {Ganatra,
    AUTHOR = {Ganatra, Sheel},
     TITLE = {Symplectic {C}ohomology and {D}uality for the {W}rapped
              {F}ukaya {C}ategory},
      NOTE = {Thesis (Ph.D.)--Massachusetts Institute of Technology},
 PUBLISHER = {ProQuest LLC, Ann Arbor, MI},
      YEAR = {2012},
     PAGES = {(no paging)},
   MRCLASS = {Thesis},
}

@article{Ritter-Smith,
Author = {Alexander F. Ritter and Ivan Smith},
Title = {The monotone wrapped Fukaya category and the open-closed string map},
Year = {2012},
Eprint = {arXiv:1201.5880},
Doi = {10.1007/s00029-016-0255-9},
}

@article {BGS,
    AUTHOR = {Beilinson, Alexander and Ginzburg, Victor and Soergel,
              Wolfgang},
     TITLE = {Koszul duality patterns in representation theory},
   JOURNAL = {J. Amer. Math. Soc.},
  FJOURNAL = {Journal of the American Mathematical Society},
    VOLUME = {9},
      YEAR = {1996},
    NUMBER = {2},
     PAGES = {473--527},
      ISSN = {0894-0347},
}

\end{document}